\let\oldtocsection=\tocsection
\let\oldtocsubsection=\tocsubsection
\let\oldtocsubsubsection=\tocsubsubsection
\renewcommand{\tocsection}[2]{\hspace{0em}\oldtocsection{#1}{#2}}
\renewcommand{\tocsubsection}[2]{\hspace{1.75em}\oldtocsubsection{#1}{#2}}
\renewcommand{\tocsubsubsection}[2]{\hspace{2em}\oldtocsubsubsection{#1}{#2}}
\renewcommand{\paragraph}{%
\@startsection {paragraph}{4}
{\z@} \z@ {-\fontdimen 2\font }\bfseries
}
\newif \iffig
\newif\ifdebug
\definecolor{darkred}{rgb}{0.4,0,0}
\definecolor{darkgreen}{rgb}{0,0.5,0}
\definecolor{darkblue}{rgb}{0,0,0.4}
\def\subsek~{\S{}}
\numberwithin{equation}{section}
\newtheoremstyle{mytheoremstyle} 
    {5pt}                    
    {5pt}                    
    {\itshape}                   
    {}                           
    {\bfseries}                   
    {}                          
    {.5em}                       
    {}  
\theoremstyle{mytheoremstyle}
\newtheorem{theorem}{Theorem}[section]
\newaliascnt{lemma}{theorem}  
\newtheorem{lemma}[lemma]{Lemma}  
\newaliascnt{proposition}{theorem}
\newtheorem{proposition}[proposition]{Proposition}
\newaliascnt{corollary}{theorem}  
\newtheorem{corollary}[corollary]{Corollary}  
\newaliascnt{exercise}{theorem}  
\newaliascnt{definition}{theorem}  
\newaliascnt{remark}{theorem}  
\newtheorem{remark}[remark]{Remark}  
\newaliascnt{example}{theorem}  
\newtheorem{example}[example]{Example}  
\newaliascnt{question}{theorem}  
\newtheorem{question}[question]{Question}  
\def\equationautorefname#1#2\null{%
  Eq.#1(#2\null)%
}
\@date \else {\vskip3ex \centering\footnotesize\@date\par\vskip1ex}\fi
\else \@footnotetext{\@setdate}\fi}
\begin{document}

\title[Zero exponents and monodromy of the KZ cocycle]{Zero Lyapunov exponents and monodromy of the Kontsevich-Zorich cocycle}

%

\thanks{Revised \textsc{\today}}

\date{October 2014}

\author{\vspace{-0.5em}Simion Filip\vspace{-0.5em}}

 \address{
 \parbox{0.5\textwidth}{
 Department of Mathematics\\
 University of Chicago\\
 Chicago IL, 60615\\}
 	}
 \email{{sfilip@math.uchicago.edu}}

\begin{abstract}
We describe all the situations in which the Kontse\-vich-Zorich cocycle has zero Lyapunov exponents.
Confirming a conjecture of Forni, Matheus, and Zorich, this only occurs when the cocycle satisfies additional geometric constraints.
We also describe the real Lie groups which can appear in the monodromy of the Kontsevich-Zorich cocycle.
The number of zero exponents is then as small as possible, given its monodromy.
\vspace{-1em}
\end{abstract}

\maketitle

\vspace{-1em}
 \tableofcontents

\ifdebug
  \listoffixmes
\fi

\section{Introduction}

Consider a genus $g$ surface equipped with a foliation (necessarily with singular points if $g>1$).
Take a typical leaf of the foliation of some large length $T$.
Closing it up and considering its homology class, it grows linearly in $T$.
In the 90s, Zorich (see \cite{Zorich_leaves}) discovered that exact lower-order asymptotics exist, with $g$ terms of orders $T^{\lambda_i}$ for some $1=\lambda_1\geq\cdots \geq \lambda_g$.
The numbers $\{\lambda_i\}$ are, in fact, the Lyapunov exponents of a different dynamical system, on the moduli space of Riemann surfaces.
They measure the growth rate of sections of a vector bundle, transported along orbits of the \Teichmuller geodesic~flow.

\paragraph{Flat surfaces}
Let $(X,\omega)$ be a pair consisting of a Riemann surface with a holomophic $1$-form.
A moduli space of all such objects of fixed topological type is called a stratum, denoted $\cH(\kappa)$, where $\kappa$ encodes the multiplicities of zeros.
The surveys of Masur-Tabachnikov \cite{MT}, Forni-Matheus \cite{ForniMatheus_Lectures}, or Zorich \cite{Zorich}, can serve as an introduction to the subject.

The group $\SL_2\bR$ acts on this space and preserves a natural measure of Lebesgue class.
It is finite by work of Masur and Veech \cite{Masur,Veech}.

By recent work of Eskin and Mirzakhani \cite{EM} all other ergodic invariant measures are of Lebesgue class, supported on manifolds (in fact, algebraic varieties \cite{sfilip_algebraicity}).
Further results concerning orbit closures and equidistribution, in analogy with Ratner's theorems, are developed by Eskin, Mirzakhani and Mohammadi \cite{EMM}.

\paragraph{Lyapunov exponents}
For each invariant measure, one considers the Kontsevich-Zorich cocycle and its Lyapunov exponents (see \cite{Forni_spectrum}).
Their study was initiated by the work of Zorich \cite{Zorich_gauss,Zorich_leaves}, followed by a formula for their sum by Kontsevich \cite{Kontsevich} (see also Eskin-Kontsevich-Zorich \cite{EKZ_sum}).

Chaika and Eskin \cite{Chaika_Eskin} show that it suffices to consider individual $\SL_2\bR$ orbits for the asymptotics in the Oseledets theorem to hold.
The behavior then is dictated by the orbit closure.

For applications (e.g. to the wind-tree model \cite{windtree}) it is useful to know when the Lyapunov spectrum has degeneracies, e.g. multiplicities or zero exponents.
Confirming a conjecture of Zorich, for strata Avila and Viana \cite{AvilaViana} showed that the spectrum is simple.

The question of zero exponents was investigated from several points of view.
The situation when the cocycle has an $\SL_2\bR$-invariant isometric piece was considered by Aulicino \cite{Aulicino_disks, Aulicino_affine} as well as M\"oller \cite{Moller_Shimura}.
A geometric criterion of Forni \cite{Forni_geometric} gives an upper bound for the number of zero exponents.

\subsection{The main result}
A general mechanism for zero exponents was described in some examples by Forni, Matheus, and Zorich \cite{FMZ_zero, FMZ_bundles}.
It was based on an abundance of examples, as constructed by Forni-Matheus-Zorich \cite{FMZ_square}, Eskin-Kontsevich-Zorich \cite{EKZ_square}, and McMullen \cite{McMullen_Hodge}.
They conjectured in \cite{FMZ_zero} that this is the only situation in which zero exponents occur.

The purpose of this paper is to prove their conjecture in a slightly refined form, to account for other possibilities.
The main result (see \autoref{thm:KZ_zero}) is as follows.

\begin{theorem}
\label{thm:main_zero_exp}
 Let $\cM$ be an affine invariant manifold of a stratum of translation surfaces, equipped with an ergodic $\SL_2(\bR)$-invariant probability measure.
 For the Kontsevich-Zorich cocycle over $\cM$, the number of zero exponents is precisely equal to the constraints predicted by the monodromy.
 
 Concretely, let $E_\bR$ be one of the flat $\bR$-irreducible pieces of the KZ cocycle.
 Let $G$ be the Zariski closure of the monodromy of $E_\bR$.
 
 Then zero exponents in $E_\bR$ can occur if and only if we are in the following situation.
 The group $G$ has at most one non-compact factor, equal up to finite index to $\SU_{p,q}$, for some $p> q$, or $\SO_{2n}^*$ and $n$ is odd.
 
 The representation in which $\SU_{p,q}$ occurs is the standard one, or an exterior power of the standard.
 In the standard representation, there are $2(p-q)$ zero exponents.
 
 If it is $\SU_{p,q}$ in the $k$-th exterior power of the standard with $k\geq 2$, then necessarily $q=1$.
 The number of zero exponents is then $\binom{p-1}{k-2} + \binom{p-1}{k} $.
 This number is minimum possible, given the monodromy constraint (see \autoref{sec:geom_oseledets} and \autoref{cor:zero_is_rk} for details).
 
 If the group is $\SO^*_{2n}$, then zero exponents occur only if $n$ is odd, in which case there are precisely four.
 
 Moreover, the number of strictly positive exponents bounds above the rank of the second fundamental form, cf. Problem 1 asked by Forni, Matheus and Zorich in \cite{FMZ_bundles}.
\end{theorem}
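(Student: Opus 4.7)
The plan is to combine the Hodge-theoretic rigidity of the Kontsevich-Zorich cocycle over affine invariant manifolds with a representation-theoretic classification of the admissible real monodromy groups. The first step is to use the fact, established in \cite{sfilip_algebraicity}, that the KZ cocycle over $\cM$ underlies a polarized variation of Hodge structure of weight one. Deligne semisimplicity for flat bundles of geometric origin then guarantees that the monodromy on each $\bR$-irreducible flat summand $E_\bR$ is reductive and that $E_\bR$ inherits a compatible polarized sub-VHS. With this in hand I would classify the real reductive groups $G$ that are compatible with such a sub-VHS: the non-compact almost-simple factors of $G$ must be of Hermitian type (the standard Deligne--Simpson constraint on weight-one polarized VHS), and combining this with $\SL_2(\bR)$-equivariance along Teichm\"uller disks and Schur's lemma on the complexified isotypic decomposition cuts the list down to $\mathrm{Sp}_{2g}(\bR)$ in the standard representation, $\SU_{p,q}$ in an exterior power of the standard, and $\SO^*_{2n}$ in the standard, together with cases where $E_\bR$ is unitarizable and all exponents vanish for trivial isometric reasons.

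Next I would bound from above the number of strictly positive exponents in $E_\bR$ using Forni's criterion: for a weight-one polarized VHS, the Oseledets-refined filtration forces the count of strictly positive exponents to be controlled by the generic rank of the second fundamental form $A\colon V^{1,0}\to V^{0,1}$ of the Hodge bundle, contracted with the tautological direction coming from $\omega$. Since $A$ is $G$-equivariant and its value at each point lies in a prescribed $G$-stable subspace of $\mathrm{Hom}(V^{1,0},V^{0,1})$, a direct representation-theoretic computation yields the maximum allowable rank for each group-representation pair on the list above. For $\SU_{p,q}$ in the standard representation this maximum is $q$, producing the claimed $2(p-q)$ zero exponents; for $\SU_{p,1}$ in the $k$-th exterior power, the computation reduces to branching the relevant $K$-module and gives the binomial expression $\binom{p-1}{k-2}+\binom{p-1}{k}$; for $\SO^*_{2n}$ in the standard representation, the maximum rank leaves a $4$-dimensional kernel exactly when $n$ is odd, giving the four zero exponents in that case.

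The main obstacle is the matching lower bound, namely that $A$ actually attains its maximal $G$-equivariant rank almost everywhere, so that no additional zero exponents are hidden by degeneration. My plan here is twofold. First, the $\SL_2(\bR)$-direction supplies, via the tautological form $\omega$, a universal non-zero first-order deformation of the period map whose image under $A$ must, by $G$-equivariance and the Zariski density of the monodromy in its complexification, generate the full maximal subspace. Second, the locus on which the rank of $A$ drops is a proper real-algebraic (using the algebraicity of $\cM$) and $\SL_2(\bR)$-invariant subset, hence of measure zero by ergodicity. Together these two ingredients pin down the generic rank of $A$ to its upper bound and rule out extra zero exponents. This rigidity argument is where I expect the most delicate work to lie, since one has to rule out the a priori possibility that the $\SL_2(\bR)$-direction is systematically tangent to a proper $G$-subspace; this is the Hodge-theoretic analog of the transversality statements that underlie Forni's criterion.

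Finally, assembling the classification from the first step with the sharp rank estimates from the second and third steps yields the precise enumeration of zero exponents stated in the theorem, and as an immediate byproduct gives the upper bound on the rank of the second fundamental form in terms of the number of strictly positive exponents, answering Problem~1 of \cite{FMZ_bundles}.
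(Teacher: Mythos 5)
Your first step—classifying the admissible real groups via Deligne semisimplicity, Hermitian-type constraints, and the Mumford--Tate group—matches the paper's \autoref{sec:alg_hull_classification} closely and is sound. The trouble begins with the rest of your plan, which inverts the logic of the actual proof and ultimately rests on an open problem.

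You propose to determine the number of strictly positive exponents by showing the second fundamental form $\sigma$ generically attains its $G$-theoretic maximal rank. This presupposes that the rank of $\sigma$ \emph{equals} the number of positive exponents. But that equality is precisely Problem~1 of \cite{FMZ_bundles}, and the paper explicitly states in \autoref{sec:zero_exponents} that it proves only the inequality ``rank of $\sigma\leq$ number of positive exponents'' (\autoref{prop:rk_sigma}) and leaves the converse open. So the step ``compute the maximal rank of $\sigma$ $\Rightarrow$ get the exact count of positive exponents'' cannot be justified; it assumes what one would need to prove. Moreover your argument for generic maximality of the rank (via the tautological $\omega$-direction, $G$-equivariance, and Zariski density) is only a heuristic: ergodicity shows the rank is a.e. constant, but not that the constant is the $G$-theoretic maximum, and you have not explained why the differential of the period map should be confined to a specific $G$-stable subspace whose maximal rank you can compute. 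Your own example of $\SU_{p,1}$ in an exterior power shows this subtlety matters—there the realized rank $\binom{p-1}{k-1}$ is not read off from the signature alone, but from the fact that $\sigma_{\wedge^k}$ is induced by Leibnitz from a rank-one $\sigma$ in the standard representation (\autoref{prop:su_p1_details}).

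What your plan is missing is the entire dynamical core of the paper (\autoref{sec:refinement}). The actual argument proceeds in the opposite direction: assuming the cocycle has $2k$ zero exponents, Forni's \emph{formula} for partial sums of exponents forces the component $\sigma_{22}$ of the second fundamental form to vanish pointwise (\autoref{cor:sigma_phi_perp}), which in turn shows that the subbundle $E^{>0}\oplus E^{\geq 0}_{F^1}$ is $g_t$-invariant (\autoref{prop:g_t-invar}). Combined with flatness along unstable leaves (\autoref{prop:unstable_F1}), the holomorphicity of $F^1$ and $\bC^\times$-invariance in period coordinates, and the Borel fixed-point theorem applied to a maximal real split torus, one deduces that the zero restricted weight space of the monodromy representation has dimension at least $2k$ (\autoref{prop:real_rk}). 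The matching reverse inequality is automatic from \autoref{cor:zero_is_rk}. None of this machinery appears in your sketch, and without it there is no route to the upper bound on zero exponents that avoids the open Problem~1.
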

We also give a classification of possible groups in the ``locally flat'' algebraic hull (see \autoref{thm:gps_alg_hull} and \autoref{sec:monodromy}).
\begin{theorem}
\label{thm:monodromy_classification}
 From \cite{sfilip_ssimpleKZ} the Kontsevich-Zorich cocycle is semisimple, and its decomposition respects the Hodge structure.
 Consider an $\bR$-irreducible piece, and let $G$ be the corresponding semisimple group in the algebraic hull.
 
 Then $G$ has at most one non-compact factor, and it lies in a certain representation.
 At the level of Lie algebras, the corresponding real Lie algebra and representation must be one from the list
\begin{enumerate}
 \item [(i)] $\su_{p,q}$ in the standard representation, or $\su_{p,1}$ in any exterior power representation.
 \item [(ii)] $\so_{2n-1,2}(\bR)$ in the spin representation.
 \item [(iii)] $\fraksp_{2g}(\bR)$ in the standard representation.
 \nopagebreak[1]
 \item [(iv)] $\so_{2n}^*$ in the standard representation, or $\so_{2n-2,2}(\bR)$ in either of the spin representations.
\end{enumerate}
 The classification applies to both the $\SL_2\bR$-invariant and flat semisimple decompositions (see \autoref{thm:SL_2_invar}).
\end{theorem}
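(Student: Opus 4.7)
The plan is to combine the Hodge-compatible semisimplicity of the KZ cocycle from~\cite{sfilip_ssimpleKZ} with the Satake/Deligne classification of symplectic representations of real Lie groups underlying weight-one polarized variations of Hodge structure.

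First, each flat $\bR$-irreducible piece $E_\bR$ inherits from $H^1$ a polarized variation of Hodge structure (VHS) of weight one: the Hodge filtration $F^1E_\bC \subset E_\bC$ is well-defined fiberwise, the polarization descends from the intersection pairing, and this data is compatible with the flat decomposition. The $\bR$-Zariski closure $G$ of the monodromy acting on $E_\bR$ is then reductive (by irreducibility together with the polarizing form), and in fact semisimple modulo a compact center that does not affect the classification.

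Second, at any basepoint the Hodge decomposition on $E_\bR$ is equivalent to a homomorphism $h\colon U(1)\to G(\bR)$ such that $\operatorname{Ad}(h(i))$ is a Cartan involution of the adjoint group, and the representation of $G$ on $E_\bR$ has weight one under $h$. This places $(G, E_\bR)$ inside Satake's framework of symplectic representations of real Hermitian-type Lie groups. Running through the classification and keeping only those real simple groups admitting an irreducible symplectic representation of weight one recovers exactly the families (i)--(iv). The exceptional Hermitian real forms of $E_6$ and $E_7$ are excluded by inspecting their minuscule representations, which do not admit a weight-one polarization of the required type.

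Third, the constraint that at most one simple factor of $G$ is non-compact is argued directly from the Hodge structure. If $G = G_1\cdot G_2$ acts irreducibly on $E_\bR$, then over $\bC$ the representation decomposes as a tensor product $V_1 \otimes V_2$ with $V_i$ an irreducible representation of $G_i$, and Hodge weights add under tensor products. Weight one on $V_1 \otimes V_2$ therefore forces, after relabelling, $V_2$ to be pure of weight zero, hence of trivial Hodge type $(0,0)$. The factor $G_2$ then preserves the induced positive-definite Hermitian form on $V_2$ coming from the polarization, and consequently lands in a compact subgroup. Both the $\SL_2\bR$-invariant and the flat semisimple versions of the conclusion are then handled uniformly via \autoref{thm:SL_2_invar}.

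The main obstacle will be the precise translation between the Hodge-theoretic input (the $h$-homomorphism, the polarization, and the Hermitian symmetric domain $G/K$) and the dynamical input (the Zariski closure of KZ monodromy as a real algebraic group), in particular the verification that $G$ is genuinely semisimple and that any compact central torus contributes nothing new to the representation. A subsidiary, largely bookkeeping, obstacle is ruling out the exceptional Hermitian forms of $E_6$ and $E_7$: they do abstractly support weight-one polarized VHS, so one must check that they cannot be realized as the algebraic hull of the KZ cocycle, either by a representation-theoretic inspection against the KZ constraints, or by invoking the polarization shape directly.
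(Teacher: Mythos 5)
Your overall strategy — reduce to classifying symplectic weight-one representations of real Hermitian-type groups and then run through the Satake/Deligne tables — is the same strategy as the paper's. The argument for "at most one non-compact factor'' (tensor factorization plus additivity of Hodge weights, pushing the other factors into the compact unitary group of the positive-definite form) is also essentially the argument the paper gives.

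However, there is a genuine gap at the pivotal step, and you flag the wrong thing as the obstacle. You write that ``at any basepoint the Hodge decomposition on $E_\bR$ is equivalent to a homomorphism $h\colon U(1)\to G(\bR)$'' and proceed as if this were automatic. It is not. The Hodge decomposition at a point $x$ gives a homomorphism $h\colon \bU\to \GL(E_{x,\bR})$, but the Hodge filtration is not flat, so there is no a priori reason for the image of $h$ to lie inside the Zariski closure $G$ of the monodromy. Establishing this inclusion is precisely what lets one transfer the Hodge-theoretic classification to the algebraic hull, and the paper handles it by two different mechanisms for the two settings. For the flat decomposition it invokes the theorem (Deligne, Andr\'e) that at a Hodge-generic point the identity component $G^\circ$ is a normal subgroup of the Mumford--Tate group $\MT(E_x)$; since $\MT(E_x)$ contains $h(\bS)$ by definition, one can then project $h$ to $G$. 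For the $\SL_2\bR$-invariant decomposition (\autoref{thm:SL_2_invar}) the paper instead describes the algebraic hull as the stabilizer of a family of $\SL_2\bR$-invariant tensors, shows these can be chosen of pure Hodge type $(n,n)$, and observes that $h(\bU)$ preserves all tensors of type $(n,n)$ hence lands in $G$. Your proposal contains neither of these arguments, and without one of them the statement ``$(G,E_\bR)$ is of Hermitian type'' is unjustified. This is the heart of the proof, not a translation issue.

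A secondary remark on the exceptional groups: you say the Hermitian forms of $E_6$ and $E_7$ ``do abstractly support weight-one polarized VHS,'' so they must be excluded by KZ-specific considerations. That is not the mechanism. The exclusion is purely Hodge-theoretic and uniform: a weight-one polarized real Hodge structure with $h$ landing in a simple factor $G$ forces the $\bU$-decomposition of the representation to have exactly two nontrivial weight spaces, which together with the minuscule-weight condition (the paper cites Lewis, Thm.\ B.65, cf.\ Serre and Deligne's tables) eliminates all exceptional Hermitian real forms; no input from the KZ dynamics is needed. So the bookkeeping you defer is both easier and more self-contained than you suggest, whereas the step you treat as mere ``translation'' is the substantive one.
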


After a preliminary version of this paper was circulated, in joint work with Matheus and Forni \cite{FFM_Quaternions} we found instances with monodromy in the group $\SO_{6}^*$ in its standard representation.
This coincides with $\SU_{3,1}$ in the second exterior power representation.
It would be curious to find orthogonal groups in spin representations occurring in the monodromy of the Kontsevich-Zorich cocycle.

\subsection{Some consequences}

The above general results on zero exponents have particular instances which could be of interest.
Some of them were previously obtained by Forni \cite{Forni_deviations,Forni_geometric,Forni_spectrum}.

\begin{corollary}
 Let $\cM$ be an affine invariant manifold of a stratum of flat surfaces.
 In particular, $\cM$ could be the whole stratum or a \Teichmuller curve.
 
 \begin{enumerate}
  \item[(i)] Let $p(T\cM)$ be the subbundle of the Kontsevich-Zorich cocycle corresponding to its tangent space.
  Then no zero exponents can occur in $p(T\cM)$, or any of its Galois conjugates.
  \item[(ii)] The Zariski closure of the monodromy in $p(T\cM)$ or any of its Galois conjugates is the corresponding full symplectic group.
 \end{enumerate}
\end{corollary}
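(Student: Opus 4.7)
The plan is to combine the monodromy classification of Theorem~\ref{thm:monodromy_classification} with the specific geometric structure of $p(T\cM)$ as the tangent bundle of an algebraic affine invariant manifold. By \cite{sfilip_algebraicity}, $\cM$ is algebraic over a number field $k$; Galois conjugation then produces subbundles $p(T\cM)^\sigma$ of the KZ cocycle for each embedding $\sigma$. Each such subbundle preserves a non-degenerate symplectic form (the $\bQ$-rational intersection form), carries a polarized variation of Hodge structure of pure weight one with equal Hodge numbers $h^{1,0}=h^{0,1}$ (since $T\cM$ is a complex vector space), and contains a distinguished $\SL_2(\bR)$-invariant two-plane coming from the real and imaginary parts of $\omega^\sigma$.

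I will decompose each $p(T\cM)^\sigma$ into $\bR$-irreducible pieces and apply Theorem~\ref{thm:monodromy_classification} to each piece $W$. The constraints above should restrict the Lie algebra of the Zariski closure of the monodromy on $W$ to the standard representation of $\fraksp_{2k}(\bR)$, $\su_{p,p}$, or $\so^*_{2n}$: the other items of Theorem~\ref{thm:monodromy_classification} either fail weight one (the exterior-power representations of $\su_{p,1}$ have weight equal to the exterior degree, and the spin representations similarly have the wrong weight outside a limited range), fail to satisfy $h^{1,0}=h^{0,1}$ (as with $\su_{p,q}$ for $p\neq q$), or do not preserve any symplectic form on the underlying real vector space.

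The hard step is ruling out the two exotic cases $\su_{p,p}$ and $\so^*_{2n}$. These standard representations carry, respectively, a $\bC$-linear and an $\bH$-linear endomorphism commuting with the group action. By the Hodge-compatibility of the semisimple decomposition from \cite{sfilip_ssimpleKZ}, such an endomorphism would descend to the KZ cocycle and refine the $\bR$-irreducible splitting, contradicting the irreducibility of $W$. I expect the delicate verification to proceed by observing that the tautological two-plane in $W$, which is monodromy-invariant along the geodesic flow and sits in a specific position relative to the Hodge filtration, forces the commutant of the monodromy on $W$ to be $\bR$, so no nontrivial $\bC$- or $\bH$-structure can be present. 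This identifies the monodromy on $W$ with the full symplectic group of $W$, proving (ii).

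Part (i) then follows at once from Theorem~\ref{thm:main_zero_exp}, since the full symplectic group in its standard representation is not among the monodromy types listed there as admitting zero Lyapunov exponents.
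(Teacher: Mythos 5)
Your high-level strategy is right in two respects: you correctly observe that part (ii) plus Theorem~\ref{thm:main_zero_exp} implies part (i), and you correctly reach for the classification in Theorem~\ref{thm:monodromy_classification} to pin down the monodromy. But the step you flag as ``hard'' is where the argument breaks, and it breaks in a way that cannot be patched by ``delicate verification.''

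First, the claim that a commuting $\bC$-linear (resp.~$\bH$-linear) endomorphism on an $\bR$-irreducible piece $W$ would ``refine the $\bR$-irreducible splitting, contradicting irreducibility'' is false. An $\bR$-irreducible representation can perfectly well have commutant $\bC$ or $\bH$ -- this is just the statement that $W$ is of complex or quaternionic type. Indeed, the whole point of the $\SU_{p,q}$ case treated at length in this paper (see the discussion of $E_\bC = E_+ \oplus E_-$ and the remark that ``$\bC$ $\ldots$ will act, for example, on Oseledets spaces'') is that the $\bR$-irreducible piece $E_\bR$ carries a commuting action of $\bC$. So the presence of such a structure does not contradict irreducibility of $W$; you would need an independent argument that the commutant is $\bR$, and the tautological-two-plane observation you sketch is nowhere near sufficient as stated. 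Second, the appeal to ``equal Hodge numbers $h^{1,0}=h^{0,1}$'' to rule out $\su_{p,q}$ with $p \neq q$ is vacuous: for any real polarized variation of Hodge structure of weight one, $E^{1,0}$ and $E^{0,1}$ are complex conjugates, so $h^{1,0}=h^{0,1}$ automatically, including in the $\su_{p,q}$ case where $h^{1,0}=h^{0,1}=p+q$. Third, the dismissal of the $\su_{p,1}$ exterior powers because they ``have weight equal to the exterior degree'' is contradicted by Proposition~\ref{prop:su_p1_details}, where the paper explicitly shows how to twist the circle action so that those exterior powers carry weight-one Hodge structures.

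The input you are missing is the one the paper actually uses: the top Lyapunov exponent of the KZ cocycle is $1$ with multiplicity one (the tautological plane plus Forni's spectral gap). Once you know the Lyapunov spectrum of $p(T\cM)$ has a simple top exponent, only item (iii) of Theorem~\ref{thm:monodromy_classification} -- $\fraksp_{2g}(\bR)$ in the standard representation -- is compatible, since every other entry in the list (e.g.\ $\su_{p,q}$ because of the $\bC$-action, $\so^*_{2n}$ because of the $\bH$-action) forces the top exponent to have multiplicity at least two. That pins down the Zariski closure on $p(T\cM)$; the Galois conjugates then follow because the complexified Zariski closures must agree, and the symplectic group has only one real form on the list. (The paper also records a second route via the closing lemma and dense simple-highest-weight vectors; either works, but some such dynamical input is indispensable, and the Hodge-theoretic/commutant observations alone cannot replace it.)
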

\begin{proof}
 By \autoref{thm:main_zero_exp} which classifies the situations with zero exponents, the claim about Zariski closures of the monodromy implies the one about zero exponents.
 Moreover, proving the claim about Zariski closures for the piece of the cocycle corresponding to $p(T\cM)$ implies it for the other pieces, since the Zariski closures have to be isomorphic after extending scalars to $\bC$.
 Indeed, in the list from \autoref{thm:monodromy_classification}, only one real form of the symplectic group occurs, so this is the only possibility.
 
 To show that on $p(T\cM)$ the monodromy has Zariski closure the full symplectic group, there are two options.
 The first one is to note that the top Lyapunov exponent has multiplicity one, and the only item in \autoref{thm:monodromy_classification} which allows this is the symplectic group.
 
 The other option is to invoke the closing lemma, as used for instance by Wright \cite{Wright_field}.
 This implies the representation of the monodromy group has a dense collection of simple highest weight vectors (i.e. eigenvectors with highest eigenvalue) for diagonalizable elements of the monodromy.
 Again, looking at the list from \autoref{thm:monodromy_classification} yields the claim.
\end{proof}

\subsection{Outline of the paper}

\autoref{sec:refinement} contains the heart of the argument.
In known examples the Oseledets filtration has a further refinement over $\bC$.
\autoref{prop:g_t-invar} proves this refinement must exists whenever zero exponents occur.
This is the key step, which is then combined with dynamical arguments to get restrictions on the monodromy.

\autoref{sec:monodromy} extracts consequences for the monodromy.
\autoref{prop:real_rk} shows that the real rank of the group (rather, representation) is at most the number of non-zero exponents.
Real rank measures, informally, the number of ``interesting'' eigenvalues (e.g. not on the unit circle).
Therefore, this argument provides the desired upper bound on the number of zero exponents.
The second part of \autoref{sec:monodromy} analyzes the restrictions on monodromy coming from Hodge theory.

\autoref{sec:overview_Oseledets} contains a geometric point of view on the Oseledets theorem.
It connects rates of diffusion in symmetric spaces with Lyapunov exponents.
It also provides a unified way to describe Lyapunov exponents depending on the representation.
The real groups and representations which can occur in the Kontsevich-Zorich cocycle are made explicit.

\autoref{sec:zero_exponents} combines the results and deduces the For\-ni-Ma\-the\-us-Zo\-rich conjecture.
A construction in \autoref{subsec:caut_example} explains why in the exterior power cases, the number of zero exponents is not predicted by the signature of the indefinite metric.

\paragraph{Some general remarks}
Throughout, the groups $\SU_{p,q}$ appear in various representations, e.g. on $\bC^{p+q}$.
These are \emph{real} Lie groups, but act on vector spaces which also have a complex structure.
We view the representations as \emph{real} vector spaces, e.g. $\bC^{p+q}\cong \bR^{2(p+q)}$.
However, we also keep track of the action of the algebra $\bC$ on the representation.
Since it commutes with all other structures, it will act, for example, on Oseledets spaces.

As a typical consequence, the Oseledets spaces of the real vector bundles will be real even-dimensional, and will carry an action of $\bC$.

Bundle are complexified only when considering the subbundles from the variation of Hodge structure.
Expressions such as $\bC^n\otimes_\bR \bC$ will be avoided, although they implicitly appear in the arguments.

Moreover, we always work in some finite cover of a stratum where orbifold issues do not appear.
Therefore, the Kontsevich-Zorich cocycle is an honest cocycle, and the period coordinates are well-defined.

\paragraph{Acknowledgements}
I am grateful to Giovanni Forni for asking me the question about zero exponents and for discussions on this topic.
He also provided very useful feedback on a preliminary version of this paper.
I am also grateful to Madhav Nori for some useful remarks about semisimple Lie groups and Hodge structures, in particular about~$\SO^*_{2n}$.

I also had useful discussions with Carlos Matheus, Curtis McMullen, Alex Wright, and Anton Zorich.
I am also grateful to my advisor, Alex Eskin, for discussions on this topic.

\section{Refining the Oseledets filtration}
\label{sec:refinement}

This section contains the main dynamical part of the argument.
We first present an example that motivates the subsequent constructions.
Next, \emph{assuming the cocycle has zero exponents}, we refine the Oseledets filtration.
This is accomplished using Forni's formula for the partial sum of exponents.

The first step refines the filtration along the orbits of the \Teichmuller geodesic flow.
An argument from partially hyperbolic dynamics gives invariance along unstable leaves.
Combined with the holomorphic dependence of the space of $(1,0)$-forms, this puts restrictions on the monodromy.

\subsection{Motivation}

This section describes a refinement of the Oseledets filtration when the Kontsevich-Zorich cocycle has $\SU_{p,q}$ components.
Later we show that when zero exponents occur, this refined filtration must exist.
This structure will be used to analyze the algebraic hull.

To begin, suppose $E$ is an irreducible piece over $\bR$ of the semisimple decomposition of the local system underlying the KZ cocycle.
Suppose its monodromy is contained in $\SU_{p,q}$.
A large set of examples was described by McMullen, Forni-Matheus-Zorich, Matheus-Yoccoz-Zmiaikou \cite{McMullen_Hodge, FMZ_bundles,FMZ_zero, MYZ_origami}.
In those examples, the complexified bundle $E_\bC$ has a further flat splitting, corresponding to eigenspaces of some symmetry of all flat surfaces in that family.

This happens more generally when the monodromy is contained in $\SU_{p,q}$ (acting in the standard representation).
In such a case, the monodromy action commutes with the scaling by $\bC^\times$.
Recall $\SU_{p,q}$ is a \emph{real} Lie group acting on $\bC^{p+q}=\bR^{2(p+q)}$.

This implies we have a decomposition of the complexified bundle $E_\bC=E_+\oplus E_-$.
The $E_+$ bundle corresponds to vectors on which $z\in \bC^\times$ acts by $z$, while $E_-$ to those on which $z\in \bC^\times$ acts by $\conj{z}$.
Complex conjugation in $E_\bC$ exchanges $E_+$ and $E_-$.

We also have a decomposition coming from the Hodge structure
\begin{align*}
 E_+&= E^{1,0}_+\oplus E^{0,1}_+\\
 E_-&= E^{1,0}_-\oplus E^{0,1}_-
\end{align*}
Under complex conjugation $E^{a,b}_+=\conj{E^{b,a}_-}$.
The dimensions are $\dim E^{1,0}_+=p$, $\dim E^{0,1}_+=q$ and $p\geq q$.

Next, consider the Oseledets decomposition $E=E^{>0}\oplus E^0 \oplus E^{<0}$ and denote $E^{\geq 0}:=E^0\oplus E^{>0}$.
This can be further refined to the bundles $E_\pm$, and we only care about the sign of the exponents.

On each piece we have that $\dim E^{>0}_\pm=\dim E^{<0}_\pm=q$ and $\dim E^0_\pm=p-q$.
Therefore $\dim E^{\geq 0}_\pm=p$.
Note however that $\dim E^{1,0}_+=p$, i.e. its codimension inside $E_+$ is $q$.
Define the intersection
$$
E_{F^1}^{\geq 0} := E^{1,0}_+\cap E^{\geq 0}_+
$$
It has\footnote{Apriori, at least $p-q$, but isotropy conditions impose equality (see next section)} dimension $p-q$ and moreover it can be defined alternatively as the intersection $E^{1,0}\cap E^{\geq 0}$.
Indeed, on the $E_-$ component, the spaces don't intersect.

The key observation is that although $E^{\geq 0}_{F^1}$ is not $g_t$-invariant, the direct sum $E^{\geq 0}_{F^1}\oplus E^0$ is, in fact, invariant under $g_t$.
Since $E^{\geq 0}_{F^1}$ provides a complement to $E^{>0}_+$ inside $E^{\geq 0}_+$ we have the equality
$$
E^{\geq 0}_{F^1}\oplus E^{>0} = E^{0}_+ \oplus E^{>0}_+ \oplus E^{>0}_-
$$
The right-hand side is manifestly $g_t$-invariant, while the left-hand is not obviously so.
We shall prove that this phenomenon occurs whenever the bundle has zero exponents.

\subsection{Setup}

We have an $\SL_2\bR$-invariant probability measure, of Le\-besgue class on an affine manifold $\cM$.
Denote by $g_t$ the \Teichmuller geodesic flow.
Consider an $\SL_2\bR$-invariant subbundle $E$, defined over $\bR$ and of dimension $2g$.
The Oseledets theorem gives a $g_t$-invariant decomposition according to the sign of exponents
$$
E = E^{<0}\oplus E^0 \oplus E^{>0}
$$
Introduce the further notation $E^{\geq 0}:=E^0\oplus E^{>0}$.
Assume that we have zero exponents, i.e. $E^0$ is non-trivial.
Recall the basic properties of the decomposition:
\begin{enumerate}
 \item[(i)] $E^{>0}$ is an isotropic subspace
 \item[(ii)] The symplectic-orthogonal of $E^{>0}$ is $E^{\geq 0}$.
 \item[(iii)] The symplectic form on $E^0$ is non-degenerate, thus $\dim_\bR E^0=2k$.
\end{enumerate}
Introduce the notation $F^1:=H^{1,0}$ and define subspaces
\begin{align*}
 E^{\geq 0}_{F^1}&:= F^1\cap E^{\geq 0}_\bC\\
 E^{\geq 0}_{\conj{F^1}}&:= \conj{F^1}\cap E^{\geq 0}_\bC 
\end{align*}
These spaces are complex-conjugates of each other, because $E^{\geq 0}$ is defined over $\bR$.

\begin{lemma}
\label{lemma:dir_sum_orthog}
 We have that $\dim_\bC E^{\geq 0}_{F^1}=k$.
 In fact, the following decomposition holds
 $$
 E^{\geq 0}_\bC= E^{\geq 0}_{\conj{F^1}} \oplus E^{\geq 0}_{F^1}\oplus E^{>0}_\bC
 $$
 Moreover, the decomposition is Hodge-orthogonal.
\end{lemma}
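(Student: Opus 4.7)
The plan is to exploit the interaction of three structures on $E_\bC$: the flat symplectic form $\omega$, the polarizing Hermitian form $h(v,w) = -i\omega(v,\bar w)$ from the Hodge structure (positive definite on $F^1$, negative definite on $\conj{F^1}$, with $F^1$ Lagrangian for $\omega$), and the Oseledets decomposition under $g_t$. Since $\omega$ and complex conjugation are both preserved by Gauss--Manin parallel transport, so is $h$. The first step is to observe that $h(v, v) = h(g_{-t}v, g_{-t}v)$ is constant in $t$, while a uniform bound $|h(\cdot, \cdot)| \leq C\|\cdot\|_H \|\cdot\|_H$ holds in the Hodge norm along an orbit in a compact set. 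For $v \in E^{>0}_\bC$, $\|g_{-t}v\|_H \to 0$ as $t \to \infty$, which forces $h(v,v) = 0$; polarization gives $h \equiv 0$ on $E^{>0}_\bC$, and the mirror argument gives $h \equiv 0$ on $E^{<0}_\bC$. The same reasoning applied to mixed pairs with differing growth rates yields $h(E^0, E^{<0}) = h(E^0, E^{>0}) = 0$.

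Thus the block matrix of $h$ with respect to $E^{<0}_\bC \oplus E^0_\bC \oplus E^{>0}_\bC$ is off-diagonal between $E^{<0}$ and $E^{>0}$ and block-diagonal on $E^0$. Non-degeneracy of $h$ forces the pairing $h \colon E^{<0}_\bC \times E^{>0}_\bC \to \bC$ to be perfect, contributing signature $(g-k, g-k)$ where $g-k = \dim_\bC E^{>0}_\bC$; since the global signature of $h$ on $E_\bC$ is $(g,g)$, the restriction $h|_{E^0_\bC}$ must have signature $(k,k)$. This signature identification is the real content of the lemma.

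Next, Hodge positivity on $F^1$ together with $h \equiv 0$ on $E^{>0}_\bC$ gives $F^1 \cap E^{>0}_\bC = 0$, and similarly $F^1 \cap E^{<0}_\bC = 0$. Hence the projection $\pi\colon E^{\geq 0}_{F^1} \to E^0_\bC$ along $E^{>0}_\bC$ is injective. For $\tilde v \in E^{\geq 0}_{F^1}$ decompose $\tilde v = v_0 + w$ with $v_0 \in E^0_\bC$, $w \in E^{>0}_\bC$; the vanishing identities above give $h(\tilde v, \tilde v) = h(v_0, v_0)$. Since $h(\tilde v, \tilde v) > 0$ for $\tilde v \neq 0$, the image $V := \pi(E^{\geq 0}_{F^1}) \subset E^0_\bC$ is $h$-positive definite, so $\dim V \leq k$. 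The elementary bound $\dim E^{\geq 0}_{F^1} \geq \dim F^1 + \dim E^{\geq 0}_\bC - \dim E_\bC = k$ then forces $\dim E^{\geq 0}_{F^1} = k$; complex conjugation gives the same dimension for $E^{\geq 0}_{\conj{F^1}}$, with image $\conj V$ being $h$-negative definite.

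Finally, $V$ and $\conj V$ intersect trivially and span $E^0_\bC$ by dimension, so lifting via $\pi$ with kernel $E^{>0}_\bC$ yields the direct sum decomposition of $E^{\geq 0}_\bC$. Hodge orthogonality is a short check: $h(F^1, \conj{F^1}) = -i\omega(F^1, F^1) = 0$ because $F^1$ is $\omega$-Lagrangian, giving $E^{\geq 0}_{F^1} \perp_h E^{\geq 0}_{\conj{F^1}}$; and using $\tilde v = v_0 + w$ together with $h(E^0, E^{>0}) = h(E^{>0}, E^{>0}) = 0$ shows that both $E^{\geq 0}_{F^1}$ and $E^{\geq 0}_{\conj{F^1}}$ are $h$-orthogonal to $E^{>0}_\bC$. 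The step I expect to be the main obstacle is the signature computation of $h|_{E^0}$: once that is in hand, the rest is dimension-chasing, but that identification is precisely what converts the mere presence of zero exponents into a rigid Hodge-theoretic constraint on $E^0$.
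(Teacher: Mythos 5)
Your proof is correct, and it takes a genuinely different route from the paper's. The paper's proof bounds $\dim E^{\geq 0}_{F^1}\geq k$ by a codimension count, shows $F^1\cap E^{>0}_\bC=0$ by combining the isotropy of $E^{>0}$ with the positivity of $-i\omega(\alpha,\conj{\alpha})$ for holomorphic $\alpha\neq 0$, and then assembles the direct sum and Hodge orthogonality from these ingredients together with the facts that $F^1$ is Lagrangian and $E^{\geq 0}$ is the symplectic orthogonal of $E^{>0}$. You instead first establish the stronger structural fact that the flat Hermitian form $h(v,w)=-i\omega(v,\conj{w})$ vanishes identically on $E^{>0}_\bC$ (and $E^{<0}_\bC$) and pairs them perfectly, so that its restriction to $E^0_\bC$ has signature $(k,k)$; the lemma then drops out through the projection $\pi\colon E^{\geq 0}_{F^1}\to E^0_\bC$. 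Your route gives an explicit proof that the sum is direct --- the image of $\pi$ is $h$-positive and the image of $\conj{\pi}$ is $h$-negative, hence they are transverse in $E^0_\bC$ --- a step the paper treats quite tersely, and the $(k,k)$ signature on $E^0_\bC$ is a structurally useful byproduct. Two minor remarks: the bound $|h(v,v)|\leq \norm{v}_H^2$ is a pointwise algebraic identity (decompose $v$ by Hodge type, the cross terms of $h$ vanish since $F^1$ is Lagrangian, and $h$ is $\pm$-definite on $F^1$ and $\conj{F^1}$), so no uniform constant $C$ nor any compactness of the orbit is needed; and the flow argument for $h\equiv 0$ on $E^{>0}_\bC$, while clean, is slightly more than the lemma requires, since the weaker statement $F^1\cap E^{>0}_\bC=0$ already follows from isotropy plus positivity without invoking Oseledets decay.
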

\begin{proof}
 The intersection of $F^1$ and $E^{\geq 0}_\bC$ has dimension at least $k$.
 Indeed, the first space has codimension $g$, the second codimension $g-k$.
 
 Next, we claim $F^1\cap E^{>0}_\bC=\{0\}$.
 If $\alpha$ is in the intersection we also know $\conj{\alpha}\in E^{>0}_\bC$ since this space is defined over $\bR$.
 Because $E^{>0}$ is isotropic, the symplectic pairing of $\alpha$ and $\conj{\alpha}$ must vanish.
 This is a contradiction since $\alpha$ is holomorphic.
 
 Now observe that the above two properties hold for $F^1$ replaced with $\conj{F^1}$.
 Moreover $F^1$ and $\conj{F^1}$ don't intersect.
 This yields the direct sum decomposition, with summands of dimensions $k,k$, and $g-k$.
 
 Let us now prove Hodge-orthogonality.
 Recall $E^{\geq 0}$ is the same as the symplectic orthogonal of $E^{>0}$.
 Take a real class $c\in E^{>0}$ and decompose it according to $(1,0)$ and $(0,1)$ types
 $$
 c=\alpha\oplus \conj{\alpha}
 $$
 Take $\beta \in E^{\geq 0}\cap F^1$, then $(c,\beta)=0$ where $(,)$ is the symplectic pairing.
 Therefore $\alpha$ and $\beta$ are Hodge-orthogonal, for all $\alpha$ coming from a real class $c\in E^{>0}$.
 Therefore, the Hodge inner product of $\beta$ and $c$ also vanishes, for all real classes $c\in E^{>0}$.
 But these span $E^{>0}$, so $E^{\geq 0}_{F^1}$ is orthogonal to it.
 
 Similarly, the same calculation shows that $\conj{\beta}$ is Hodge-orthogonal to any class in $E^{>0}$.
 The complex conjugate space $E^{\geq 0}_{\conj{F^1}}$ satisfies the same property.
 Finally, any $(1,0)$ and $(0,1)$ subspaces are automatically Hodge-orthogonal, so $E^{\geq 0}_{F^1}$ and $E^{\geq 0}_{\conj{F^1}}$ are Hodge-orthogonal.
\end{proof}

Note that the decomposition from \autoref{lemma:dir_sum_orthog} is \emph{not} invariant under $g_t$.
We haven't yet used anything about zero exponents.
However, we have the following claim.

\begin{proposition}[Key Proposition]
\label{prop:g_t-invar}
 The following subbundle is $g_t$-invariant
 $$
 E^{\geq 0}_{F^1}\oplus E^{>0}
 $$
 In other words, along the $g_t$-flow, the bundle $E^{\geq 0}_{F^1}$ can only move in the $E^{>0}$ direction.
\end{proposition}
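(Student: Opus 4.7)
The plan is to show that $V:=E^{\geq 0}_{F^1}\oplus E^{>0}_\bC$ is a $g$-dimensional Lagrangian subbundle of $E_\bC$ which realizes the maximal $g_t$-expansion of $\wedge^g$, and then to invoke Forni's formula for the sum of positive Lyapunov exponents, together with the equality case of its underlying pointwise inequality, to deduce that $V$ must be $g_t$-invariant.

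\medskip

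\emph{Geometric structure of $V$.} By \autoref{lemma:dir_sum_orthog} the subbundle $V$ has complex dimension $g$. First I would verify that $V$ is Lagrangian in $(E_\bC,\omega)$: $E^{>0}_\bC$ is isotropic by property~(i); $E^{\geq 0}_{F^1}$ is isotropic since $F^1=H^{1,0}$ is Lagrangian for the cup product; and the two summands are symplectically orthogonal because $E^{\geq 0}_{F^1}\subset E^{\geq 0}_\bC=(E^{>0}_\bC)^\perp$ by property~(ii). Next, the top $g$ Lyapunov exponents of $g_t$ acting on $E_\bC$ consist of $\lambda_1\geq\cdots\geq\lambda_{g-k}>0$ together with $k$ zeros, summing to $\Sigma:=\sum_{\lambda_i>0}\lambda_i$. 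A $g$-dimensional subspace attains the maximal expansion rate $\Sigma$ for $\wedge^g g_t$ iff it contains $E^{>0}_\bC$ and projects onto a $k$-dimensional subspace of $E^0_\bC$; both hold for $V$, since $E^{\geq 0}_{F^1}\cap E^{>0}_\bC=F^1\cap E^{>0}_\bC=0$ makes the projection $E^{\geq 0}_{F^1}\to E^0_\bC$ injective.

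\medskip

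\emph{Forni's formula and its equality case.} For the irreducible sub-cocycle on $E$, Forni's formula expresses $\Sigma$ as an integral over $\cM$ of a non-negative Hodge-theoretic function $\Phi_E$ built pointwise from the second fundamental form of $F^1\cap E_\bC$ inside $E_\bC$. The derivation also produces a pointwise bound on $\tfrac{d}{dt}\log\det\bigl(g_t|_W\bigr)$ for any $g$-dimensional Lagrangian subbundle $W\subset E_\bC$, with equality realized only when the evolution of $W$ is aligned with the principal directions of the second fundamental form. Applying this bound to $W=V$, the integrated equality forced by the previous step implies pointwise equality for almost every $x$ and every $t$, which in turn forces $g_tV_x=V_{g_tx}$ for almost every $x$.

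\medskip

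\emph{Main obstacle.} The most delicate step is to extract from the equality case of Forni's formula an actual subbundle identity $g_tV_x=V_{g_tx}$, rather than a weaker equality of Lyapunov growth rates. Forni's formula is classically developed for the holomorphic Lagrangian $F^1\cap E_\bC$, and one must promote its pointwise inequality and its equality case to arbitrary measurable Lagrangian subbundles. The fact that $V$ differs from $F^1\cap E_\bC$ only in the $E^{>0}_\bC$-direction, which is itself $g_t$-invariant, together with the Hodge-orthogonality already recorded in \autoref{lemma:dir_sum_orthog}, is what makes this transfer possible.
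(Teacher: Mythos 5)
Your proposal has the right ingredients — Forni's formula for the partial sum of exponents, the Lagrangian nature of $E^{\geq 0}_{F^1}\oplus E^{>0}_\bC$, and the Hodge-orthogonality recorded in \autoref{lemma:dir_sum_orthog} — but the step you flag yourself as the ``main obstacle'' is a genuine gap, and the appeal to a general ``equality case'' of a pointwise bound on $\frac{d}{dt}\log\det(g_t|_W)$ does not close it. Asymptotic maximal expansion of $\wedge^g g_t$ on $V$ is not rigid: it is shared by an entire cell of Lagrangian subbundles containing $E^{>0}$ and sitting inside $E^{\geq 0}$, so it does not pin down $V$, let alone show $g_tV_x=V_{g_tx}$. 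Moreover, what saturation in Forni's formula actually gives (after integrating the non-negative defect) is a pointwise \emph{algebraic} vanishing: the block $\sigma_{22}$ of the second fundamental form restricted to $(E^{>0}_{1,0})^\perp = E^{\geq 0}_{F^1}$ vanishes, i.e.\ $\ip{\sigma\phi_i}{\conj{\phi_j}}=0$ for $\phi_i,\phi_j\in E^{\geq 0}_{F^1}$ (this is \autoref{cor:sigma_phi_perp}). That vanishing is an input, not a conclusion, and one still has to convert it into invariance.

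The conversion is the actual content of the paper's proof and is absent from yours. The paper decomposes the Gauss--Manin connection along the flow direction as $\nabla^{GM}=\nabla^{Ch}+\sigma+\sigma^\dag$, notes that $\nabla^{Ch}$ preserves $(p,q)$-type and $\sigma^\dag$ kills $(1,0)$-forms, so for $\phi_i\in E^{\geq 0}_{F^1}$ one has $\nabla^{GM}\phi_i=\nabla^{Ch}\phi_i+\sigma\phi_i$, and then pairs against $\conj{\phi_j}\in E^{\geq 0}_{\conj{F^1}}$: the first term dies by type, the second by the vanishing of $\sigma_{22}$. Since the decomposition $E^{\geq 0}=E^{>0}\oplus E^{\geq 0}_{F^1}\oplus E^{\geq 0}_{\conj{F^1}}$ is Hodge-orthogonal and $E^{\geq 0}$, $E^{>0}$ are already $g_t$-invariant, this forces $\nabla^{GM}\phi_i\in E^{>0}\oplus E^{\geq 0}_{F^1}$, which is the claimed invariance. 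Your last paragraph gestures at exactly this (``differs from $F^1\cap E_\bC$ only in the $E^{>0}_\bC$-direction\dots together with the Hodge-orthogonality\dots''), but without the explicit splitting of $\nabla^{GM}$ and the type argument it is not a proof; it is the part that needs to be supplied.
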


\begin{remark}
 A similar statement holds for the complex-conjugate bundle $E^{\geq 0}_{\conj{F^1}}$.
 One can also consider a time reversal and obtain a statement for $E^{\leq 0}:=E^{<0}\oplus E^0$.
\end{remark}

\subsection{Some preliminaries for \autoref{prop:g_t-invar}}

Recall the Forni formula (see \cite[Theorem 1]{FMZ_bundles} and \cite{Forni_deviations}) for the partial sum of exponents
$$
\lambda_1+\cdots+ \lambda_{g-k} = \int \Phi_{g-k}(\omega, E^{>0}(\omega))d\mu(\omega)
$$
The function $\Phi_{g-k}$ is defined by (see \cite[Lemma 2.8]{FMZ_bundles})
$$
\Phi_{g-k}(\omega, E^{0}) = \Lambda_1(\omega)+ \cdots \Lambda_g(\omega) - \sum_{i,j=g-k+1}^g
|B_\omega^\bR(c_i,c_j|^2
$$
The terms $\Lambda_i$ correspond to the singular values of the inner product defined by the second fundamental form.
The pairing $B_\omega$ is defined by 
$$
B_\omega(c_i,c_j)=\ip{A_\omega h(c_i)}{h(c_j)}
$$
where $A_\omega$ is the second fundamental form, and $h(-)$ denotes the holomorphic $1$-form with given real part.
Finally, the $c_i$ are defined as a Hodge-orthonormal basis of a (real) Lagrangian space, such that the first $g-k$ give a basis of the space $E^{>0}$.

We now re-express the above formula in terms of the spaces $E^{1,0}$ and $E^{0,1}$ of holomorphic and anti-holomorphic $1$-forms.
We view the second fundamental form as a map $\sigma: E^{1,0}\to E^{0,1}$.

Consider the space $E^{>0}$, complexify it and project to $E^{0,1}$ to obtain the space $E^{>0}_{0,1}$.
It is of complex dimension $g-k$.
We also have its Hodge-orthgonal $(E^{>0}_{0,1})^\perp$ inside $E^{0,1}$, and we can also take complex-conjugates.
This yields the decomposition, where complex conjugation swaps $(1,0)$ and $(0,1)$
\begin{equation}
\label{eqn:E_1001perp}
  \begin{aligned}
  E^{0,1} &= E^{>0}_{0,1}\oplus (E^{>0}_{0,1})^\perp \\
  E^{1,0} &= E^{>0}_{1,0}\oplus (E^{>0}_{1,0})^\perp
  \end{aligned}
\end{equation}
Viewing the second fundamental form as a map $\sigma:E^{1,0}\to E^{0,1}$, we can take its components for the above decomposition.
We are interested in the lower-right corner
$$
\sigma_{22}: (E^{>0}_{1,0})^\perp \to (E^{>0}_{0,1})^\perp
$$

\begin{lemma}
 The term being subtracted in the Forni formula can be expressed as
 $$
 \sum_{i,j=g-k+1}^g |B_\omega(c_i,c_j)|^2 = \tr (\sigma_{22}\sigma_{22}^\dag)
 $$
\end{lemma}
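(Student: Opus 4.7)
The plan is to unpack both sides and recognize them as the Hilbert-Schmidt norm squared of the same operator, expressed in compatible Hermitian-orthonormal bases.

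First I would set up the dictionary between the real Lagrangian $L$ spanned by $\{c_1,\dots,c_g\}$ and the complex vector space $E^{1,0}$. The map $h: H^1(\bR) \to E^{1,0}$ sending a real class $c = \alpha+\bar\alpha$ (with $\alpha \in E^{1,0}$) to $\alpha$ is an $\bR$-linear isomorphism. Restricting to $L$ gives an $\bR$-linear injection whose image is a totally real form of $E^{1,0}$, because the symplectic form and the Hodge metric on $L_\bC$ are related to the Hodge metric on $E^{1,0}$ via identities of the shape $\langle c_1,c_2\rangle_{Hodge} = 2\Re\langle h(c_1),h(c_2)\rangle_{Hodge}$ and $\omega(c_1,c_2) = -2\Im\langle h(c_1),h(c_2)\rangle_{Hodge}$. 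Under these identities, $\{c_i\}$ being simultaneously Hodge-orthonormal and Lagrangian translates directly into $\{h(c_i)\}$ being a Hermitian-orthonormal $\bC$-basis of $E^{1,0}$ (modulo an overall factor absorbed into normalization conventions).

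Next I would match the two block decompositions. By construction, $E^{>0}_{1,0}$ is the $\bC$-span of $\{h(c_1),\dots,h(c_{g-k})\}$, since these are the images of a basis of $E^{>0}$ under $h$; the injectivity of the projection $E^{>0}_\bC \to E^{1,0}$ (which uses the isotropy of $E^{>0}$, exactly as in the proof of \autoref{lemma:dir_sum_orthog}) ensures the dimensions are right. Because $\{h(c_i)\}_{i=1}^g$ is Hermitian-orthonormal, the second half $\{h(c_{g-k+1}),\dots,h(c_g)\}$ is an orthonormal basis of the Hodge-orthogonal complement $(E^{>0}_{1,0})^\perp$, and conjugating gives an orthonormal basis $\{\overline{h(c_j)}\}_{j>g-k}$ of $(E^{>0}_{0,1})^\perp \subset E^{0,1}$.

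Third I would reinterpret the pairing $B_\omega(c_i,c_j) = \langle A_\omega h(c_i), h(c_j)\rangle$. Since $A_\omega h(c_i) \in E^{0,1}$ while $h(c_j) \in E^{1,0}$, the natural interpretation is the Hermitian pairing in $E^{0,1}$ after conjugating the second argument, so that $B_\omega(c_i,c_j) = \langle \sigma h(c_i), \overline{h(c_j)}\rangle$. For $i,j \in \{g-k+1,\dots,g\}$, the input $h(c_i)$ lies in $(E^{>0}_{1,0})^\perp$ and the test vector $\overline{h(c_j)}$ lies in $(E^{>0}_{0,1})^\perp$, so by Hermitian-orthonormality of the two conjugate bases only the component of $\sigma h(c_i)$ in $(E^{>0}_{0,1})^\perp$ contributes. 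Therefore $B_\omega(c_i,c_j)$ is exactly the $(i,j)$-matrix entry of $\sigma_{22}\colon (E^{>0}_{1,0})^\perp \to (E^{>0}_{0,1})^\perp$ in these orthonormal bases, and summing the squared absolute values yields the squared Hilbert-Schmidt norm $\tr(\sigma_{22}\sigma_{22}^\dag)$.

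The main obstacle is the bookkeeping in the first step: one must verify that the specific normalizations of the Hodge metric on real classes, of the metric on $E^{1,0}$, and of $h$ used to define $B_\omega$ in \cite{FMZ_bundles}, all align so that $\{h(c_i)\}$ is honestly orthonormal, with no stray factor of $\sqrt{2}$ appearing in the final identity. Once these conventions are pinned down, the proof is the four-line matching described above.
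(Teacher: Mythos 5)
Your proposal is correct and takes essentially the same approach as the paper's proof, which is a terse three-line version of exactly this argument: choose the Hodge-orthonormal basis $\{c_i\}$, observe that $\{h(c_i)\}$ and $\{\overline{h(c_i)}\}$ give bases of $E^{1,0}$ and $E^{0,1}$ respecting the block decompositions, and read off that $B_\omega(c_i,c_j)$ for $i,j > g-k$ are precisely the matrix entries of $\sigma_{22}$. The paper leaves implicit the dictionary between the real Hodge metric on the Lagrangian and the Hermitian metric on $E^{1,0}$ that you spell out in your first step (as well as the normalization point you flag), so your write-up is a faithful expansion rather than a new argument.
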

\begin{proof}
 Choose a real, Hodge-orthogonal basis $c_1,\ldots, c_g$ of a Lagrangian space as in the assumptions of the formula.
 This gives a basis for $E^{1,0}$ by taking the holomorphic representatives $h(c_i)$ and a basis for $E^{0,1}$ by $\conj{h(c_i)}$.
 
 The matrix elements of $\sigma$ for these bases are given by $B_\omega(c_i,c_j)$.
 This implies the formula.
\end{proof}

\begin{corollary}
\label{cor:sigma_phi_perp}
 Suppose that the cocycle has $2k$ zero exponents.
 Then for any vectors $\phi_i,\phi_j\in (E^{>0}_{1,0})^\perp$ we have
 $$
 \ip{\sigma \phi_i}{\conj{\phi_j}}=0
 $$
 Here $\ip{-}{-}$ denotes the Hodge inner product.
\end{corollary}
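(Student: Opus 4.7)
The plan is to apply the Forni partial-sum formula at two different truncation levels and compare. Recall that when there are $2k$ zero exponents, $\lambda_{g-k+1} = \cdots = \lambda_g = 0$, so the sum of the positive exponents equals the sum of the top $g$ exponents:
$$\sum_{i=1}^{g-k}\lambda_i \;=\; \sum_{i=1}^{g}\lambda_i.$$
On the left-hand side, the Forni formula as stated gives $\int \Phi_{g-k}(\omega,E^{>0})\,d\mu$. On the right-hand side, the same formula specialised to $k=0$ (applied to any $\mu$-measurable Lagrangian extension of $E^{>0}$ inside $E^{\geq 0}$, which exists because $E^0$ carries a non-degenerate symplectic form) reduces to the Kontsevich sum formula $\int(\Lambda_1+\cdots+\Lambda_g)\,d\mu$, since the sum $\sum_{i,j=g+1}^{g}|B_\omega^\bR(c_i,c_j)|^2$ is empty. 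Subtracting the two identities eliminates the $\Lambda_i$ terms.

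What remains is
$$\int \sum_{i,j=g-k+1}^{g} |B_\omega^\bR(c_i,c_j)|^2 \, d\mu \;=\; 0,$$
which by the lemma immediately preceding the corollary equals $\int \tr(\sigma_{22}\sigma_{22}^\dag)\,d\mu$. Since $\tr(\sigma_{22}\sigma_{22}^\dag)\geq 0$ pointwise, it vanishes for $\mu$-a.e.\ $\omega$, i.e.\ $\sigma_{22}\equiv 0$ almost everywhere.

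It remains to translate $\sigma_{22}=0$ into the pairing statement of the corollary. By definition $\sigma_{22}$ is the composition of $\sigma|_{(E^{>0}_{1,0})^\perp}$ with the Hodge-orthogonal projection from $E^{0,1}$ onto $(E^{>0}_{0,1})^\perp$. Complex conjugation is an anti-linear Hodge isometry that swaps $E^{1,0}$ and $E^{0,1}$ and preserves the real subbundle $E^{>0}_\bC$; consequently it identifies $(E^{>0}_{1,0})^\perp$ with $(E^{>0}_{0,1})^\perp$. Therefore, for $\phi_i,\phi_j\in (E^{>0}_{1,0})^\perp$, the vector $\conj{\phi_j}$ lies in $(E^{>0}_{0,1})^\perp$, and the Hodge inner product $\ip{\sigma\phi_i}{\conj{\phi_j}}$ reads off exactly the $(2,2)$-block of $\sigma$, which vanishes.

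I do not expect a substantive obstacle here: the calculational content has been packaged into the previous lemma, so the corollary is extracted by the variational (or rather, truncation) comparison of the Forni formula with Kontsevich's formula, plus the non-negativity of $\tr(\sigma_{22}\sigma_{22}^\dag)$. The only point requiring a line of care is the assertion $\overline{(E^{>0}_{1,0})^\perp} = (E^{>0}_{0,1})^\perp$, which follows from compatibility of Hodge-orthogonality with conjugation.
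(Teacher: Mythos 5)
Your proposal is correct and follows essentially the same reasoning the paper compresses into two sentences: you compare the Forni partial-sum formula at truncation level $g-k$ with the Kontsevich sum formula (the $k=0$ specialization), subtract, use that the resulting integrand $\tr(\sigma_{22}\sigma_{22}^\dag)$ is non-negative to get pointwise vanishing, and then translate $\sigma_{22}=0$ into the Hodge-inner-product statement via the observation that conjugation identifies $(E^{>0}_{1,0})^\perp$ with $(E^{>0}_{0,1})^\perp$. The last translation step is left implicit in the paper; you spell it out correctly.
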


The corollary follows since the sum of the first $g-k$ exponents already gives the sum of all the exponents.
Therefore the corresponding term in the formula vanishes pointwise.

\subsection{Proof of \autoref{prop:g_t-invar}}

All the considerations are along a fixed $g_t$-orbit for which the Oseledets theorem holds.
Recall we have the decomposition which is not $g_t$-invariant
$$
E = E^{<0}\oplus E^{\geq 0}_{F^1}\oplus E^{\geq 0 }_{\conj{F^1}} \oplus E^{>0}
$$
Recall the middle terms were defined by $E^{\geq 0}_{F^1}:= (E^0\oplus E^{>0})\cap F^1$ and its complex-conjugate version.

\begin{lemma}
The space $E^{\geq 0 }_{F^1}$ coincides with $(E^{>0}_{1,0})^{\perp}$ from \autoref{eqn:E_1001perp}.
\end{lemma}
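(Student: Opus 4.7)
I would combine a dimension count with a single inclusion. Both candidate spaces sit inside $F^1 = E^{1,0}$, so I work inside $E^{1,0}$ throughout.

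First I verify that both have complex dimension $k$. \autoref{lemma:dir_sum_orthog} already gives $\dim_\bC E^{\geq 0}_{F^1} = k$. For $(E^{>0}_{1,0})^\perp$, the isotropy argument used in \autoref{lemma:dir_sum_orthog} to show $F^1 \cap E^{>0}_\bC = 0$ applies verbatim with $F^1$ replaced by $\conj{F^1}$, so the projection $E^{>0}_\bC \to E^{1,0}$ is injective. Since $\dim_\bC E^{>0}_\bC = g-k$, this gives $\dim E^{>0}_{1,0} = g-k$ and hence $\dim (E^{>0}_{1,0})^\perp = k$ inside the $g$-dimensional space $E^{1,0}$. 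So it is enough to prove one containment.

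The heart of the argument is the inclusion $(E^{>0}_{1,0})^\perp \subseteq E^{\geq 0}_\bC$. Take $\alpha \in (E^{>0}_{1,0})^\perp \subset F^1$. By property (ii) of the Setup, $E^{\geq 0}_\bC$ is the symplectic orthogonal of $E^{>0}_\bC$, equivalently of the real subspace $E^{>0}$, so it suffices to show $\langle \alpha, c\rangle_{\mathrm{symp}} = 0$ for every real $c \in E^{>0}$. Decompose $c = \gamma + \conj{\gamma}$, where $\gamma \in E^{>0}_{1,0}$ is the $(1,0)$-projection of $c$. Because the wedge of two $(1,0)$-forms on a Riemann surface vanishes,
\[
\langle \alpha, c \rangle_{\mathrm{symp}} = \langle \alpha, \gamma \rangle_{\mathrm{symp}} + \langle \alpha, \conj{\gamma}\rangle_{\mathrm{symp}} = \langle \alpha, \conj{\gamma}\rangle_{\mathrm{symp}}.
\]
The remaining pairing between a $(1,0)$-class and a $(0,1)$-class is a nonzero scalar multiple of the Hodge inner product $\langle \alpha, \gamma \rangle_{\mathrm{Hodge}}$, which vanishes by the choice $\alpha \in (E^{>0}_{1,0})^\perp$. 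Combined with the dimension check this yields equality.

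I do not foresee a serious obstacle. The only point requiring care is that it suffices to test symplectic orthogonality against real vectors in $E^{>0}$ rather than arbitrary complex vectors in $E^{>0}_\bC$; this is justified because real vectors span $E^{>0}_\bC$ over $\bC$ and the symplectic form is $\bC$-bilinear after complexification.
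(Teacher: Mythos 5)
Your proof is correct and takes essentially the same approach as the paper: identifying the symplectic pairing of a $(1,0)$-class with a $(0,1)$-class as a nonzero multiple of the Hodge pairing, decomposing real elements of $E^{>0}$ by type, and combining one inclusion with the dimension count already implicit in \autoref{lemma:dir_sum_orthog}. The only cosmetic difference is that the paper proves the inclusion $E^{\geq 0}_{F^1} \subseteq (E^{>0}_{1,0})^\perp$ while you prove the reverse one; both close the gap via equality of dimensions.
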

\begin{proof}
 Recall $E^{\geq 0}_{F^1}=E^{\geq 0}\cap F^1$.
 Now $E^{\geq 0}$ is the same as the symplectic orthogonal of $E^{>0}$.
 Take a real class $c\in E^{>0}$ and decompose it according to $(1,0)$ and $(0,1)$ types
 $$
 c=\alpha\oplus \conj{\alpha}
 $$
 Take $\beta \in E^{\geq 0}\cap F^1$, then $(c,\beta)=0$ where $(,)$ is the symplectic pairing.
 Therefore $\alpha$ and $\beta$ are Hodge-orthogonal, for all $\alpha$ coming from a real class $c\in E^{>0}$.
 But these $\alpha$ will span $E^{>0}_{1,0}$, so $E^{\geq 0}_{F^1}$ is the subspace of the $(1,0)$ decomposition which is Hodge-orthogonal to $E^{>0}_{1,0}$.
\end{proof}

The above result implies that \autoref{cor:sigma_phi_perp} can be applied to elements of $E^{\geq 0}_{F^1}$.
For the computation, choose a Hodge orthonormal trivialization along a small piece of the orbit as follows:
\begin{enumerate}
 \item $c_1,\ldots, c_{g-k}$ are a basis of $E^{>0}$
 \item $\phi_{g-k+1},\ldots,\phi_g$ are a basis of $E^{\geq 0}_{F^1}$
 \item $\conj{\phi_{g-k+1}},\ldots,\conj{\phi_g}$ are conjugates of the preceding ones, and thus a basis of the conjugate subspace
\end{enumerate}

Recall from \autoref{lemma:dir_sum_orthog} that the decomposition
$$
E^{>0}\oplus E^{\geq 0}_{F^1}\oplus E^{\geq 0}_{\conj{F^1}}
$$
is Hodge-orthogonal.
To show that $E^{>0}\oplus E^{\geq 0}_{F^1}$ is $g_t$-invariant, it suffices to check that $\nabla^{GM}(E^{\geq 0}_{F^1})$ is perpendicular to $E^{\geq 0}_{\conj {F^1}}$.
Here $\nabla^{GM}$ denotes the Gauss-Manin connection.
We already know that $E^{>0}$ is invariant by Gauss-Manin, and so is $E^{\geq 0}$.

Now recall (see for example \cite[eqn. 3.4.1]{sfilip_ssimpleKZ}) the relation between the Gauss-Manin and Chern connections
$$
\nabla^{GM} = \nabla^{Ch} + \sigma + \sigma^\dag
$$
The Chern connection $\nabla^{Ch}$ preserves the $(p,q)$ type of forms, and $\sigma^\dag$ annihilates $(1,0)$-forms.
This implies that
$$
\nabla^{GM}\phi_i = \nabla^{Ch}\phi_i +\sigma \phi_i
$$
But if we take the inner product with $\conj{\phi_j}$, the first term on the right vanishes for type reasons.
The second term vanishes as a consequence of the Forni formula, i.e. \autoref{cor:sigma_phi_perp} gives
$$
\ip{\sigma \phi_i}{\conj{\phi_j}} = 0
$$
This implies $\ip{\nabla^{GM}\phi_i}{\conj{\phi_j}}=0$, which is what we wanted.
\hfill \qed

\subsection{Flatness of the refinement}

The $g_t$-invariance obtained can be improved, using the following result from \cite[Cor. 4.5]{EM}.
It is valid in a more general partially hyperbolic setting, see e.g. the work of Avila and Viana \cite{Avila_Viana_Invariance}.
The original idea goes back to Ledrappier \cite{Ledrappier}.

%
%
%

\begin{lemma}
 \label{lemma:unstable_flat}
 Suppose $M$ is a $g_t$-invariant subbundle of the KZ cocycle, or some piece thereof denoted $E$ (see \cite[Prop. 4.4]{EM} for the more general setting).
 Suppose further that for the Oseledets filtration $E^{\geq \bullet}$ on $E$, we have
 $$
 E^{\geq\lambda_k}\subsetneq M \subsetneq E^{\geq \lambda_{k+1}}
 $$
 with $\lambda_k>\lambda_{k+1}$.
 Then $M$ is flat along the unstable leaves of $g_t$.
\end{lemma}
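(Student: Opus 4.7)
The lemma follows from the general invariance principle for measurable $g_t$-invariant subbundles in partially hyperbolic dynamics, going back to Ledrappier \cite{Ledrappier} and formalized for cocycles by Avila--Viana \cite{Avila_Viana_Invariance}; the version directly applicable in this Teichm\"uller setting is \cite[Prop.~4.4]{EM}. The plan is to verify its hypotheses and then invoke it.

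First I would reduce to a quotient. Both $E^{\geq \lambda_k}$ and $E^{\geq \lambda_{k+1}}$ are themselves flat along the unstable leaves of $g_t$ under the Gauss--Manin connection (a standard consequence of the same invariance principle applied to the Oseledets filtration). Passing to
\[
  \bar E \;:=\; E^{\geq \lambda_{k+1}}/E^{\geq \lambda_k},
\]
the cocycle on $\bar E$ has a single Lyapunov exponent $\lambda_{k+1}$, and the image $\bar M := M/E^{\geq \lambda_k}$ is a measurable $g_t$-invariant subbundle of $\bar E$. Showing $M$ is unstable-flat reduces to showing $\bar M$ is.

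The heart of the argument is then the invariance principle applied to $\bar E$. Since $\bar E$ carries only one Lyapunov exponent, the cocycle is Lyapunov-regularly conformal on its fibers: there is no dynamically preferred proper subspace, so any measurable $g_t$-invariant section of the Grassmannian bundle of $\bar E$ must coincide with a flat (holonomy-invariant) object. Concretely, for a.e.\ pair $x,y$ on the same unstable leaf, one uses the equivariance
\[
  \bar U(x,y) \;=\; g_T\circ \bar U(g_{-T} x, g_{-T} y)\circ g_{-T}
\]
and lets $T\to\infty$. The base points $g_{-T}x,\,g_{-T}y$ converge exponentially fast, so $\bar U(g_{-T}x, g_{-T}y)\to \mathrm{Id}$ in the appropriate Grassmannian metric; combined with the spectral gap between $\lambda_k$ and $\lambda_{k+1}$ built into the reduction above, this forces $\bar U(x,y)\bar M_x = \bar M_y$. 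Flatness of $\bar M$, together with the already-known flatness of $E^{\geq \lambda_k}$, yields flatness of $M$ along unstable leaves.

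The main obstacle is making the Grassmannian convergence rigorous in the measurable setting: one has to disintegrate the ambient $\SL_2(\bR)$-invariant measure along unstable leaves and restrict to Oseledets-regular points of both $x$ and $y$ simultaneously, which is delicate in the non-compact Teichm\"uller setting. This is exactly what \cite[Prop.~4.4]{EM} and its corollary package, so in practice the proof reduces to checking the hypotheses of that result: ergodicity of $\mu$, existence of the Oseledets gap $\lambda_k>\lambda_{k+1}$, and the fact that the KZ cocycle admits flat Gauss--Manin holonomy along unstables (which follows because unstable leaves are parametrized by period coordinates, making the cocycle constant on them).
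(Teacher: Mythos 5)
The paper gives no proof of this lemma; it is stated as a direct citation to \cite[Cor.~4.5]{EM}, with Avila--Viana \cite{Avila_Viana_Invariance} and Ledrappier \cite{Ledrappier} cited as the general partially-hyperbolic background. Your proposal ultimately does the same thing (invoke \cite[Prop.~4.4]{EM}), but first attempts to sketch the underlying argument, which is a reasonable instinct. The reduction to the quotient $\bar E := E^{\geq\lambda_{k+1}}/E^{\geq\lambda_k}$ is sound: since $E^{\geq\lambda_k}\subset M\subset E^{\geq\lambda_{k+1}}$ and both extreme terms of the backward Oseledets filtration are already flat along unstable leaves, it suffices to treat $\bar M = M/E^{\geq\lambda_k}$ inside $\bar E$, which carries the single exponent $\lambda_{k+1}$.

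Where the sketch goes soft is the central step. You write that ``since $\bar E$ carries only one Lyapunov exponent, the cocycle is Lyapunov-regularly conformal on its fibers: there is no dynamically preferred proper subspace, so any measurable $g_t$-invariant section of the Grassmannian bundle of $\bar E$ must coincide with a flat (holonomy-invariant) object.'' A single exponent gives only subexponential distortion $\|g_t\|\cdot\|g_t^{-1}\|=e^{o(|t|)}$, not conformality; and ``no dynamically preferred proper subspace'' is precisely the conclusion you are after, not something that follows from the Oseledets spectrum being a point. A measurable invariant $\bar M$ \emph{is} such a preferred subspace until you prove it is holonomy-invariant. The subsequent push-back-in-time computation also does not close as written: the inner convergence $\bar U(g_{-T}x,g_{-T}y)\to\mathrm{Id}$ and the Lusin continuity of $\bar M$ give you smallness with \emph{no rate}, while the outer conjugation by $(g_T)_*$ introduces a (sub)exponential distortion, so the naive estimate does not yield $\bar U(x,y)\bar M_x=\bar M_y$. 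Closing this gap is exactly the delicate content of the Ledrappier/Avila--Viana/Eskin--Mirzakhani invariance principle (which proceeds via an entropy or martingale-type argument, not a pointwise continuity estimate). You recognize this honestly at the end and defer to \cite[Prop.~4.4]{EM}, so the proposal is consistent with the paper; but the ``conformality $\Rightarrow$ flatness'' shortcut should not be presented as if it carried the argument.
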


Now we combine the above lemma with \autoref{prop:g_t-invar}.

\begin{proposition}
\label{prop:unstable_F1}
 In the notation of the previous section, consider the subbundle $E^{>0}\oplus E^{\geq 0}_{F^1}$. 
 Then it is flat along a.e. unstable leaf of the \Teichmuller geodesic flow $g_t$.
\end{proposition}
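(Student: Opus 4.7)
The plan is to deduce the statement as a direct consequence of \autoref{lemma:unstable_flat} applied to the subbundle
$$
M := E^{>0}_\bC \oplus E^{\geq 0}_{F^1}
$$
of the complexified cocycle. Two hypotheses need to be checked: that $M$ is $g_t$-invariant, and that it sits strictly between two consecutive steps of the Oseledets filtration.

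First, $g_t$-invariance of $M$ is exactly the content of the Key Proposition (\autoref{prop:g_t-invar}): the bundle $E^{>0}$ is $g_t$-invariant for obvious reasons, and that proposition proves that $E^{\geq 0}_{F^1}$ can only move inside $E^{>0}$ under parallel transport along $g_t$.

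For the sandwich condition, we use that we are in the regime where zero exponents occur, so $k\geq 1$. Let $\lambda_k>0$ be the smallest strictly positive Lyapunov exponent on $E$ and let $\lambda_{k+1}=0$ be the next one. In the notation of \autoref{lemma:unstable_flat} we then have $E^{\geq \lambda_k}_\bC = E^{>0}_\bC$ and $E^{\geq \lambda_{k+1}}_\bC = E^{\geq 0}_\bC$. The inclusion $E^{>0}_\bC\subsetneq M$ is strict because $E^{\geq 0}_{F^1}$ has complex dimension $k\geq 1$ and meets $E^{>0}_\bC$ only at the origin: any common vector would lie in $F^1\cap E^{>0}_\bC$, which is trivial by the isotropy argument already used in \autoref{lemma:dir_sum_orthog}. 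The inclusion $M\subsetneq E^{\geq 0}_\bC$ is strict by dimension count:
$$
\dim_\bC M = (g-k)+k = g < g+k = \dim_\bC E^{\geq 0}_\bC.
$$

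With both hypotheses in place, \autoref{lemma:unstable_flat} yields flatness of $M$ along a.e. unstable leaf of the \Teichmuller flow, which is the desired conclusion. The only mild subtlety is checking that \autoref{lemma:unstable_flat}, although phrased for an $\bR$-piece of the KZ cocycle, applies to a complex subbundle of the complexification; this should be immediate, as unstable holonomy on $E_\bC$ is simply the complexification of the real holonomy and flatness statements transport through complexification without change. I do not anticipate any serious obstacle: the work has already been done in \autoref{prop:g_t-invar}, and the present proposition is essentially an upgrade of $g_t$-invariance to invariance under the unstable foliation via Ledrappier's/Avila--Viana-style arguments packaged in \autoref{lemma:unstable_flat}.
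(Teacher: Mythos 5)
Your proof is correct and takes essentially the same approach as the paper: both rely on $g_t$-invariance from the Key Proposition (\autoref{prop:g_t-invar}) together with the sandwich position between the consecutive Oseledets steps $E^{>0}$ and $E^{\geq 0}$, then invoke \autoref{lemma:unstable_flat}. The only cosmetic difference is that the paper phrases the application of the lemma through the quotient $E^{\geq 0}/E^{>0}$ (and then lifts flatness back using flatness of $E^{>0}$), whereas you apply the lemma directly to $M=E^{>0}\oplus E^{\geq 0}_{F^1}$.
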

\begin{proof}
 By \autoref{prop:g_t-invar}, the subbundle is $g_t$-invariant.
 Moreover, it is a refinement of the backwards Oseledets filtration:
 $$
 E^{>0}\subsetneq \left( E^{>0}\oplus E^{\geq 0}_{F^1} \right)\subsetneq E^{\geq 0} \subset E
 $$
 It also induces a refinement of the quotient bundle $E^{\geq 0}/ E^{>0}$.
 By \autoref{lemma:unstable_flat} we see that the induced filtration on the quotient is flat along the unstable leaves.
 Since $E^{>0}$ is also flat along unstable leaves, we get the desired conclusion. 
\end{proof}

\subsection{Restrictions on monodromy}

To get further information, we now use the holomophic properties of the subbundle $F^1:=H^{1,0}$ of the Hodge bundle.
The above argument applied to $\SL_2\bR$-invariant bundles, but now we restrict to flat pieces of the semisimple decomposition.

We work locally in period coordinates, given by $\bC^N=\bR^{2N}$ and denote points as $(x,y)$ to distinguish the real and imaginary parts.
We have some local chart $U\subset \bC^N$ and pick an Oseledets-generic point $(x_0,y_0)$ to which the above results apply.

The Oseledets theorem gives a filtration at $(x_0,y_0)$, which we refined using the previous results:
\begin{align}
\label{eq:filtration}
E^{>0}_{(x_0,y_0)}\subset \left( E^{>0}\oplus E^{\geq 0}_{F^1}\right)_{(x_0,y_0)} \subset E^{\geq 0}_{(x_0,y_0)}\subset E_{(x_0,y_0)}
\end{align}
We view the Hodge filtration $F^1$ as a holomorphic map into the Grassmanian
$$
F^1:U\to \Gr_g(E_\bC)
$$
Consider the locus of points where $F^1$ intersects the spaces from \eqref{eq:filtration}, extended to the neighborhood using the flat connection:
$$
Z:=\left\lbrace (x,y)\in U : \dim\left(F^1(x,y)\cap \left( E^{>0}\oplus E^{\geq 0}_{F^1}\right)_{(x_0,y_0)} \right)\geq k  \right\rbrace
$$
Here $2k$ is the number of zero exponents of the KZ cocycle.

\begin{proposition}
 The locus $Z$ is all of $U$.
\end{proposition}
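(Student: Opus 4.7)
The plan is to show that $Z$ is a complex-analytic subvariety of $U$ which contains a totally real submanifold of maximal real dimension, and then invoke the identity principle to conclude $Z = U$.

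First, observe that $Z$ is a complex-analytic subset of $U$. The Hodge bundle $F^1$ varies holomorphically in $U$, and the subspace $W_{(x_0,y_0)} := (E^{>0}\oplus E^{\geq 0}_{F^1})_{(x_0,y_0)}$ extends by the flat Gauss--Manin connection to a flat (and in particular holomorphic in period coordinates) subbundle $W$ of constant dimension over $U$. The condition $\dim(F^1(x,y)\cap W_{(x,y)}) \geq k$ is then the simultaneous vanishing of appropriate minors of a holomorphic matrix built from local frames, so $Z$ is cut out by holomorphic equations.

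Next, the subbundle $E^{>0}\oplus E^{\geq 0}_{F^1}$ is flat along the unstable leaf $L^u$ of $g_t$ through $(x_0,y_0)$, by \autoref{prop:unstable_F1}. In period coordinates on $\cM$, the flow $g_t$ acts by $(x,y)\mapsto (e^t x, e^{-t}y)$, so $L^u$ is (locally) the real slice $\{(x,y_0):x\in\bR^N\}\cap U$, a totally real submanifold of real dimension $N=\dim_\bC\cM$. At each point $(x,y_0)\in L^u$, flatness along $L^u$ identifies $(E^{>0}\oplus E^{\geq 0}_{F^1})_{(x,y_0)}$ with $W_{(x,y_0)}$, while by its very definition $E^{\geq 0}_{F^1}(x,y_0)\subset F^1(x,y_0)$ has complex dimension $k$. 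Hence $\dim(F^1(x,y_0)\cap W_{(x,y_0)})\geq k$ and $L^u\subset Z$.

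Finally, a closed complex-analytic subset of an open set in $\bC^N$ which contains an open piece of a totally real $N$-dimensional submanifold must be the whole open set: in local coordinates where $L^u$ is $\{\mathrm{Im}\, z = y_0\}$, any holomorphic function vanishing on $L^u$ has vanishing Taylor series at $(x_0,y_0)$ and is therefore identically zero. Applying this to the holomorphic equations cutting out $Z$ gives $Z=U$. The only point requiring care is that $L^u$ has real dimension equal to the complex dimension of $U$, which is where the specific form of the $g_t$-action in period coordinates on the affine invariant manifold $\cM$ enters; this is the step I expect to be the main book-keeping obstacle, especially to make sure $L^u$ stays inside the chart $U$ and that the flatness statement from \autoref{prop:unstable_F1} applies along the entire local leaf, not only almost everywhere.
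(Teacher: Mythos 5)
Your overall plan is the right one, and the first and third paragraphs are essentially correct (holomorphicity of the defining equations for $Z$, and the identity-principle argument). But the middle step contains a genuine gap, precisely at the point you flag as "the main book-keeping obstacle."

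The unstable leaf $L^u$ of the flow $g_t$ through an Oseledets-generic point $(x_0,y_0)$ does \emph{not} fill out the whole real slice $\{(x,y_0):x\in\bR^N\}$. The flow preserves the area pairing $\omega(x,y)$, the invariant measure lives on the unit-area locus, and the unstable manifold there is cut out by the linearized area constraint: $L^u=\{(x_0+v,y_0):\omega(v,y_0)=0\}$. This is a totally real submanifold of real dimension $N-1$, not $N$. Consequently \autoref{prop:unstable_F1} and the identity principle only give that $Z$ contains a $\bC^{N-1}$ of the form $\{(x_0+v,y_0+v'):\omega(v,y_0)=\omega(v',y_0)=0\}$, which is one complex dimension short of $U$. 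Your argument as written does not show that the direction $(v,0)$ with $\omega(v,y_0)\neq 0$ lies in $Z$.

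The paper closes exactly this gap with an additional observation you do not use: $Z$ is invariant under the natural $\bC^\times$-scaling action on $U\subset\bC^N$, because the Hodge filtration $F^1$ depends only on the underlying Riemann surface and hence is $\bC^\times$-invariant, while the comparison subspace $(E^{>0}\oplus E^{\geq 0}_{F^1})_{(x_0,y_0)}$ is fixed. Since $\omega(x_0,y_0)\neq 0$, the two real tangent directions of the $\bC^\times$-orbit through $(x_0,y_0)$ are transverse to the $\bC^{N-1}$ obtained above, and together they sweep out all of $U$. Incorporating this $\bC^\times$-invariance (or some equivalent supplementary argument for the missing direction) is necessary to complete your proof.
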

\begin{proof}
 The locus $Z$ is a closed complex-analytic subset of $U$, since it is given by the vanishing of holomophic functions.
 Moreover, it is invariant under the natural scaling action of $\bC^\times$ on $U\subset \bC^N$, since the Hodge filtration $F^1$ is, while the space $\left( E^{>0}\oplus E^{\geq 0}_{F^1}\right)_{(x_0,y_0)} $ is fixed in the argument.
 
 \autoref{prop:unstable_F1} implies that $Z$ contains the unstable leaf through $(x_0,y_0)$.
 Indeed, $F^1(x,y)$ intersects $\left( E^{>0}\oplus E^{\geq 0}_{F^1}\right)_{(x,y)}$ by construction;
 but on the fixed unstable leaf, the second space agrees with the fixed space $\left( E^{>0}\oplus E^{\geq 0}_{F^1}\right)_{(x_0,y_0)}$.
 
 The unstable leaf is the set of points of the form $(x_0+v,y_0)$ where $\omega(v,y_0)=0$ with $\omega$ the symplectic pairing.
 This implies that $Z$ contains a $\bC^{N-1}$ of the form
 $$
 (x_0+v,y_0+v') \textrm{ where } \omega(v,y_0)=\omega(v',y_0)=0
 $$
 Indeed, if a holomophic function vanishes on $\bR^{N-1}\subset \bC^{N-1}$, it vanishes on all of $\bC^{N-1}$ (consider the power series expansion).
 
 Because the $\bC^{N-1}$ we obtained is transverse to the $\bC^\times$-action, we find that $Z$ is all of $U$.
\end{proof}

So for a generic $(x_0,y_0)$ the spaces $\left( E^{>0}\oplus E^{\geq 0}_{F^1}\right)_{(x_0,y_0)}$ and $F^1(x,y)$ intersect in dimension at least $k$, for all $(x,y)$ near $(x_0,y_0)$.

\begin{proposition}
 For all $(x,y)$ near $(x_0,y_0)$ as above, $F^1(x,y)$ intersects in dimension at least $k$ the images of $\left( E^{>0}\oplus E^{\geq 0}_{F^1}\right)_{(x_0,y_0)}$ under the monodromy group.
\end{proposition}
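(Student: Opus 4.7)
The plan is to use the previous proposition on a suitable cover of $\cM$, invoke the identity principle for closed complex-analytic subsets to globalize, and then exploit the equivariance of the Hodge filtration to extract the statement for all monodromy translates.

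Pass to the cover $\pi : \widetilde{\cM} \to \cM$ on which the local system $E$ has trivial monodromy; the universal cover suffices. Fix a lift $\widetilde{p}_0$ of $(x_0, y_0)$, and use the canonical flat trivialization to write $\widetilde{E} := \pi^\ast E \cong \widetilde{\cM} \times V$ with $V := E_{(x_0, y_0)}$. The Hodge filtration lifts to a holomorphic map $\widetilde{F}^1 : \widetilde{\cM} \to \Gr_g(V)$, and for $g \in \Gamma := \pi_1(\cM, (x_0, y_0))$ one has the equivariance $\widetilde{F}^1(g \cdot \widetilde{p}) = \rho(g^{-1}) \widetilde{F}^1(\widetilde{p})$, where $\rho$ is the monodromy representation.

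Set $W := \bigl(E^{>0} \oplus E^{\geq 0}_{F^1}\bigr)_{(x_0, y_0)} \subset V$ and consider
\[
\widetilde{Z} := \Bigl\{ \widetilde{p} \in \widetilde{\cM} : \dim_\bC \bigl(\widetilde{F}^1(\widetilde{p}) \cap W\bigr) \geq k \Bigr\}.
\]
This is a closed complex-analytic subset, cut out by rank conditions on the holomorphic map $\widetilde{F}^1(\widetilde{p}) \oplus W \to V$. The previous proposition, applied on the lift of the period-coordinate chart $U$ through $\widetilde{p}_0$, shows that $\widetilde{Z}$ has non-empty interior. Since $\widetilde{\cM}$ is connected, the identity principle for closed analytic subsets forces $\widetilde{Z} = \widetilde{\cM}$.

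For any $g \in \Gamma$ and $\widetilde{p} \in \widetilde{\cM}$, membership $g \cdot \widetilde{p} \in \widetilde{Z}$ together with the equivariance gives
\[
k \leq \dim\bigl(\widetilde{F}^1(g \cdot \widetilde{p}) \cap W\bigr) = \dim\bigl(\widetilde{F}^1(\widetilde{p}) \cap \rho(g) W\bigr),
\]
and projecting back to $U \subset \cM$ via the flat trivialization identifying $E_{(x, y)}$ with $V$ yields the claim. The conceptual heart is the identity principle; the remaining steps are routine bookkeeping, with the one subtle point being keeping conventions straight when passing between the equivariance on $\widetilde{\cM}$ and the local flat trivialization on $\cM$, though this affects only which element of $\Gamma$ is named, not the truth of the claim across the whole group.
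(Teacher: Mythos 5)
Your argument is correct and is essentially the same as the paper's: both rest on the fact that the intersection locus is a closed complex-analytic set with nonempty interior (established by the previous proposition), which must therefore be everything, and then on how the flat subspace $W = \bigl(E^{>0}\oplus E^{\geq 0}_{F^1}\bigr)_{(x_0,y_0)}$ gets transformed by the monodromy as one continues analytically. The paper phrases this as analytic continuation of the intersection condition along a loop $\gamma$ based at $(x_0,y_0)$, after which the flat parallel transport of $W$ returns as $\rho(\gamma)W$; you formalize the same idea by lifting to the universal cover, globalizing the locus to $\widetilde{Z}$, and applying the identity principle once there before reading off the conclusion from deck-transformation equivariance. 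Yours is a cleaner and more complete write-up of the same underlying mechanism.
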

\begin{proof}
 Pick a loop $\gamma$ in $\cM$ starting at $(x_0,y_0)$ and let $\rho(\gamma)$ be the corresponding monodromy matrix in $E$.
 By analytic continuation, $F^1$ and the (flat) parallel transport of $\left( E^{>0}\oplus E^{\geq 0}_{F^1}\right)_{(x_0,y_0)}$ along $\gamma$ must also intersect.
 But when $\gamma$ returns to $(x_0,y_0)$ we find that $\left( E^{>0}\oplus E^{\geq 0}_{F^1}\right)_{(x_0,y_0)}$ changed to $\rho(\gamma)\left( E^{>0}\oplus E^{\geq 0}_{F^1}\right)_{(x_0,y_0)}$.
\end{proof}

\begin{corollary}
\label{corollary:Zariski}
 Let $G$ be the Zariski closure of the monodromy of $E$ and let $(x,y)$ be near $(x_0,y_0)$.
 Then $\forall g\in G$ we have that $F^1(x,y)$ intersects $g\cdot \left( E^{>0}\oplus E^{\geq 0}_{F^1}\right)_{(x_0,y_0)}$ in dimension at least $k$.
\end{corollary}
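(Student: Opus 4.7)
The plan is to combine the preceding proposition with the fact that the intersection-dimension condition is Zariski-closed. Fix a point $(x,y)$ near $(x_0,y_0)$ and set $W := \left( E^{>0}\oplus E^{\geq 0}_{F^1}\right)_{(x_0,y_0)}$, regarded as a fixed linear subspace of $E_\bC$. Consider the set
\[
Z_{(x,y)} := \bigl\{\, g \in G : \dim\bigl(F^1(x,y) \cap g\cdot W\bigr) \geq k\, \bigr\}.
\]
The claim to be proved is exactly that $Z_{(x,y)} = G$.

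The first step is to argue that $Z_{(x,y)}$ is Zariski-closed in $G$. This is standard: the condition $\dim(F^1(x,y) \cap g \cdot W) \geq k$ is equivalent to the addition map $F^1(x,y) \oplus g\cdot W \to E_\bC$ having rank at most $\dim F^1(x,y) + \dim W - k$, which is cut out by the vanishing of suitable minors and is therefore algebraic in the matrix entries of $g$. Equivalently, one may identify $Z_{(x,y)}$ with the preimage in $G$ of a Schubert-type closed subvariety of the relevant Grassmannian.

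The second step is to invoke the previous proposition, which shows that $Z_{(x,y)}$ contains $\rho(\gamma)$ for every loop $\gamma$ based at $(x_0,y_0)$; that is, $Z_{(x,y)}$ contains the full image of the monodromy representation. Since $G$ is by definition the Zariski closure of that image and $Z_{(x,y)}$ is Zariski-closed, we conclude $Z_{(x,y)} = G$, which is the desired statement.

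The argument is essentially a packaging step and I do not anticipate any real obstacle: the only item meriting verification is the Zariski-closedness of the intersection-dimension condition, which is a routine instance of the semicontinuity of rank. All the substantive work — the existence of the refined $g_t$-invariant filtration, its flatness along unstable leaves, and the propagation to a full neighborhood in $U$ — has already been carried out in \autoref{prop:g_t-invar}, \autoref{prop:unstable_F1}, and the preceding proposition.
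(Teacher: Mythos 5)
Your argument is exactly the paper's: the intersection-dimension condition is Zariski-closed, it holds on the image of the monodromy by the preceding proposition, and hence it extends to the Zariski closure $G$. You've merely spelled out the routine semicontinuity step in more detail than the paper does.
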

This follows since the condition of intersection is Zariski-closed.
It also holds for both the real and complex Zariski closures.

\section{Classifying the monodromy}
\label{sec:monodromy}

In \autoref{subsec:real_rk_zero_exp} we analyze (in the abstract algebraic setting) restrictions on monodromy given by \autoref{corollary:Zariski}.

\subsection{Real rank and zero exponents}
\label{subsec:real_rk_zero_exp}

We consider a real semisimple algebraic group $G$ acting on a real vector space $E_\bR$ of dimension $2g$, preserving a symplectic form.
After complexifying, assume we have a decomposition $E_\bC=F^1\oplus \conj{F^1}$ where $F^1$ is a Lagrangian subspace.
Moreover, for the symplectic pairing we have $\sqrt{-1}(\alpha,\conj{\alpha})>0$ for all $\alpha\in F^1$.

Following the conclusion of \autoref{corollary:Zariski}, we assume there exists a Lagrangian $L\subset E_\bC$ such that $\forall g\in G(\bC)$ we have
$$
\dim \left((g\cdot L) \cap F^1\right)\geq k
$$
Moreover, we have a subspace $L_0\subset L$ defined over $\bR$ and of dimension $g-k$.
Here $L$ plays the role of the space $\left( E^{>0}\oplus E^{\geq 0}_{F^1}\right)_{(x_0,y_0)}$ and $L_0$ of its subspace $E^{>0}_{(x_0,y_0)}$.
We assume $L_0$ is isotropic and defined over $\bR$, and $L$ is Lagrangian.

Let $A\subset G$ be a maximal real split torus, i.e. a subgroup of maximal dimension isomorphic to $\bR^\times$ to some power.
We can consider the weight space decomposition of $E_\bR$ for the action of this torus, i.e. the eigenvalues which can occur.
These are viewed as elements of the dual of the Lie algebra of $A$.

\begin{proposition}
\label{prop:real_rk}
 Under the above assumptions, the weights of $E_\bR$ with respect to the action of $A$ contain zero with multiplicity at least $2k$.
\end{proposition}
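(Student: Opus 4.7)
The plan is to degenerate $L$ under the torus action in order to reduce to an $A$-invariant Lagrangian, perform a weight-space dimension count, and finally use the Hodge positivity of $F^{1}$ to concentrate the $k$-dimensional intersection in the zero-weight piece.

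First, I would pick a regular one-parameter subgroup $a_t = \exp(tX) \subset A$ with $X$ in the interior of a Weyl chamber of $\mathfrak{a}$. The Lagrangian Grassmannian of $E_\bC$ is projective, so along a subsequence $a_t \cdot L$ converges to an $A$-invariant Lagrangian $L^{\infty}$, and the real isotropic subspace $L_0$ admits a corresponding subsequential limit $L_0^{\infty} \subset L^{\infty}$ that is $A$-invariant, real, and of dimension $g - k$. The Schubert locus $\{F : \dim(F \cap F^{1}) \geq k\}$ is Zariski closed, so the closure $\overline{G(\bC)\cdot L}$ is contained in it; in particular $g \cdot L^{\infty} \in \overline{G(\bC)\cdot L}$ for every $g \in G(\bC)$, so the hypothesis of \autoref{corollary:Zariski} survives the passage to the limit. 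We may therefore replace $(L, L_0)$ by $(L^{\infty}, L_0^{\infty})$ and assume outright that $L$ is $A$-invariant.

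Next, I would perform the weight-space bookkeeping. Write $L = \bigoplus_\alpha L_\alpha$ and $L_0 = \bigoplus_\alpha L_{0,\alpha}$, each $L_{0,\alpha}$ real. Because $\omega$ pairs $E_\alpha$ with $E_{-\alpha}$ nondegenerately for $\alpha \neq 0$ and restricts to a symplectic form on $E_0$, the Lagrangian condition forces
\[
\dim L_\alpha + \dim L_{-\alpha} = \dim E_\alpha \quad (\alpha \neq 0), \qquad \dim(L \cap E_0) = g - r,
\]
where $r = \dim_\bR E^{+}$ is the real rank. In particular, the goal $\dim E_0 \geq 2k$ is equivalent to showing $\dim(L \cap E_0) \geq k$.

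Finally, I would apply the hypothesis with $g = a_t$, so that $\dim(L \cap a_t^{-1} F^{1}) \geq k$ for all $t$, and pass to a subsequential limit $\widetilde{F}$ of $a_t^{-1} F^{1}$ as $t \to +\infty$. Then $\widetilde{F}$ is an $A$-invariant Lagrangian with $\dim(L \cap \widetilde{F}) \geq k$, and since both sides are $A$-invariant the intersection decomposes weight-by-weight:
\[
\dim(L \cap \widetilde{F}) = \sum_\alpha \dim(L_\alpha \cap \widetilde{F}_\alpha) \geq k.
\]
The main obstacle — and the decisive step — is to show that only the summand $\alpha = 0$ contributes. Informally, the Hodge-Riemann positivity $i\omega(v, \bar v) > 0$ on $F^{1}$ degenerates to $\geq 0$ on the boundary point $\widetilde{F}$ of the Siegel domain of positive Lagrangians; because nonzero weight vectors automatically satisfy $\omega(v, \bar v) = 0$ (the restricted weights are real, so $\overline{E_\alpha} = E_\alpha$, but $\omega$ pairs $E_\alpha$ only with $E_{-\alpha}$), they must be flushed out of $\widetilde{F}$ by combining the surviving semi-positivity with the Lagrangian identity $\dim \widetilde{F}_\alpha + \dim \widetilde{F}_{-\alpha} = \dim E_\alpha$ and the real structure $L_0 \subset L$. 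Carrying this out — possibly by iterating the argument using Weyl-group translates $w \cdot \widetilde{F}$ and exhausting all chambers — yields $L_\alpha \cap \widetilde{F}_\alpha = 0$ for $\alpha \neq 0$. Granted this, $\dim(L \cap E_0) \geq k$, and combined with $\dim(L \cap E_0) = g - r$ we obtain $r \leq g - k$, i.e.\ $\dim E_0 = 2(g-r) \geq 2k$, as required.
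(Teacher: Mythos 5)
Your proof has the right skeleton and the right first move, but it leaves the decisive step unproven, and the route you sketch for it is harder than the one the paper uses.

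Your Step 1 (degenerating $L$ to an $A$-invariant Lagrangian by flowing along a regular one-parameter subgroup and taking a subsequential limit, noting that the Schubert condition is closed and $G(\bC)$-stable) is essentially the paper's Step 1; the paper packages it as the Borel fixed point theorem for the split solvable group $A$ acting on the orbit closure $\overline{A\cdot L}$, which is tidier and avoids having to argue that a limit along one regular one-parameter subgroup is fixed by all of $A$. Your weight-space bookkeeping is correct: an $A$-invariant Lagrangian satisfies $\dim L_\alpha + \dim L_{-\alpha} = \dim E_\alpha$ for $\alpha \neq 0$ and $\dim(L\cap E_0) = \tfrac12\dim E_0$, so the target inequality $\dim E_0 \geq 2k$ is equivalent to $\dim(L\cap E_0)\geq k$.

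The genuine gap is the step you yourself flag with ``Granted this.'' You pass to a limit $\widetilde F$ of $a_t^{-1}F^1$ and then need to show the $k$-dimensional intersection $L\cap\widetilde F$ lives entirely in the zero-weight piece. Your argument observes that Hodge--Riemann positivity degenerates to semi-positivity on the boundary $\widetilde F$ and asserts vaguely that nonzero-weight vectors are ``flushed out,'' perhaps after iterating over Weyl chambers. But semi-positivity plus the Lagrangian identity and the real structure of $L_0$ does not obviously force $L_\alpha\cap\widetilde F_\alpha=0$ for $\alpha\neq 0$ --- you have given no mechanism that excludes a null vector of $\widetilde F$ sitting in a nonzero weight space and meeting $L$. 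The paper avoids this entirely by never taking a limit of $F^1$. Instead it works with the pseudo-hermitian form $h$ preserved by all of $G$: since $L_0$ is isotropic and $L$ is Lagrangian, $L_0 \subset \operatorname{Rad}(h|_L)$, while $h$ is \emph{strictly} positive on $L\cap F^1$; combined with $L = L_0 \oplus (L\cap F^1)$ this makes $h|_L$ positive semi-definite with radical exactly $L_0$. Now the $A$-invariance does all the work: $A$ preserves $h$, $L$, and $L_0 = L\cap\overline{L}$, so if some nonzero restricted weight $\alpha$ contributed to $L$ outside $L_0$, one would have a vector $v$ with $h(v,v)>0$ and a one-parameter subgroup $a_t\subset A$ with $a_t v\to 0$, contradicting $h(a_tv,a_tv)=h(v,v)$. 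Hence $A$ acts trivially on $L\cap F^1$ (and, by conjugation, on $\overline{L}\cap\overline{F^1}$, disjoint from the first), giving the $2k$-dimensional zero-weight space directly. To repair your proof, you should drop the limit $\widetilde F$, identify the positive-definiteness of $h$ on $L\cap F^1$ and the containment $L_0\subset\operatorname{Rad}(h|_L)$, and run this scaling contradiction.
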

\begin{proof}
 \textbf{Step 1:} We can assume that $A$ fixes $L$ as a subspace of $E_\bC$.
 
 Indeed, consider the action of $A$ on the orbit closure $\closure{A\cdot L}$ inside the Grassmanian of $E_\bC$.
 The closure is for the Zariski topology, and we view it as an $\bR$-algebraic variety via Weil restriction of scalars.
 Now the Borel fixed point theorem applies, see \cite[14.1.7]{Springer} for the case of split solvable groups, without assuming an algebraically closed field.
 
 So there exists a fixed point under the action of $A$ and we can assume this is our $L$.
 The dimension of intersection with $F^1$ did not drop.
 
 \noindent\textbf{Step 2:} We show $A$ acts trivially on $L\cap F^1$, which is of dimension at least $k$.
 This will suffice, since then $A$ also acts trivially on its complex conjugate.
 Therefore $A$ acts trivially on a $2k$-dimensional space.
 
 To prove the assertion, note that $L_0\subset L$ is preserved by $A$.
 Indeed, we have that $L_0=L\cap \conj{L}$ and both spaces are preserved by $A$, since $A$ is a real torus.
 
 Next, all of $G$ and $A$ in particular preserves the pseudo-hermitian form on $E_\bC$, denoted by $h$.
 By the isotropy properties of our subspaces, we see that $L_0$ is in the radical $\Rad(h\left. \right|_L)$ of $h$ restricted to $L$.
 On $L\cap F^1$ the pseudo-hermitian form $h$ is positive definite, and we have $L=L_0\oplus (L\cap F^1)$.
 
 Consider now the weight decomposition of $L$ with respect to $A$, possible since $L$ is $A$-invariant.
 
 By contradiction, assume there exists a non-trivial weight space outside of $L_0$ and take a vector $v\neq 0$ in it.
 Then $h(v,v)>0$.
 However, there exists a $1$-parameter subgroup $\{a_t\}$ in $A$ such that
 $$
 \lim_{t\to \infty} a_t v =0
 $$
 But $a_t$ preserves the pseudo-hermitian norm of $v$ and this is a contradiction.
\end{proof}

\subsection{Classifying the groups in the algebraic hull}
\label{sec:alg_hull_classification}

In this section, we describe the possible semisimple Lie groups which arise in irreducible (over $\bR$) pieces of the monodromy.
To this end, we work with a general variation of Hodge structures $E$ of weight $1$ over a quasi-projective base $M$.
Let $\Gamma\subset \GL(E_\bZ)$ be the monodromy group and $G:=\closure{\Gamma}\subset \GL(E_\bR)$ the Zariski closure of the monodromy.
Then $G$ is a real semisimple algebraic group, and let $G^\circ$ be the connected component of the identity.

Choose now some $x\in M$.
Then we have a Hodge structure on the fiber $E_x$ of $E$ over $x$, and it has a corresponding Mumford-Tate group $\MT(E_x)$.
More details can be found, for instance, in \cite[Section 3]{DMOS_Deligne}.

One possible definition of the Mumford-Tate group is as follows.
The Hodge structure on $E_x$ is given by a representation of the Deligne torus $h:\bS\to \GL(E_x)$ (see \cite{Deligne_travaux} for this point of view).
Then $\MT(E_x)$ is the smallest $\bQ$-algebraic group which contains the image of $\bS$.
It is a reductive $\bQ$-algebraic group.
The relevant fact for us is the next result (\cite[Prop. 7.5]{Deligne_K3}, see also \cite[Thm. 1]{AndreMTgps}).

\begin{proposition}
 For a generic $x\in M$ (outside a countable collection of proper analytic subsets) we have that $G^\circ\normal \MT(E_x)$.
 In other words, the connected component of the identity of $G$ is a normal subgroup of the Mumford-Tate group.
 
 An equivalent statement is that after passing to a finite cover, the Zariski closure of the monodromy is a normal factor of the Mumford-Tate group.
\end{proposition}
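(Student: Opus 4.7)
The plan is to combine Deligne's theorem of the fixed part with a genericity argument, in the spirit of \cite{Deligne_K3, AndreMTgps}. The tannakian formalism — subgroups of $\GL$ reconstructed from their invariants in tensor constructions — is used throughout.

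First I would invoke the theorem of the fixed part: for every tensor construction $T = E^{\otimes a}\otimes (E^*)^{\otimes b}$, the subspace $T_x^{\Gamma}$ of monodromy-invariants (equivalently, globally flat sections of $T$) underlies a sub-Hodge structure of the tensor space $T_x$. Consequently, at every $x$ the Deligne torus $h_x\colon \mathbb{S}\to \GL(T_x)$ preserves $T_x^{\Gamma}$, and hence so does the Mumford-Tate group $\MT(E_x)$.

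To obtain the inclusion $G^\circ \subseteq \MT(E_x)$ for very general $x$, I would use that outside a countable union of proper analytic subsets of $M$ — the Noether--Lefschetz locus where extra Hodge classes appear — every Hodge tensor at $x$ is the value of a global flat section of the corresponding tensor bundle. Thus at such $x$, the $\MT(E_x)$-invariants in any tensor representation (by definition the Hodge tensors) are $\Gamma$-invariant and in particular fixed by $G^\circ$. Chevalley's theorem, realizing algebraic subgroups of $\GL$ as stabilizers of lines in tensor representations, then yields $G^\circ \subseteq \MT(E_x)$.

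For the normality, I would apply the theorem of the fixed part to $G^\circ$ itself: for each tensor representation $T$, the subspace $T_x^{G^\circ}$ is preserved by $h_x(\mathbb{S})$. By Chevalley applied to an auxiliary representation whose stabilizer of a line cuts out $G^\circ$, any element of $\GL(E_x)$ which preserves $T_x^{G^\circ}$ in every tensor representation must normalize $G^\circ$. Hence $h_x(\mathbb{S})\subseteq N(G^\circ)$. Since $N(G^\circ)$ is an algebraic subgroup and $\MT(E_x)$ is by definition the smallest $\mathbb{Q}$-algebraic subgroup whose real points contain $h_x(\mathbb{S})$, we conclude $\MT(E_x)\subseteq N(G^\circ)$, i.e.\ $G^\circ \normal \MT(E_x)$. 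The reformulation in terms of finite covers follows by replacing $M$ by a finite \'etale cover that kills the component group of $\Gamma$, so that the Zariski closure of the new monodromy becomes exactly $G^\circ$; normality in the semisimple group $\MT$ then forces it to be a direct factor.

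The main obstacle is the density step used to get $G^\circ\subseteq \MT(E_x)$: showing that at a very general $x$ every Hodge tensor is globally flat. This is where the countable collection of exceptional subvarieties enters — one needs that for each fixed nonzero flat section of a tensor bundle, the locus of points at which it is of pure Hodge type is a proper analytic subset, and then one sums over the countably many tensor representations. Once this input is in place, the remaining tannakian bookkeeping for both the inclusion and the normality is routine.
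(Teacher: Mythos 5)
The paper itself does not prove this proposition: it is quoted verbatim from Deligne \cite[Prop.~7.5]{Deligne_K3} and Andr\'e \cite[Thm.~1]{AndreMTgps}, and the text cites those sources. Your sketch reconstructs the standard Tannakian argument from that literature, and its first half — combining the theorem of the fixed part with the fact that, away from a countable union of proper analytic Noether--Lefschetz loci, every Hodge tensor is the value of a global flat section, so that $\MT(E_x)$-invariant tensors are $G^\circ$-invariant and hence (since $\MT$ is reductive) $G^\circ\subseteq\MT(E_x)$ — is essentially correct.

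The normality step, as you have written it, has a gap. You assert that by Chevalley, ``any element of $\GL(E_x)$ which preserves $T_x^{G^\circ}$ in every tensor representation must normalize $G^\circ$.'' Chevalley's stabilizer-of-a-line statement does not give this by itself: if $G^\circ$ is the stabilizer of a line $L\subset W$, then $L$ need not lie in $W^{G^\circ}$ (the group stabilizes $L$, it does not fix it pointwise), so preserving $W^{G^\circ}$ says nothing directly about $L$. What makes the step valid is the \emph{reductivity} of $G^\circ$: for a reductive $H\subset\GL(V)$, one splits off an $H$-stable complement $L'$ of $L$, notes that the projector $p\in\mathrm{End}(W)$ onto $L$ along $L'$ is an $H$-fixed tensor, and concludes that $H$ is recovered from its invariant tensors; then $h_x(\mathbb{S})\cdot T^{G^\circ}=T^{G^\circ}$ for all $T$ gives $T^{h G^\circ h^{-1}}=T^{G^\circ}$ and hence $hG^\circ h^{-1}=G^\circ$. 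But the reductivity (in fact semisimplicity) of the connected monodromy group $G^\circ$ of a polarizable VHS over a quasi-projective base is itself a nontrivial theorem of Deligne, and your proof needs to cite it as an input. It is also the cleaner route to the conclusion: semisimplicity of $G$ gives a flat complement to $\mathfrak{g}:=\operatorname{Lie}G^\circ$ inside $\mathrm{End}(E)$, so the projector onto $\mathfrak{g}$ is a flat global section of $\mathrm{End}(\mathrm{End}(E))$, hence of Hodge type $(0,0)$ by the theorem of the fixed part; therefore $h_x(\mathbb{S})$ preserves $\mathfrak{g}_x$ under $\operatorname{Ad}$, i.e.\ $h_x(\mathbb{S})\subset N(G^\circ)$, and since $N(G^\circ)$ is $\mathbb{Q}$-algebraic the minimality of $\MT(E_x)$ gives $\MT(E_x)\subseteq N(G^\circ)$.
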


Therefore, in order to list the possibilities for $G^\circ$, we need the possible real semisimple subgroups of the Mumford-Tate group of a Hodge structure of weight $1$.
Consider the action of $\MT(E_x)$ on $E_x$ and its decomposition into $\bR$-irreducible factors, and take one such, denoted again by $G$.
Then $G$ is an almost direct product of $\bR$-simple factors and its center:
$$
G=G_1\cdots G_n Z
$$
Its action on $E_x$ is given by some representation $\rho = \rho_1\boxtimes\cdots\boxtimes \rho_n\boxtimes \chi$, where $\rho_i$ is a representation of $G_i$ and $\chi$ of $Z$.
The Hodge structure is given by a map $h:\bS\to G$, which we can also project to (a finite quotient of) each $G_i$.
Denote the projections by $p_i:G\to G_i'$, where $G_i'$ equals $G_i$ mod a finite central subgroup.

\begin{proposition}
 If $\rho\circ h$ gives a Hodge structure of weight $1$, then $h$ is non-trivial on at most one non-compact factor of $G$.
 Moreover, $p_i\circ h$ is trivial on the compact factors. 
\end{proposition}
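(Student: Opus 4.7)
The plan is to combine a classical Cartan-involution argument on the adjoint Lie algebra with a tensor-factor analysis of the representation on $E_x$ itself. First I would observe that, because $E_x$ has weight~$1$, the induced Hodge structure on $\operatorname{End}(E_x)$ has types only among $(-1,1),(0,0),(1,-1)$, and the same is true of its $\operatorname{Ad}(h(i))$-stable subspace $\mathfrak{g}$. The polarization on $E_x$ induces one on $\mathfrak{g}$, and standard Hodge--Riemann positivity then gives that $\operatorname{Ad}(h(i))$ is a Cartan involution on the adjoint Lie algebra (Deligne's axiom (SV2), automatic from polarization when the weight is $1$). Its restriction to each simple ideal $\mathfrak{g}_i$ is then also a Cartan involution, since inner automorphisms preserve simple factors and Killing forms restrict compatibly. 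For a compact simple $\mathfrak{g}_i$, the only Cartan involution is the identity, forcing $\mathfrak{g}_i^{(1,-1)}\oplus \mathfrak{g}_i^{(-1,1)}=0$; consequently $\operatorname{Ad}(h_i(z))$ acts trivially on $\mathfrak{g}_i$ for every $z\in\bS$, placing $h_i(\bS)$ inside the finite group $Z(G_i')$, which by connectedness of the image forces $h_i$ to be trivial. This settles the second assertion.

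The main obstacle is the non-compact case, which I would attack via the tensor decomposition $E_{x,\bC}\cong V_1\otimes\cdots\otimes V_n\otimes W$ coming from the almost-direct-product structure of $G$. If $h_i$ is non-trivial on a non-compact $G_i$, then $\mathfrak{g}_i^{(1,-1)}\neq 0$; since $G_i$ is simple and acts non-trivially on $E_x$, the representation $\rho_i$ has finite kernel, so a non-zero element of $\mathfrak{g}_i^{(1,-1)}$ shifts some non-zero piece $V_i^{p,q}$ into $V_i^{p+1,q-1}$. Hence $V_i$ carries non-zero Hodge pieces at two consecutive first Hodge indices, say $a$ and $a+1$.

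Now supposing for contradiction that two non-compact factors $G_1,G_2$ both carry non-trivial $h_i$, I would pick such consecutive pieces $\{a,a+1\}\subset V_1$ and $\{b,b+1\}\subset V_2$, and fix any non-zero Hodge pieces in the remaining factors whose total first Hodge index is some integer $C$. The four resulting non-zero tensor products (with $p_1\in\{a,a+1\}$ and $p_2\in\{b,b+1\}$) contribute to three distinct Hodge pieces of $E_x$ at first Hodge indices $a+b+C$, $a+b+1+C$, $a+b+2+C$. But $E_x$ has first Hodge index only in $\{0,1\}$, and three consecutive integers cannot fit into a two-element set---this is the desired contradiction, completing the argument.
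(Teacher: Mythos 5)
Your proof is correct and takes essentially the same route as the paper: the tensor-product weight-counting argument for the non-compact factors is exactly the paper's second paragraph, and your Cartan-involution analysis of the compact factors makes precise the paper's terse one-line assertion that triviality of $p_i\circ h$ corresponds to compactness. The extra bookkeeping you supply (deducing $\mathfrak{g}_i^{(1,-1)}\neq 0$ from non-triviality of $h_i$ via finiteness of the center of $G_i'$, and working with the tensor factorization of $E_x$ across the almost-direct product, which strictly requires passing to a finite cover of $\bS$ to lift $h$ but only rescales Hodge indices and leaves the three-consecutive-levels contradiction intact) is sound and fills in steps the paper elides.
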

\begin{proof}
 Consider the induced Hodge structures given by $\rho_i \circ p_i \circ h$.
 If they only have type $(0,0)$, this implies $p_i\circ h$ is in the finite center of $G_i'$, thus trivial.
 This can only occur for compact factors.
 
 There can be only one non-trivial representation leading to weights of type $(1,0)$ and $(0,1)$.
 Otherwise, the tensor product of two such will have at least three non-trivial weight spaces.
\end{proof}

In other words, we only need to consider representations $h:\bU\to G$ where $G$ is a non-compact simple real algebraic group.
Here, $\bU$ is the unit circle group, a natural subgroup of $\bS$.
Indeed, the compact factors only give rise to multiplicities, while the scaling is accounted for by the center.

To summarize, we want to classify representations
\begin{align}
\label{eqn:Hodge_rep}
\bU\xrightarrow{h} G \xrightarrow{\rho} \GL(E)
\end{align}
where $\bU=S^1$ is the unit circle group, $G$ is an $\bR$-simple group and $\rho$ is a representation such that $\rho\circ h$ yields a Hodge structure of weight $1$.

To begin the analysis, consider the adjoint representation of $G$ on its Lie algebra $\frakg$.
The representation $h$ gives a Hodge structure on $\frakg$ of weight $0$ and a decomposition
$$
\frakg = \frakg^{1,-1}\oplus \frakg^{0,0}\oplus \frakg^{-1,1}
$$
The classification of such Lie algebras is identical to that of Hermitian symmetric domains (see \cite[Ch. X, Sect. 6.1]{Helgason}).
It reduces to classifying pairs (Dynkin diagram, special vertex) where a vertex is special if the corresponding highest weight is minuscule.
Terminology follows \cite{Bourbaki} and a good introduction is in the lecture notes of Looijenga \cite[Sect. 2]{Looijenga}.
Following \cite{Helgason} the list is:
\begin{enumerate}
 \item[(i)] $A_n$ and any vertex. 
 Then $G=\SU_{p,q}$.
 \item[(ii)] $B_n$ and the end vertex corresponding to a long root.
 Then $G=\SO_{2n-1,2}(\bR)$.
 \item[(iii)] $C_n$ and the end vertex corresponding to the (unique) long root.
 Then $G=\Sp_{2g}(\bR)$.
 \item[(iv)] $D_n$ and either of the three end vertices.
 Then\footnote{The Lie algebra of $\SO_{2n}^*$ is described and analyzed in \autoref{eg:so*} } $G=\SO_{2n}^*$ or $\SO_{2n-2,2}(\bR)$.
 \item[(v)] Exceptional groups, but as we shall see, these do not pass the next test.
\end{enumerate}

The condition that $\bU\to G$ admits a representation with Hodge structure of weight $1$ implies that $G$ must be classical and the highest weight of the representation is minuscule.
We refer for this to \cite[Thm. B.65]{Lewis}, but see also \cite{Serre}.
The list of simple pieces of the representation after complexification is:
\begin{enumerate}
 \item[(i)] $\fraksl_n\bC$ in any exterior power of the standard representation.
 \item[(ii)] $\so_{2n+1}\bC$ in the spin representation.
 \item[(iii)] $\fraksp_{2g}\bC$ in the standard representation.
 \item[(iv)] $\so_{2n}\bC$ in the standard representation, or either of the spin representations.
\end{enumerate}

Inspecting the first list to see which of the allowed representations give a Hodge structure of weight $1$, we find
\begin{enumerate}
 \item [(i)] $\su_{p,q}$ in the standard representation, or $\su_{p,1}$ in any exterior power representation.
 \item [(ii)] $\so_{2n-1,2}(\bR)$ in the spin representation.
 \item [(iii)] $\fraksp_{2g}(\bR)$ in the standard representation.
 \item [(iv)] $\so_{2n}^*$ in the standard representation, or $\so_{2n-2,2}(\bR)$ in either of the spin representations.
\end{enumerate}

Note that this list is essentially the same as that given by Satake in \cite[pg. 461]{Satake_classification}.
See also the exposition of Deligne \cite[Ch. 1]{Deligne_VarShim}, in particular Table 1.3.9 and Remark 1.3.10.

Let us summarize our findings.
\begin{theorem}
\label{thm:gps_alg_hull}
 From \cite{sfilip_ssimpleKZ} the Kontsevich-Zorich cocycle is semisimple, and its decomposition respects the Hodge structure.
 Consider an $\bR$-irreducible piece, and let $G$ be the corresponding semisimple group in the algebraic hull.
 
 Then $G$ has at most one non-compact factor, and it lies in a certain representation.
 At the level of Lie algebras, the corresponding real Lie algebra and representation must be one from the list (repeated from above)
\begin{enumerate}
 \item [(i)] $\su_{p,q}$ in the standard representation, or $\su_{p,1}$ in any exterior power representation.
 \item [(ii)] $\so_{2n-1,2}(\bR)$ in the spin representation.
 \item [(iii)] $\fraksp_{2g}(\bR)$ in the standard representation.
 \item [(iv)] $\so_{2n}^*(\bR)$ in the standard representation, or $\so_{2n-2,2}(\bR)$ in either of the spin representations.
\end{enumerate}
\end{theorem}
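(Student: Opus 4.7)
The plan is to reduce the classification to a representation-theoretic problem about pairs $(G,\rho)$ where $G$ is an $\bR$-simple Lie group and $\rho$ is a representation such that, composed with a map $h:\bU\to G$ from the unit circle, one obtains a polarized Hodge structure of weight~$1$. The reduction proceeds in two steps. First, invoke semisimplicity of the KZ cocycle from \cite{sfilip_ssimpleKZ} so that we may fix an $\bR$-irreducible piece $E$, and set $G$ to be the Zariski closure of the monodromy on it. By the result of Deligne--Andr\'e (quoted above), at a very general $x\in\cM$ the connected component $G^\circ$ sits as a normal subgroup of the Mumford--Tate group $\MT(E_x)$. Then decompose $\MT(E_x)$ as an almost direct product of $\bR$-simple factors and its center, and apply the Proposition above to conclude that the Deligne torus $\bS\to\MT(E_x)$ acts nontrivially on at most one non-compact simple factor, with compact factors contributing only multiplicity.

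Next, I would classify the admissible $\bR$-simple $G$ by looking at the adjoint action: $h$ endows $\frakg$ with a weight-$0$ Hodge structure $\frakg=\frakg^{1,-1}\oplus\frakg^{0,0}\oplus\frakg^{-1,1}$, which is exactly the infinitesimal data of a Hermitian symmetric domain. This corresponds bijectively to pairs (Dynkin diagram, special vertex), for which the standard list (see \cite[Ch.~X]{Helgason}) gives precisely $\SU_{p,q}$, $\SO_{2n-1,2}(\bR)$, $\Sp_{2g}(\bR)$, $\SO_{2n-2,2}(\bR)$, $\SO^*_{2n}$, and the two exceptional cases $E_6^{-14}$ and $E_7^{-25}$.

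The second constraint is that $\rho\circ h$ must be of weight $1$, i.e.\ only the Hodge types $(1,0)$ and $(0,1)$ appear in $E$. Over $\bC$, this forces each simple summand of $\rho$ to be a \emph{minuscule} representation of the complexified Lie algebra, since any non-minuscule highest weight would produce a weight chain of length $\geq 3$ for a cocharacter of $\bU$ and therefore at least three Hodge types. I would cite \cite[Thm.~B.65]{Lewis} (or \cite{Serre}) to enumerate the minuscule representations of complex simple Lie algebras: exterior powers of the standard for $A_n$, the spin for $B_n$, the standard for $C_n$, and the standard plus two half-spin representations for $D_n$. The two exceptional Hermitian symmetric domains do admit minuscule representations, but one checks directly that the resulting Hodge structure has weight $\geq 2$, eliminating them.

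Intersecting the list of Hermitian $\bR$-forms with the list of minuscule representations, and keeping only those combinations in which the induced Hodge decomposition is concentrated in bidegrees $(1,0)$ and $(0,1)$, yields exactly the four cases (i)--(iv) in the statement. The point where care is needed is case~(i): for an exterior power $\Lambda^k$ of the standard representation of $\su_{p,q}$, the resulting Hodge structure on $\Lambda^k\bC^{p+q}$ has weights running from $(k,0)$ down to $(0,k)$ in the induced decomposition, so weight~$1$ forces either $k=1$ (any $p,q$) or $q=1$ (any $k$). I expect this bookkeeping in the $A_n$ and $D_n$ cases --- together with verifying that the real forms listed are the only ones carrying the required polarization --- to be the main obstacle, while ruling out the exceptional groups is a short direct computation.
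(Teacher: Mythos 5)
Your proposal is correct and follows essentially the same route as the paper: reduction to Mumford--Tate via the Deligne--Andr\'e normality result, the "at most one non-compact factor" observation, the Hermitian-symmetric-domain classification of the adjoint Hodge structure, and the minuscule constraint from weight $1$ (citing the same sources, Helgason and Lewis/Serre). The only point of difference is cosmetic: you spell out the $\Lambda^k$ bookkeeping for $A_n$ and the elimination of $E_6,E_7$ slightly more explicitly than the paper, which simply records the outcome and refers to Satake and Deligne for the cross-check.
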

 
\begin{remark}
 Note that a Lie algebra $\frakg$ is in the list if there is a corresponding real Lie group $G$, with Lie algebra $\frakg$ with the following property.
 There is a homomorphism from the unit circle $h:\bU\to G$, and a representation of $G$ denoted $\rho:G\to \GL(E)$ such that $\rho\circ h$ endows $E$ with a Hodge structure of weight $1$.
 This is described in \autoref{eqn:Hodge_rep}, and is the only thing necessary for the classification.
\end{remark}

The classification in \autoref{thm:gps_alg_hull} is ultimately concerned only with Hodge structures of weight $1$.
Therefore, it applies to both the flat and $\SL_2\bR$-invariant semisimple decompositions of the Kontsevich-Zorich cocycle, as the next result shows.

\begin{theorem}
 \label{thm:SL_2_invar}
 Let $E$ be an irreducible piece of the $\SL_2\bR$-invariant decomposition of the Kontsevich-Zorich cocycle.
 Let $G$ be the algebraic hull of $E$, described as follows.
 Consider the list of all $\SL_2\bR$-invariant tensors $\{\tau_{i,j,k}\}$ with $\tau_{i,j,k}\in E^{\otimes i}\otimes (E^\dual)^j$ a global section (e.g. the symplectic form will always be in the list).
 The algebraic hull $G$ in a fiber $E_x$ of the bundle is the subgroup of $\GL(E_x)$ which preserves all these tensors.
 
 Then the Lie algebra $\frakg$ of $G$ must be one from the list in \autoref{thm:gps_alg_hull}.
\end{theorem}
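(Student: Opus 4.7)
The plan is to reduce \autoref{thm:SL_2_invar} to the argument already carried out for \autoref{thm:gps_alg_hull}, by verifying that the $\SL_2\bR$-invariant algebraic hull $G$ satisfies the same Hodge-theoretic hypotheses used there. Those hypotheses were: $G$ is a real reductive algebraic group acting on $E_x$ preserving a polarization; and the Hodge filtration at a generic fiber is realized by a homomorphism $h\colon\bU \to G$ making $E_x$ into a polarized weight-$1$ Hodge structure. Granted these, the classification follows mechanically from the Dynkin theory of Hermitian symmetric Lie algebras combined with the minuscule highest weight condition, exactly as in \autoref{sec:alg_hull_classification}.

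First I would set up the Hodge-theoretic structure on $E$. By \cite{sfilip_ssimpleKZ}, the $\SL_2\bR$-invariant semisimple decomposition respects the Hodge filtration, so an $\SL_2\bR$-irreducible piece $E$ carries a sub-variation of Hodge structure of weight $1$ over $\cM$, polarized by the restriction of the ambient symplectic form. At any fiber $E_x$ this gives a polarized Hodge structure, hence a Deligne torus $h_x\colon\bU \to \Sp(E_x)$. The symplectic form itself is a globally defined $\SL_2\bR$-invariant tensor, so the algebraic hull satisfies $G \subseteq \Sp(E_x)$, and reductivity of $G$ follows from standard Zimmer-type arguments using the polarization as an $\SL_2\bR$-invariant positive-definite auxiliary structure via the Hodge metric.

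The crucial and most delicate step is to exhibit the weight-$0$ Hodge structure on $\frakg = \Lie(G)$, equivalently to show that $h_x$ lies in the normalizer of $G$. The plan here is to mimic the flat case: show that $G$ is a normal subgroup of the Mumford-Tate group $\MT(E_x)$ at a generic fiber. Since $\SL_2\bR$-invariant tensors include in particular the flat tensors (Hodge classes over $\bQ$), one has $G \subseteq \MT(E_x)$; normality should then follow by adapting \cite[Prop. 7.5]{Deligne_K3}, using that the measurable field of Hodge filtrations defining $h_x$ is $\SL_2\bR$-equivariant on a set of full measure, so the countable-intersection genericity argument still applies. Once normality is in hand, $h_x$ conjugates $G$ to itself and induces the desired weight-$0$ Hodge structure on $\frakg_\bC = \frakg^{1,-1}\oplus \frakg^{0,0} \oplus \frakg^{-1,1}$.

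The main obstacle is making rigorous the passage from ``$\SL_2\bR$-invariant'' to ``Mumford-Tate normalized'', since the former is a measure-theoretic notion attached to the dynamics while the latter is algebraic and attached to a single generic fiber; this is analogous to the subtlety handled by Deligne's Theorem of the Fixed Part in the classical VHS setting. Once this bridge is in place, the remainder is a direct repetition of \autoref{sec:alg_hull_classification}: the Hermitian weight decomposition of $\frakg_\bC$ selects the Dynkin type; the weight-$1$ condition on $\rho\circ h_x$ forces compact factors of $G$ to act trivially and allows at most one non-compact simple factor, whose representation must be minuscule; the resulting list coincides with that of \autoref{thm:gps_alg_hull}.
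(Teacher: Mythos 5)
Your overall plan — reduce to the Hodge-theoretic classification by producing a weight-$0$ Hodge structure on $\frakg$ — is in the same spirit as the paper, but the key lemma you lean on is different and leaves a genuine gap. The paper does not go through normality of $G$ in the Mumford--Tate group. Instead it proves the stronger and more directly useful containment $h(\bU)\subseteq G$, by arguing that the algebraic hull can be cut out by $\SL_2\bR$-invariant tensors of pure Hodge type $(n,n)$: by the semisimplicity results of \cite{sfilip_ssimpleKZ}, each $(p,q)$-component of an invariant tensor is again invariant, and a pure-type invariant section can be replaced by the projection onto its line, which is of type $(0,0)$ and still recovers the same group (the cocycle cannot act by nontrivial scaling on a pure section because it preserves the indefinite Hodge norm on it). Since $h(\bU)$ preserves \emph{all} $(n,n)$-type tensors by the very definition of the Hodge torus, $h(\bU)\subseteq G$ follows immediately, and the classification of \autoref{thm:gps_alg_hull} applies verbatim.

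The gap in your route is the normality claim $G\normal \MT(E_x)$. What you can cheaply deduce is the containment $G\subseteq G_{\mathrm{flat}}\subseteq \MT(E_x)$ (since flat tensors are $\SL_2\bR$-invariant, the $\SL_2\bR$-invariant hull is a subgroup of the Zariski closure of monodromy). But Deligne's normality theorem is specifically about the image of $\pi_1$: its proof runs through the Theorem of the Fixed Part for the flat monodromy action, and there is no version of it for a potentially strictly smaller reductive subgroup such as the $\SL_2\bR$-invariant algebraic hull. Reductive subgroups of $\MT$ need not be normal, and nothing in the $\SL_2\bR$-equivariance of the measurable Hodge filtration field produces normality; your phrase ``normality should then follow by adapting'' is precisely the missing argument. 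A secondary issue: even with normality you would only get that $h_x(\bU)$ \emph{normalizes} $G$, whereas the classification in \autoref{sec:alg_hull_classification} is phrased for a homomorphism $\bU\to G$ landing \emph{inside} $G$ (cf.\ \autoref{eqn:Hodge_rep} and the remark following \autoref{thm:gps_alg_hull}); bridging that would require a further Schur-lemma type argument to adjust $h_x$ by a central element. The paper's direct $(n,n)$-tensor argument sidesteps both difficulties at once.
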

\begin{proof}
 Each fiber $E_x$ carries a Hodge structure of weight $1$, thus a homomorphism $h:\bU\to \GL(E_x)$.
 To prove the result, it suffices to check that the image of $\bU$ lands inside $G$ (c.f. \autoref{eqn:Hodge_rep}).
 
 \paragraph{Step 1.}
 In the list of tensors $\tau_{i,j,k}$ defining $G$, we can select a subset of tensors which are of pure Hodge type $(n,n)$ (in each fiber of the tensor bundle) and still recover the same group.
 
 Indeed, by the results from \cite{sfilip_ssimpleKZ}, any $\SL_2\bR$-invariant tensor must have all its $(p,q)$-components in the Hodge decomposition also $\SL_2\bR$-invariant.
 Moreover, any invariant section will have an invariant complement, respecting the Hodge structure.
 Thus projection onto the invariant section defines another tensor, of pure type $(0,0)$ if the section was of pure type $(p,q)$.
 
 Finally, the section is invariant up to scaling if and only if projection onto its subspace is invariant.
 But the cocycle cannot act by scaling on a section of pure type $(p,q)$ since it must preserve its indefinite norm (non-trivial, since the section is pure).
 
 \paragraph{Step 2.}
 The algebraic hull $G$ is described as a group preserving a list of tensors of type $(n,n)$.
 However, the image $h(\bU)$ preserves all tensors of type $(n,n)$, and this is its defining property.
 Therefore, it must be contained inside $G$.
\end{proof}

\section{Geometry of the Oseledets theorem}
\label{sec:overview_Oseledets}

This section describes the geometric interpretation of the Oseledets theorem and how it relates to the usual ``matrix'' version.
The geometric version from \autoref{sec:geom_oseledets}, following Kaimanovich \cite{Kaimanovich} becomes a statement about drift in symmetric spaces.
\autoref{sec:examples} contains examples relevant to the KZ cocycle.

\subsection{Oseledets theorem in semisimple Lie groups}
\label{sec:geom_oseledets}
Let $G$ be a group and suppose $\alpha$ is $G$-valued cocycle over a space $M$ equipped with an ergodic measure $\mu$ invariant under a flow $g_t$.
Choosing a representation $\rho$ of $G$ leads to a linear cocycle and the usual Oseledets theorem applies to it.
In this section, following Kaimanovich \cite{Kaimanovich} we describe the geometric version of the Oseledets theorem and how it relates to the Lyapunov exponents of linear cocycles.

\paragraph{Structure theory of $G$.}
We refer to \cite[Ch. 28, 29]{Bump} for facts and terminology used below.

Assume that $G$ is a real semisimple Lie group, with Lie algebra $\frakg$.
Then it has a maximal compact $K\subseteq G$ with Lie algebra $\frakk$.
The corresponding Cartan involution $\sigma$ leads to a decomposition into $\pm 1$ eigenspaces $\frakg = \frakk \oplus \frakp$.

Inside $\frakp$ choose a maximal abelian subalgebra $\fraka$ and decompose $\frakg$ for the adjoint action of $\fraka$.
The weights form a (not necessarily reduced) root system $\Phi_\fraka\subset \fraka^\dual$.

Extend now $\fraka$ to a maximal Cartan subalgebra $\frakh$ by adding a compact torus.
For the action of $\frakh$ on $\frakg_\bC$, we get the root system $\Phi_\frakg\subset \frakh^\dual$.
The inclusion $\fraka\subset \frakh$ yields a map $\frakh^\dual\to\fraka^\dual$ which induces a map on roots $\Phi_\frakg\to \Phi_\fraka$.
See Bump \cite[Ch. 29]{Bump} for the general theory, as well as \autoref{eg:su_pq}, \autoref{eg:orthogonal}, and \autoref{eg:so*} below.

Recall also that we have the associated symmetric space $X:=G/K$.
We also have the decomposition $G=KAK$, where $A$ is the subgroup corresponding to the abelian Lie algebra $\fraka$.
Geodesics in $X$ are described by a choice of a vector in $\fraka$, the ``speed'', and an element of $K$, the ``direction''.
We also have the positive Weyl chamber, denoted $\fraka^+\subset \fraka$.

\begin{example}
 Suppose that $G=\SL_n\bR$, then $K=\SO_n(\bR)$.
 The space $X=G/K$ is also the space of metrics on $\bR^n$, so a choice of point in $X$ is the same as a choice of metric on $\bR^n$.
 
 The abelian subalgebra $\fraka$ is the set of trace zero diagonal matrices.
 Therefore, the subgroup $A$ consists of diagonal matrices of determinant $1$ and positive entries.
 The positive Weyl chamber $\fraka^+\subset \fraka$ consists of trace zero diagonal matrices with entries arranged in increasing order.
\end{example}

Another illustration is given in \autoref{eg:su_pq} and \autoref{eg:orthogonal}.

For the next result, keep the notation as above.

\begin{theorem}[Oseledets theorem, \cite{Kaimanovich}]
 Suppose $(M,\mu,g_t)$ is a probability measure space with an ergodic flow.
 Fix a semisimplie Lie group $G$ and let $\cP\to M$ be a $G$-bundle over $M$, with a lift of the $g_t$-action to $\cP$.
 
 On the associated symmetric space bundle $\cX\to M$ with fiber $X=G/K$, choose a basepoint $h:M\to \cX$.
 Suppose the induced cocycle is integrable for the basepoint $h$, meaning that the function on $M$
 $$
 N(x):=\sup_{t\in [-1,1]} \dist(h(g_t x), g_t h(x))
 $$
 is $\mu$-integrable, where $g_t$ denotes the time-$t$ map of the flow.
 
 Then there exists a vector $\Lambda\in \fraka^+$, called the Lyapunov vector, with the following property.
 For $\mu$-a.e. $x\in M$, decompose the geodesic in $G/K$ from $h(g_T x)$ to $g_T h(x)$ in the $KAK$ decomposition as $k_1 A_T k_2$.
 We can take the logarithm of $A_T\in A$ to get an element of $\fraka$ and then
 $$
 \lim_{T\to \infty} \frac 1 T \log A_T = \Lambda
 $$
 There is also a corresponding statement for the convergence of directions, i.e. of the elements in $K$.
 It corresponds to the existence of Oseledets subspaces.
\end{theorem}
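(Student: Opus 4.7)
The plan is to reduce the statement to the classical linear multiplicative ergodic theorem by pushing the $G$-valued cocycle through a family of linear representations, while reinterpreting singular-value data in terms of the positive Weyl chamber $\fraka^+$. Equivalently, one may run Kingman's subadditive ergodic theorem directly against suitable linear functionals on $\fraka$; I will outline both routes, since the first delivers the direction statement most naturally while the second is the cleanest source of the radial limit.

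First I would encode the discrepancy between the two lifts of $g_t$ as an honest $G$-valued cocycle $c_T(x) \in G$, by trivializing $\cP$ along the section $h$ and comparing $g_T h(x)$ to $h(g_T x)$; this is well-defined modulo the right $K$-action. Writing its $KAK$ decomposition $c_T(x) = k_1(T,x)\exp(X_T(x))k_2(T,x)$ with $X_T(x) \in \fraka^+$, the claim becomes $T^{-1} X_T(x) \to \Lambda$ for some $\Lambda \in \fraka^+$, together with convergence of the directional components. I would then fix a finite family $\{\rho_i : G \to \GL(V_i)\}$ of representations whose weights span $\fraka^\dual$ (fundamental representations suffice). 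The key structural identity is that, relative to a $K$-invariant Hermitian form on $V_i$, the logarithms of the singular values of $\rho_i(g)$ are exactly $\{\omega(X) : \omega \text{ a weight of } V_i\}$ when $g = k_1 \exp(X) k_2$. Since integrability of $N(x)$ transfers through each continuous $\rho_i$ to a bound on $\log^+\|\rho_i(c_{\pm 1}(x))\|$, classical Oseledets applies to each linear cocycle $\rho_i\circ c_T$ and its Lyapunov exponents pin down $\omega(\Lambda)$ for every weight $\omega$. Varying $i$ determines $\Lambda \in \fraka$, and the inclusion $\Lambda \in \fraka^+$ is automatic from the $KAK$ convention.

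A more intrinsic route proceeds through Kingman: for any $\phi \in \fraka^\dual$ that is non-negative on $\fraka^+$, the geometric triangle inequality in the symmetric space $G/K$ gives the subadditive relation
\begin{equation*}
\phi(X_{T+S}(x)) \leq \phi(X_T(x)) + \phi(X_S(g_T x)),
\end{equation*}
so Kingman combined with ergodicity produces an almost sure constant limit $\lim_T T^{-1}\phi(X_T(x))$, and linearity in $\phi$ recovers $\Lambda$. For the convergence of the directional factors $k_i \in K$, I would extract the Oseledets flags in each $V_i$: they assemble into a measurable section of a flag variety $G/P$, where $P$ is the parabolic determined by which walls of $\fraka^+$ contain $\Lambda$, and the transitive action of $K$ on $G/P$ returns the claimed direction in $K/(K\cap P)$. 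The principal obstacle I anticipate is the degenerate case: when $\Lambda$ lies on a wall, $P$ is strictly larger than a Borel, so the direction is canonical only modulo $K \cap P$, and one must check that the flags extracted from different $\rho_i$ are mutually compatible with this ambiguity. The purely dynamical content is packaged in Kingman's theorem (or equivalently in the classical Oseledets theorem applied via the $\rho_i$); the remaining work is Lie-theoretic bookkeeping relating singular-value asymptotics to the vector $\Lambda$.
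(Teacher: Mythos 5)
The paper states this theorem and cites Kaimanovich \cite{Kaimanovich} without supplying a proof, so there is no internal argument to compare against; I can only assess your reconstruction on its own terms.

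Your first route --- push the $G$-cocycle through a spanning family of linear representations with $K$-invariant Hermitian forms, identify singular values of $\rho_i(k_1\exp(X)k_2)$ with $\{e^{\omega(X)}\}$ over weights $\omega$ of $V_i$, apply the classical multiplicative ergodic theorem to each $\rho_i\circ c_T$, and read off $\varpi_i(\Lambda)$ from the top exponent --- is the standard reduction and is essentially correct. Integrability of $N$ transfers correctly because $\log\|\rho_i(g)\|\leq C_i\, \dist(o,g\cdot o)$. The one point worth making explicit is that it suffices to use fundamental representations and only their \emph{top} exponents: the top Lyapunov exponent of $\rho_i\circ c_T$ equals $\varpi_i(\Lambda)$, and the fundamental weights span $\fraka^\dual$, which determines $\Lambda$ without having to check consistency of the full spectra across different $\rho_i$.

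The Kingman route as written contains a genuine error in the source of subadditivity. The geometric triangle inequality in $G/K$ gives only $\|X_{T+S}(x)\|\leq\|X_T(x)\|+\|X_S(g_Tx)\|$ for the Euclidean norm on $\fraka$; it does \emph{not} yield $\phi(X_{T+S})\leq\phi(X_T)+\phi(X_S(g_T\cdot))$ for every $\phi\in\fraka^\dual$ that is non-negative on $\fraka^+$. Indeed, the class of $\phi$ for which $\phi\circ\mu$ is subadditive on $G$ (where $\mu$ is the Cartan projection) is the cone of \emph{dominant} functionals, i.e.\ non-negative combinations of fundamental weights, and this cone is strictly smaller than the cone of functionals non-negative on $\fraka^+$. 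For example in $\fraksl_3$, writing $a(g)=(\log\sigma_1,\log\sigma_2,\log\sigma_3)$, the functional $x_2-x_3$ is non-negative on $\fraka^+$ but equals $2\varpi_2-\varpi_1$ and hence is not dominant; one should not expect it to be subadditive. The correct input is Kostant's dominance inequality $\mu(g_1g_2)\preceq\mu(g_1)+\mu(g_2)$ (difference a non-negative combination of simple roots), or equivalently operator-norm submultiplicativity $\|\rho_i(g_1g_2)\|\leq\|\rho_i(g_1)\|\,\|\rho_i(g_2)\|$ in a representation with highest weight $\varpi_i$, which yields subadditivity exactly for dominant $\phi$. Since fundamental weights span $\fraka^\dual$, the Kingman argument closes once restricted to this cone --- but it is then essentially the same as your first route, and the appeal to the ``geometric triangle inequality'' should be dropped.

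Your remark on the degenerate wall case and the parabolic $P$ determined by the stabilizer of $\Lambda$ is the right concern for the directional statement and matches the standard treatment; one shows the Oseledets flags of the $\rho_i$ all arise from a single measurable section $M\to G/P$ by checking functoriality under the intertwiners $V_i\otimes V_j\to V_k$.
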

\begin{remark}
 Given a representation $\rho$ of $G$, it has corresponding weights (with multiplicities) $\Sigma_\rho\subset \frakh^\dual$.
 The Lyapunov exponents of the linear cocycle $\rho\circ\alpha$ are given by the evaluation
 $$
 \Sigma_\rho\into \frakh^\dual\to \fraka^\dual\xrightarrow{ev_\Lambda}\bR
 $$
 Namely, the real numbers which are the images of $\Sigma_\rho$ under the above map are the Lyapunov exponents.
 
 In the example of $\SL_n\bR$ discussed above, simplicity of the spectrum (for the standard representation) means that $\Lambda$ lies in the interior of the Weyl chamber $\fraka^+$.
\end{remark}

\begin{corollary}
\label{cor:zero_is_rk}
 The linear cocycle $\rho\circ \alpha$ has at least as many zero exponents as there are weights mapped to zero via the composition
 $$
 \Sigma_\rho\to \frakh^\dual\to \fraka^\dual
 $$ 
\end{corollary}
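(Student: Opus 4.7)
The plan is to read off the corollary directly from the Remark preceding it, which records that the Lyapunov exponents of $\rho\circ\alpha$ are obtained as the composite
$$
\Sigma_\rho \hookrightarrow \frakh^\dual \to \fraka^\dual \xrightarrow{\mathrm{ev}_\Lambda} \bR,
$$
with weights counted according to their multiplicity in the weight-space decomposition of $\rho$. In particular, if a weight $\lambda\in\Sigma_\rho$ already restricts to $0$ on $\fraka$, then $\mathrm{ev}_\Lambda(\lambda)=0$ no matter which element $\Lambda\in\fraka^+$ is the Lyapunov vector. So each such $\lambda$, with its multiplicity, contributes a zero to the Lyapunov spectrum of $\rho\circ\alpha$.

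More concretely, the first step is to fix notation: let $m\geq 0$ denote the dimension of the sum of weight spaces of $\rho$ whose weights lie in the kernel of the restriction map $\frakh^\dual\to \fraka^\dual$. The second step is to invoke the Remark: the Lyapunov exponents, listed with multiplicity, are exactly the values $\mathrm{ev}_\Lambda(\lambda)$ for $\lambda$ running over $\Sigma_\rho$ with multiplicity. The third step is the tautology that $\mathrm{ev}_\Lambda$ kills anything that restricts to $0$ on $\fraka$, giving at least $m$ zero exponents. The inequality is ``at least'' rather than equality because additional weights $\lambda$ that do not restrict to $0$ on $\fraka$ may still happen to satisfy $\lambda(\Lambda)=0$ whenever $\Lambda$ lies on a wall of $\fraka^+$, producing further, ``accidental'' zero exponents beyond the ones forced by the root data.

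There is no real obstacle here beyond bookkeeping: one only needs to be careful that the multiplicities used in the Oseledets identification (sizes of Oseledets subspaces) agree with the weight multiplicities appearing in $\Sigma_\rho$, which is exactly the content of the Kaimanovich version of the theorem as stated in the Remark. Once the bookkeeping is in place, the corollary is immediate, so the ``proof'' is at most one line: apply the identification of exponents with $\mathrm{ev}_\Lambda\circ(\text{restriction})$ and observe that weights vanishing on $\fraka$ are annihilated by any $\Lambda\in\fraka$.
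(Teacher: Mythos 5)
Your proof is correct and takes exactly the route the paper intends: the corollary is stated immediately after the Remark precisely because it is a one-line consequence of identifying the Lyapunov spectrum with $\mathrm{ev}_\Lambda$ applied to the restricted weights, and any weight already killed by the restriction $\frakh^\dual\to\fraka^\dual$ is killed by $\mathrm{ev}_\Lambda$ regardless of $\Lambda$. Your additional observation that the inequality can be strict when $\Lambda$ lies on a wall of $\fraka^+$ is a correct and useful clarification of why the statement is ``at least'' rather than ``exactly.''
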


\subsection{Some examples}
\label{sec:examples}

We now work out specific examples that can potentially appear in irreducible factors of the KZ cocycle.
Below, let $I_n$ denote the identity matrix of size $n$ and $J_n$ the square matrix of size $n$ having ones on the antidiagonal and zeros elsewhere:
$$
I_n=\begin{bmatrix}
 1 & & \\
 & \ddots & \\
 & & 1
\end{bmatrix}
J_n = \begin{bmatrix}
       & & 1\\
       & \ddots & \\
       1 & & 
      \end{bmatrix}
$$

\begin{example}
\label{eg:su_pq}
 Let $\su_{p,q}$ be the Lie algebra preserving the indefinite hermitian form on $\bC^{p+q}$ given by
 $H=\left[\begin{smallmatrix}
   I_p & 0\\
   0 & -I_q
  \end{smallmatrix}
  \right]$
 Then we have that
 $$
 \su_{p,q}=\left\lbrace A\in \Mat_{n}\bC \left|
 A=\begin{bmatrix}
 a & b\\
 b^\dag & c
 \end{bmatrix}
 \right.
 \text{ with } a = -a^\dag, c=-c^\dag, \tr A = 0
 \right\rbrace
 $$  
 The Cartan involution is given by
 $$
 \sigma(X) = HXH
 $$
 The fixed points of $\sigma$ give the maximal compact subalgebra $\fraks(\fraku_p\times\fraku_q)$.
 
 To see the relative roots (assume $p\geq q$) conjugate $H$ to be in the form
 $$
 H_1 = 
 \begin{bmatrix}
  0 & 0 & I_q\\
  0 & I_{p-q} & 0\\
  I_q & 0 & 0
 \end{bmatrix}
 $$
 Then $\su_{p,q}$ becomes the set of matrices of the form
 $$
 \left\lbrace
 \begin{bmatrix}
  a & x & b\\
  y & u & -x^\dag \\
  c & -y^\dag & -a^\dag
 \end{bmatrix}
 \text{ with }
 b,c,u \text{ skew-Hermitian}
 \right\rbrace
 $$
 The maximal split abelian subalgebra $\fraka$ consists of diagonal matrices
 $$
 \fraka = \left\lbrace 
\begin{bmatrix}
t_1 &   &   &   &   &   &  \\
 & \ddots &   &   &   &   &  \\
 &   & t_q &   &   &   &  \\
 &   &   & 0_{p-q} &   &   &  \\
 &   &   &   & -t_q &   &  \\
 &   &   &   &   & \ddots &  \\
 &  &   &   &   &   & -t_1
\end{bmatrix}
 \right\rbrace
 $$
 If $f_i$ denotes a weight on which $\fraka$ acts by $t_i$, then in the description of $\su_{p,q}$ above we have that
 \begin{enumerate}
  \item [On $a$] the roots are $f_i-f_j$ for $i \neq j$
  \item [On $b$] the roots are $f_i+f_j$ for $i\neq j$ and $2f_i$
  \item [On $c$] the roots are $-(f_i+f_j)$ for $i\neq j$ and $-2f_i$
  \item [On $x$] the roots are $f_i$
  \item [On $y$] the roots are $-f_i$
 \end{enumerate}
 This shows that the relative root system is of type $BC_q$.
 
 Next, recall that the root system of $\su_{p,q}$ is of type $A_{p+q-1}$.
 We can express its roots in the form
 $$
 \alpha_i = e_i - e_{i+1} \text{ for } i=1\ldots p+q-1
 $$
 where the $e_i$ are such that $e_1+\cdots + e_{p+q}=0$.
 To describe the map from roots to relative roots, recall the basis for $BC_q$:
 $$
 \lambda_i = f_i- f_{i+1} \text{ for } i=1\ldots q-1 \text{ and } \lambda_q = f_q
 $$
 Then, following \cite[Table 4]{Vinberg} we find that
 $$
 r(\alpha_j) = \lambda_j = r(\alpha_{p+q -j} ) \text{ for } j=1\ldots q-1 \text{ and } r(\alpha_j)=0 \text{ otherwise}
 $$
 Solving for $e_i$ and $f_i$, we find that
 \begin{align*}
  e_1\mapsto f_1 && e_{p+q}\mapsto -f_1 \\
  \vdots & & \vdots\\
  e_q\mapsto f_q &&e_{p+1}\mapsto -f_q
 \end{align*}
 and the rest of the $e_i$ map to zero.
 
 The standard representation of $\su_{p,q}$ has real dimension $2(p+q)$.
 After complexification it gives two copies of the standard representation of $\fraksl_n\bC$, with weights
 $$
 e_1,e_2,\cdots, e_{p+q}
 $$
 If we take its $k$-th exterior power over the complex numbers, the weights are of the form
 $$
 \{e_{i_1}+\cdots + e_{i_k} \vert \text{the }i_j \text{ are distinct} \}
 $$
 
 Apply now the map $r$ to get the restricted weights.
 For the standard representation, we get two copies of the weights
 $$
 \{f_1, \ldots, f_q, 0,\ldots,0,-f_q,\ldots,-f_1\}
 $$
 where $0$ occurs with multiplicity $p-q$.
 
 For the $k$-th exterior power over $\bC$, we find a similar structure, where $0$ also occurs.
 Its multiplicity is given by
 $$
 \sum_{a\geq 0} \binom{q}{a }\cdot \binom{p-q}{k-2a}
 $$
 The first binomial coefficient corresponds to the choice of nontrivial weights $f_i$ (and necessarily $-f_i$).
 The second corresponds to the remaining choice of vanishing weights. 
\end{example}

\begin{remark}
\label{rmk:su_pq_index}
 A consequence of the above computation is the following connection between the restricted weights and the pseudo-hermitian metric on exterior power representations.
 
 In any exterior power (over $\bC$) of the standard representation of $\su_{p,q}$ the action preserves a pseudo-hermitian metric.
 In the restricted weight decomposition, the non-zero weights come in pairs $\alpha$ and $-\alpha$.
 Each such weight space is isotropic, but the pseudo-hermitian form restricted to their span has signature $(1,1)$.
 
 The zero restricted weight spaces are of two kinds.
 Recall that a weight $e_{i_1}+\cdots + e_{i_k}$ gets mapped to zero in the restricted weight space decomposition, if and only if whenever $e_i$ occurs, so does $e_{p+q+1-i}$, for all $i=1\ldots q$.
 Call these ``canceling pairs''.
 
 The first type of weight which gets restricted to zero has an even number of canceling pairs, and so the indefinite form will be positive-definite on it.
 The second type involves an odd number of canceling pairs, so the indefinite form will be negative-definite on it.
 See \autoref{prop:su_p1_details} for some explicit computations.
 
 A dynamical consequence is related to \cite[Prop. 3.2]{FMZ_zero}.
 Namely, even if the KZ cocycle has a piece with $\SU_{p,q}$ in an exterior power representation, the geodesic flow $g_t$ preserves an appropriate metric on the zero Oseledets subspaces.
\end{remark}

\begin{example}
 \label{eg:orthogonal}
 We now describe the orthogonal groups in the spin representations.
 The maps between relative roots systems are from \cite[Table 4, pg. 231]{Vinberg}.
 The multiplicities of the relative roots are also available in loc.cit.
 
 Before proceeding, recall the description of the root systems of type $B_n$ and $D_n$.
 We provide a basis for the root system (i.e. the simple roots), as well as the fundamental dominant weights.
 We describe them in a vector space of dimension $n$, with a chosen basis $e_1,\ldots,e_n$. 
 
 \centerline{\textbf{Type $B_n$}}
 \begin{tabular}{lll}
  Simple roots & $\alpha_i = e_i-e_{i+1}$ 	& for $ i=1\ldots n-1$\\
	       & $\alpha_n = e_n$ 		& \\
  Fund. weights& $\varpi_i=e_1+\cdots+e_i$ 	& for $i=1\ldots n-1$\\
	      & $\varpi_n = \frac 12 (e_1+\cdots +e_n)$ 	&
 \end{tabular}
\vskip 0.3cm 
 \centerline{\textbf{Type $D_n$}}
 \begin{tabular}{lll}
  Simple roots & $\alpha_i = e_i-e_{i+1}$ 	&\hspace{-1em} for $ i=1\ldots n-1$\\
	       & $\alpha_n = e_{n-1}+e_n$ 	& \\
  Fund. weights& $\varpi_i=e_1+\cdots+e_i$ 	&\hspace{-1em} for $i=1\ldots n-2$\\
		&$\varpi_{n-1} = \frac 12 (e_1+\cdots+e_{n-1} -e_n)$ & \\
		&$\varpi_n = \frac 12 (e_1+\cdots+e_{n-1} +e_n)$ &
 \end{tabular}
\vskip 0.3cm 
\noindent We now apply this to specific real Lie algebras.
\vskip 0.3cm
\noindent\textbf{Type $B_n$, algebra $\frakg=\so_{2n-1,2}\bR$.}
 We view this orthogonal algebra as the one preserving the indefinite form given by the matrix
 $$
 \begin{bmatrix}
  I_{2n-3} & 0 \\
  0 & J_4
 \end{bmatrix}
 \text{ where }
 J_4=\begin{bmatrix}
    0 & 0 & 0 & 1\\
    0 & 0 & 1 & 0\\
    0 & 1 & 0 & 0\\
    1 & 0 & 0 & 0\\
   \end{bmatrix}
 \text{ and } I_{2n-3} \text{ is an identity matrix}
 $$
 The maximal split subalgebra $\fraka$ is given by matrices of the form
 $$
 \begin{bmatrix}
  0_{2n-3} & 0\\
  0 & T
 \end{bmatrix}
 \text{ where }
 T=
 \begin{bmatrix}
  t_1 & 0 & 0 & 0\\
  0 & t_2 & 0 & 0\\
  0 & 0 & -t_2 & 0\\
  0 & 0 & 0 & -t_1\\
 \end{bmatrix}
 $$
 The weights that occur for the action of $\fraka$ on $\frakg$ are the restricted weights and form a root system of type $B_2$.
 Call the simple roots $\lambda_1,\lambda_2$ (written in a basis $\lambda_1 = f_1-f_2, \lambda_2=f_2$).
 The full root system of $\frakg$ is of type $B_n$, call the simple roots $\alpha_1,\ldots,\alpha_n$ (described in a basis $e_i$ above).
 
 According to \cite[Table 4]{Vinberg}, the induced morphism $r:\frakh^\dual\to\fraka^\dual$ of root spaces is
 $$
 r(\alpha_i)=\lambda_i \text{ for } i=1,2 \text{ and } r(\alpha_i)=0 \text{ otherwise}
 $$
 Solving for $e_i$ these equations, we find
 \begin{align*}
 r(e_1)&=\lambda_1+\lambda_2 = f_1\\
 r(e_2)&=\lambda_2 = f_2\\
 r(e_j)&=0 	\text{ otherwise}
 \end{align*}
 
 Next, recall that the standard representation of $\so$ has highest weight $\varpi_1$ and is thus denoted $V(\varpi_1)$.
 The weights that occur in it are therefore
 $$
 \Sigma_{V(\varpi_1)} = \{ e_1,\ldots,e_n,0,-e_n,\ldots,-e_1\}
 $$
 The weights that occur in the restricted root system of $\so_{2n-1,2}$ are the ones which give the Lyapunov spectrum.
 To find them, apply the map $r$ described above to the set of weights to find:
 $$
 \{f_1,f_2,0,\ldots,0,-f_2,-f_1\}
 $$
 
 For future purposes, we also need the spin representation $V(\varpi_n)$.
 From \cite[Thm. 31.2]{Bump}, it has dimension $2^n$ and weights
 $$
 \left\lbrace\frac 12 \left(\pm e_1\pm e_2 \pm \cdots\pm e_n\right)\right\rbrace
 $$
 Again, to find the Lyapunov exponents we need to apply the homomorphism $r$ above.
 They will all have multiplicity $2^{n-2}$ and are from the set
 $$
 \left\lbrace\frac 12 (f_1+f_2),\frac 12 (f_1-f_2), -\frac 12 (f_1-f_2), -\frac 12 (f_1+f_2)\right\rbrace
 $$
 
 \noindent\textbf{Type $D_n$, algebra $\frakg=\so_{2n-2,2}\bR$.}
 We view this orthogonal algebra as the one preserving the indefinite form given by the matrix
 $$
 \begin{bmatrix}
  I_{2n-4} & 0 \\
  0 & J_4
 \end{bmatrix}
 \text{ where }
 J_4=\begin{bmatrix}
    0 & 0 & 0 & 1\\
    0 & 0 & 1 & 0\\
    0 & 1 & 0 & 0\\
    1 & 0 & 0 & 0\\
   \end{bmatrix}
 \text{ and } I_{2n-4} \text{ is an identity matrix}
 $$
 The maximal split subalgebra $\fraka$ is given by matrices of the form
 $$
 \begin{bmatrix}
  0_{2n-4} & 0\\
  0 & T
 \end{bmatrix}
 \text{ where }
 T=
 \begin{bmatrix}
  t_1 & 0 & 0 & 0\\
  0 & t_2 & 0 & 0\\
  0 & 0 & -t_2 & 0\\
  0 & 0 & 0 & -t_1\\
 \end{bmatrix}
 $$
 The weights that occur for the action of $\fraka$ on $\frakg$ are the restricted weights and form a root system of type $B_2$.
 Call the simple roots $\lambda_1,\lambda_2$ (written in a basis $\lambda_1 = f_1-f_2, \lambda_2=f_2$).
 The full root system of $\frakg$ is of type $D_n$, call the simple roots $\alpha_1,\ldots,\alpha_n$ (described in a basis $e_i$ above).
 
 According to \cite[Table 4]{Vinberg}, for $n\geq 4$ the induced morphism $r:\frakh^\dual\to\fraka^\dual$ of root spaces is
 $$
 r(\alpha_i)=\lambda_i \text{ for } i=1,2 \text{ and } r(\alpha_i)=0 \text{ otherwise}
 $$
 For $n=3$ we have the isomorphism $\so_{4,2}\cong \su_{2,2}$.
 The relative root system can be viewed as either $B_2$ or $C_2$, which are isomorphic.
 The morphism $r$ as a map between $D_3$ and $B_2$ becomes
 \begin{align*}
 r(\alpha_1) &= \lambda_1\\
 r(\alpha_2) & = \lambda_2\\
 r(\alpha_3) & = \lambda_2
 \end{align*}
 Note that the morphism $D_3\to B_2$ described above is the same as $A_3\to C_2$ described in \cite[Table 4]{Vinberg} for $\su_{2,2}$.
 
 Solving now the equations for $e_i$, we find independently of $n$ that:
 \begin{align*}
 r(e_1)&=\lambda_1+\lambda_2 = f_1\\
 r(e_2)&=\lambda_2 = f_2\\
 r(e_j)&=0 	\text{ otherwise}
 \end{align*}
 
 Recall the standard representation of $\so$ has highest weight $\varpi_1$ and is denoted $V(\varpi_1)$.
 The weights that occur in it are therefore
 $$
 \Sigma_{V(\varpi_1)} = \{ e_1,\ldots,e_n,-e_n,\ldots,-e_1\}
 $$
 The weights that occur in the restricted root system of $\so_{2n-2,2}$ are the ones which give the Lyapunov spectrum.
 To find them, apply the map $r$ described above to the set of weights to find:
 $$
 \{f_1,f_2,0,\ldots,0,-f_2,-f_1\}
 $$
 
 We also need the spin representations $V(\varpi_{n-1})$ and $V(\varpi_n)$.
 From \cite[Thm. 31.2]{Bump}, each has dimension $2^{n-1}$ and weights
 $$
 \left\lbrace\frac 12 \left(\pm e_1\pm e_2 \pm \cdots\pm e_n\right)\right\rbrace
 $$
 where $V(\varpi_{n-1})$ has an odd number of minus signs, while $V(\varpi_{n})$ an even number.
 
 Again, to find the Lyapunov exponents we need to apply the homomorphism $r$ above.
 They will all have multiplicity $2^{n-3}$ and are from the set
 $$
 \left\lbrace \frac 12 (f_1+f_2),\frac 12 (f_1-f_2), -\frac 12 (f_1-f_2), -\frac 12 (f_1+f_2) \right\rbrace
 $$
\end{example}

\begin{example}
 \label{eg:so*}
 The Lie algebra $\so_{2n}^*$ is a real form of $\so_{2n}(\bC)$, whose real rank is $\lfloor n/2\rfloor$ (the rank of $\so_{2n}(\bC)$ is $n$).
 It can be described as an intersection of a unitary and an orthogonal Lie algebra:
 \begin{align*}
  \so_{2n}^* := \su\left(\bC^{2n}, 
  \begin{bmatrix}
   0 & I_n\\
   I_n & 0
  \end{bmatrix}
  \right)
  \bigcap
  \so\left(\bC^{2n}, 
  \begin{bmatrix}
   I_n & 0\\
   0 & -I_n
  \end{bmatrix}
  \right)
 \end{align*}
 The unitary and orthogonal algebras are required to preserve the hermitian, resp. symmetric forms provided above.
 Their intersection then becomes
 \begin{align*}
  \so_{2n}^* = \left\lbrace
  \begin{bmatrix}
   A & B\\
   B^t & -\conj{A}
  \end{bmatrix}
  \middle\vert 
  A=-A^t, B=-\conj{B}^t
  \right\rbrace
 \end{align*}
 This is viewed as a real Lie algebra, and its standard representation is on $\bC^{2n}=\bR^{4n}$.
 
 To define the maximal real split abelian subalgebra, introduce the $2\times 2$ complex matrix
 $$
 \delta = 
 \begin{bmatrix}
  0 & i\\
  -i & 0
 \end{bmatrix}
 $$
 Note that its eigenvalues when acting on $\bC^2$ are real:
 \begin{align*}
  \delta \cdot
  \begin{bmatrix}
   1\\
   i
  \end{bmatrix}
  &=-1\cdot
  \begin{bmatrix}
   1\\
   i
  \end{bmatrix}\\
  \delta \cdot
  \begin{bmatrix}
   1\\
   -i
  \end{bmatrix}
  &=+1\cdot 
  \begin{bmatrix}
   1\\
   -i
  \end{bmatrix}
 \end{align*}
 A maximal real abelian subalgebra of $\so_{2n}^*$ is then
 \newcommand{\diag}{\operatorname{diag}}
 $$
 \fraka = \left\lbrace
 \begin{bmatrix}
  D & 0\\
  0 & -\conj{D}
 \end{bmatrix} 
 \middle\vert 
 D=\diag\left(t_1\delta,\ldots,t_{\lfloor n/2\rfloor} \delta\right)
 \right\rbrace
 $$
 In the definition, multiples of $\delta$ are placed along the diagonal of $D$ and if $n$ is odd, then the lower-right corner of $D$ has a zero.
 The relative root system of $\so^*_{2n}$ is $C_{\lfloor n/2\rfloor}$ or $BC_{\lfloor n/2\rfloor}$, depending\footnote{See \cite[Table 4]{Vinberg} where $\so_{2n}^*$ is called $\fraku_n^*(\bH)$.} on whether $n$ is even or odd.
 
 \newcommand{\floor}[1]{\lfloor #1\rfloor}
 
 However, to compute the weights in the standard representation the description of $\fraka$ suffices.
 Since the eigenvalues of $\delta$ are $\pm 1$, it follows that the weights of $\fraka$ in the standard representation are (each with multiplicity four)
 $$
 \{ t_1,\ldots t_{\floor{n/2}}, -t_{\floor{n/2}}, \ldots, -t_1 \}
 $$
 If $n$ is odd, there is also a zero with multiplicity four.
\end{example}

\begin{remark}
 The main conclusion from \autoref{eg:orthogonal} is that for spin representations of $\so_{N,2}$ no weights get mapped to zero in the restricted root system.
 From \autoref{eg:so*} it follows that if $n$ is odd and the group is $\so_{2n}^*$, then a zero exponent occurs with multiplicity four. 
 Finally, \autoref{eg:su_pq} shows that for $\su_{p,q}$ the number of zero exponents is as expected, except perhaps in the situation of $\su_{p,1}$.
 This is analyzed in detail in \autoref{prop:su_p1_details}.
\end{remark}

\section{Zero exponents in the Kontsevich-Zorich cocycle}
\label{sec:zero_exponents}

In this section, we put together the results from previous sections and describe all possibilities for zero exponents in the Kontsevich-Zorich cocycle.
This answers affirmatively the first part of the conjecture of Forni, Matheus, and Zorich from \cite{FMZ_zero}.
We begin with explaining a somewhat different mechanism for zero exponents.

\subsection{A cautionary example}
\label{subsec:caut_example}
We present a construction of variations of Hodge structure of weight $1$ in which the number of zero exponents is not predicted by the signature of the pseudo-hermitian form.
The input is a cocycle with monodromy in $\SU_{p,1}$ (carrying a Hodge structure of weight $1$).
Its higher exterior powers still carry a Hodge structure of weight $1$, but the signature of the induced metric does not predict the number of zero exponents.

To fix ideas, we pick concrete numbers below.
The construction is at the level of linear algebra and it then applies to local systems.

\begin{example}
\label{eg:su31}
 Consider $\SU_{3,1}$ acting on $\bC^4=\bC^3\oplus \bC$ in the standard representation, preserving a pseudo-hermitian metric of signature $(3+,1-)$ in the decomposition. 
 Recall that we consider $\bC^n$ as a real vector space of dimension $2n$, equipped with an action of the algebra $\bC$.
 This action commutes with the monodromy (when in $\SU$).
 In particular, the algebra $\bC$ will act on all the Lyapunov spaces.
 
 Consider now the second exterior power (over $\bC$):
 $$
 \wedge^2 _\bC (\bC^3\oplus \bC) = \left(\wedge^2_\bC \bC^3\right) \bigoplus \left(\bC^3\otimes_\bC \bC\right) = \bC^3\oplus \bC^3
 $$
 The signature for this decomposition is $(3+,3-)$.
 In particular, given a cocycle $E$ with monodromy in $\SU_{3,1}$, its second exterior power $\wedge^2_\bC E$ has no predicted zero exponents from the signature alone.
 
 However, the Lyapunov spectrum of $E$ is (written with multiplicities, $\bC$-invariant spaces grouped in parenthesis)
 $$
 (\lambda_1, \lambda_1)\, \, (0,0)\, \, (0,0) \, \,(-\lambda_1, -\lambda_1)
 $$
 Therefore, the spectrum of $\Wedge^2_\bC E$ is
 $$
 (\lambda_1, \lambda_1)\, \, (\lambda_1 ,\lambda_1) \, \,(0,0) \, \,(0,0)\, \, (-\lambda_1, -\lambda_1) \, \,(-\lambda_1, -\lambda_1)
 $$
 Note that the first pair of zeroes comes from $(\lambda_1 \lambda_1)\otimes_\bC (-\lambda_1 -\lambda_1)=(00)$, while the second  from the previous zero exponents.
 
 We now explain why this is compatible with Hodge structures (of weight $1$).
 Recall that giving a Hodge structure on a real vector space $E_\bR$ is the same as giving an action of $\bC^\times$ (viewed as a real algebraic group) on $E_\bR$.
 The $E^{p,q}$ space after complexification corresponds to the the space on which $z\in \bC^\times$ acts as $z^p\conj{z}^q$.
 
 Take now the action of $z\in \bC^\times$ on $\bR^8 = \bC^3\oplus \bC$ to be via $z$ on $\bC^3$ and via $\conj{z}$ on $\bC$.
 The actions should be viewed as by real $8\times 8$ matrices.
 
 After complexification (i.e. taking $(-)\otimes_\bR \bC$) we get eigenspaces of dimensions $4$ each, with action via $z$ and $\conj{z}$ respectively.
 Namely, the earlier $8\times 8$ real matrix diagonalizes, with corresponding complex numbers on the diagonal.

 After taking the second exterior power, we have the decomposition
 $$
 \wedge^2_\bC \left( \bC^3\oplus \bC \right) =\left( \wedge^2_\bC \bC^3\right) \bigoplus \left(\bC^3\otimes_\bC \bC \right)= \bR^{12}
 $$
 The induced action of $z\in \bC^\times$ is by $z^2$ on the first factor, and by $\norm{z}^2$ on the second.
 Twist this action by adding the same scalar action on both factors by $\frac{\conj{z}}{\norm{z}^2}$.
 This gives the desired Hodge structure of weight $1$. 
\end{example}

 The same construction works in general, for $\SU_{p,1}$ and any exterior power.
 The result below gives its general properties.

\begin{proposition}
\label{prop:su_p1_details}
 Consider a variation of Hodge structures $E_\bR$ of weight $1$ over an affine invariant manifold $\cM$.
 Suppose the Zariski closure of the monodromy of $E_\bR$ is $\SU_{p,1}$, or $\operatorname{U}_{p,1}$ (up to finite index), acting in the standard representation on $\bC^{p+1}$.
 
 Consider the $k$-th exterior power $\wedge^k_\bC E_\bR$ of the local system.
 Then it carries an induced variation Hodge structure.
 It becomes of weight $1$ after twisting the circle action, as in \autoref{eg:su31}.
 
 Let now $X$ be the vector field of the \Teichmuller geodesic flow on $\cM$.
 Assume the top exponent of $E_\bR$ is non-zero (equivalently: the monodromy is not contained in a compact unitary group).
 Let also $\sigma$ resp. $\sigma_{\wedge^k}$ be the second fundamental form in direction $X$ of the $(1,0)$ subbundle of $E_\bC$, resp. $(\wedge^k_\bC E_\bR)_\bC$.
 
 Then the following hold:
 \begin{enumerate}
  \item[(i)] The signature of the pseudo-hermitian form on $\wedge^k_\bC E_\bR$ is 
  $$\left(\binom{p}{k}+,\binom{p}{k-1} -\right)$$
  \item[(ii)] The number of zero exponents of $\wedge^k_\bC E_\bR$ is $2\binom{p-1}{k-2}+2\binom{p-1}{k}$.
  \item[(iii)] On a subspace of zero exponents of (real) dimension $2\binom{p-1}{k-2}$ (invariant by the $\bC$-action) the flow preserves a negative-definite metric.
  On a complementary subspace of zero exponents of (real) dimension $2\binom{p-1}{k}$ (also $\bC$-invariant), the cocycle preserves a positive-definite metric.
  \item[(iv)] The number of non-zero exponents of $\wedge^k_\bC E_\bR$ is $2\binom{p-1}{k-1}$.
  \item[(v)] The rank of the second fundamental form $\sigma_{\wedge^k}$ is $\binom{p-1}{k-1}$ at a.e. point in $\cM$.
  In particular it predicts the number of non-zero exponents.
 \end{enumerate}
\end{proposition}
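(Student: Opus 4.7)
My plan is to reduce every claim to an explicit computation in the standard representation of $\SU_{p,1}$, propagate it to the $k$-th exterior power using the twisted Hodge filtration from \autoref{eg:su31} and the Leibniz rule for the second fundamental form, and invoke the restricted-weight analysis of \autoref{eg:su_pq} together with the parity-of-canceling-pairs observation in \autoref{rmk:su_pq_index}. Throughout, I keep in mind that each $\bC$-weight on $\wedge^k_\bC E_\bR$ contributes a two-dimensional real subspace, so complex weight multiplicities get doubled to real Lyapunov multiplicities.

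For (i) I would diagonalize the pseudo-hermitian form on $\bC^{p+1}$ with $p$ positive and one negative entry; then on $\wedge^k_\bC \bC^{p+1}$ a basis vector $e_{i_1}\wedge\cdots\wedge e_{i_k}$ has sign $-1$ precisely when it contains the ``negative'' index, yielding signature $(\binom{p}{k},\binom{p}{k-1})$. For (ii) I apply the formula $\sum_a\binom{q}{a}\binom{p-q}{k-2a}$ from \autoref{eg:su_pq} with $q=1$, so that only $a\in\{0,1\}$ contribute, obtaining $\binom{p-1}{k}+\binom{p-1}{k-2}$ complex restricted zero weights and hence $2\binom{p-1}{k-2}+2\binom{p-1}{k}$ real zero Lyapunov exponents. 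For (iv), Pascal's identity
\[
\binom{p+1}{k}=\binom{p-1}{k}+2\binom{p-1}{k-1}+\binom{p-1}{k-2}
\]
gives $2\binom{p-1}{k-1}$ complex non-zero weights, matching the stated count after the pairing of positive and negative exponents imposed by the symplectic/hermitian structure.

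For (iii) I would use \autoref{rmk:su_pq_index}: a restricted-zero weight $e_{i_1}+\cdots+e_{i_k}$ is one in which every canceling pair $(i,p+q+1-i)$ is either fully present or fully absent. With $q=1$ the only potential canceling pair is $(1,p+1)$, so the zero-weight spaces split into those avoiding both indices (the ``no sign flip'' type, $\binom{p-1}{k}$ of them, positive for the pseudo-hermitian form) and those containing both (the ``one sign flip'' type, $\binom{p-1}{k-2}$ of them, negative). To promote this $A$-invariant decomposition of the zero Oseledets subspace to a $g_t$-invariant one, I would argue that the cocycle on $E^0$, having vanishing exponents and preserving the pseudo-hermitian form $h$, has bounded matrix norms and thus lands in a compact subgroup of $U(h\vert_{E^0})$; any such compact subgroup preserves a compatible positive-definite hermitian inner product, and the simultaneous diagonalization with $h\vert_{E^0}$ yields the desired definite splitting of predicted real dimensions $2\binom{p-1}{k}$ and $2\binom{p-1}{k-2}$.

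For (v), the non-vanishing of the top Lyapunov exponent ensures that on the original VHS the second fundamental form $\sigma\colon E^{1,0}\to E^{0,1}$, a $\bC$-linear map from $\bC^p$ to $\bC$, is a rank-one linear functional $\ell$. Using the twisted Hodge decomposition $E^{1,0}_{\wedge^k}=\wedge^k_\bC E^{1,0}$ and $E^{0,1}_{\wedge^k}=\wedge^{k-1}_\bC E^{1,0}\otimes_\bC E^{0,1}$ from \autoref{eg:su31}, the induced form $\sigma_{\wedge^k}$ is the Leibniz extension
\[
\sigma_{\wedge^k}(v_1\wedge\cdots\wedge v_k)=\sum_{i=1}^{k}(-1)^{i-1}\,v_1\wedge\cdots\widehat{v_i}\cdots\wedge v_k\otimes \ell(v_i),
\]
i.e. the interior product with $\ell$. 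Choosing coordinates so that $\ell=e_1^{*}$, the image is exactly $\wedge^{k-1}_\bC\operatorname{span}(e_2,\ldots,e_p)\otimes E^{0,1}$, of $\bC$-dimension $\binom{p-1}{k-1}$, as claimed. The main obstacle I anticipate is in (iii): although the signature count of $h\vert_{E^0}$ is an immediate consequence of (i) and (ii), establishing that the sign-definite pieces are genuinely $g_t$-invariant (and not merely invariant under the Cartan subgroup $A$) requires the isometry-from-boundedness argument sketched above, which has to be run carefully in the presence of the non-trivial $\bC$-action that also preserves each piece.
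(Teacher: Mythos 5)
Your reconstructions of (i), (ii), (iv) and (v) track the paper's argument closely: the signature count on $\wedge^k_\bC\bC^{p+1}$, the restricted-weight formula from \autoref{eg:su_pq} specialized to $q=1$, Pascal's identity for the complement, and the Leibniz/interior-product description of $\sigma_{\wedge^k}$ with kernel $\wedge^k K$ where $K=\ker\sigma^+$ all appear essentially verbatim in the paper.

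The genuine gap is in (iii), and you already sense it. Your plan is to obtain the definite splitting of $E^0(\wedge^k_\bC E)$ by arguing that the cocycle restricted to $E^0$, having vanishing exponents and preserving $h\vert_{E^0}$, has bounded matrix norms and hence lands in a compact subgroup. That implication is false in general: zero Lyapunov exponents yield only \emph{subexponential} growth along a.e.\ orbit, not boundedness. (Upper-triangular unipotent cocycles over an ergodic base are the standard counterexample.) In fact the very point of item (iii) is that, in this particular situation, the cocycle on $E^0$ \emph{does} preserve a definite metric on each piece; one should not feed that conclusion in as a hypothesis. Beyond the boundedness issue, you have also implicitly conflated two different decompositions: the $A$-invariant splitting of the restricted-zero weight space and the desired $g_t$-invariant splitting of the zero Oseledets subspace are not a priori the same, since $A$ sits in the algebraic hull, not in the image of the cocycle along a fixed orbit.

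The paper sidesteps all of this with a direct identification. Since $E$ has Lyapunov decomposition $E^{\lambda_1}\oplus E^0\oplus E^{-\lambda_1}$ with $\dim_\bC E^{\pm\lambda_1}=1$ and $\dim_\bC E^0=p-1$, the zero-exponent subspace of $\wedge^k_\bC E$ is naturally
$$
E^0\!\left(\wedge^k_\bC E\right)=\left[\wedge^2_\bC\!\left(E^{\lambda_1}\oplus E^{-\lambda_1}\right)\otimes_\bC\wedge^{k-2}_\bC E^0\right]\;\oplus\;\left[\wedge^k_\bC E^0\right].
$$
Each summand is a $\bC$-tensor of Oseledets subspaces of the original cocycle, hence \emph{manifestly} $g_t$-invariant; no boundedness input is needed. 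The flat pseudo-hermitian form, being Gauss--Manin parallel, is preserved, and it restricts to a negative-definite form on the first summand (sign comes entirely from $\wedge^2_\bC(E^{\lambda_1}\oplus E^{-\lambda_1})$, where the hermitian form has signature $(1,1)$ so its second exterior power is negative) and to a positive-definite form on the second (since $h$ is positive on $E^0$). The real dimensions are $2\binom{p-1}{k-2}$ and $2\binom{p-1}{k}$ as required. If you replace your compactness step with this explicit tensor decomposition, the proof of (iii) goes through and the rest of your argument is sound.

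One minor remark: in (iv) you correctly land on $2\binom{p-1}{k-1}$ by counting complex restricted weights, whereas in (ii) you convert to real multiplicities by doubling; keeping one convention throughout (the paper is itself somewhat inconsistent here) would avoid the hedge about ``pairing of positive and negative exponents.''
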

\begin{proof}
 Recall that after complexification, we have the splitting
 $$
 E_\bC = E_+\oplus E_-
 $$
 where each $E_\pm$ has a further $(1,0)$ and $(0,1)$ splitting.
 The second fundamental form on the $E_+$ factor is
 $$
 \sigma^+:E^{1,0}_+\to E^{0,1}_+
 $$
 The dimension of the bundles are $p$ and $1$ respectively, so the rank of $\sigma^+$ is $1$.
 
 Let us now describe the structure of $\wedge^k_\bC E_\bR$ after complexification.
 We have
 \begin{align*}
 (\wedge^k_\bC E_\bR)_\bC &= (\wedge^k_\bC E_\bR)_+	&\bigoplus	& (\wedge^k_\bC E_\bR)_-\\
 & = (\wedge^k_\bC E_\bR)_+^{1,0}\oplus (\wedge^k_\bC E_\bR)_+^{0,1}   &\bigoplus &
 (\wedge^k_\bC E_\bR)_-^{1,0}\oplus (\wedge^k_\bC E_\bR)_-^{0,1} 
 \end{align*}
 The Lyapunov exponents of $E_\bR$ are
 $$
 (\lambda_1, \lambda_1)\, \, (0,0) \cdots (0,0) \, \,(-\lambda_1, -\lambda_1)
 $$
 with $(p-1)$ pairs of zeroes.
 Denote by $E^{\lambda}$ the subspace corresponding to the Lyapunov exponent $\lambda$.
 
 There are $2\binom{p-1}{k-1}$ non-zero exponents in the $k$-th exterior power (over $\bC$).
 Indeed, this subspace corresponds to $E^{\lambda_1}\otimes_\bC \wedge^{k-1}_\bC E^0$.
 
 For zero exponents, there are two types.
 The first kind comes from picking two vectors in the $+\lambda_1$ and $-\lambda_1$ spaces respectively, and then $\binom{p-1}{k-2}$ choices of zero exponent.
 The indefinite metric restricted to this subspace is negative-definite.
 Indeed, the subspace is isomorphic to
 $$
 \wedge^2_\bC(E^{\lambda_1}\oplus E^{-\lambda_1})\otimes_\bC \wedge^{k-2}_\bC E^{0}
 $$
 On the first factor of the tensor product, the metric is negative-definite, on the second it is positive-definite.
 
 The second kind of zero exponent comes from only picking vectors from zero exponent subspaces, giving $\binom{p-1}{k}$ choices.
 It is isomorphic to $\wedge^k_\bC E^{0}$ and the metric is positive-definite on it.
 
 Next we compute the second fundamental form $\sigma_{\wedge^k}^+$.
 On the other summand, it will be the conjugate transpose.
 
 From the Leibnitz rule
 $$
 \nabla_{\wedge^k} (a_1\wedge\cdots\wedge a_k) = \sum_{i} (-1)^{i+1} a_1\wedge\cdots \wedge \nabla a_i \wedge\cdots\wedge a_k
 $$
 we see that
 $$
 \sigma_{\wedge^k}^+(a_1\wedge\cdots\wedge a_k) = \sum_{i} (-1)^{i+1} a_1\wedge\cdots \wedge \sigma^+ a_i \wedge\cdots\wedge a_k
 $$
 We need to evaluate its rank.
 In a fiber we have $\sigma^+:\bC^p \to \bC$, therefore
 $$
 \sigma^+_{\wedge^k}:\wedge^k\bC^p \to \left(\wedge^k(\bC^p\oplus \bC)/\wedge^k\bC^p \right) \cong \bC\otimes \wedge^{k-1}\bC^p
 $$
 Denote by $K\subseteq \bC^p$ the kernel of $\sigma^+$, of dimension $p-1$ at a.e. point in $\cM$.
 Then the kernel of $\sigma^+_{\wedge^k}$ is $\wedge^k K$, and the image is $\bC\otimes \wedge^{k-1}K$.
 Therefore, the rank of $\sigma_{\wedge^k}=\sigma^+_{\wedge^k}\oplus \sigma^-_{\wedge^k}$ is $2\binom{p-1}{k-1}$ and equals the number of non-zero exponents.
\end{proof}

\begin{proposition}
\label{prop:rk_sigma}
 Let $E_\bR$ be a real, irreducible, weight $1$ variation of Hodge structure over an affine manifold.
 Assume the Zariski closure of the monodromy is either $\SU_{p,q}$ in the standard representation, or $\SO^*_{2n}$ in the standard representation.
 
 Consider the Gauss-Manin connection and the second fundamental form of the holomorphic subbundle $F^1$ along the \Teichmuller geodesic flow:
 $$
 \sigma: F^1 \to E_\bC/F^1
 $$
 Then the rank of $\sigma$ is at most $(p+q)-|p-q|$ in the $\SU_{p,q}$ case, or $2n-2$ in the case of $\SO^*_{2n}$ and odd $n$.
 Therefore, the rank is at most the number of strictly positive exponents.
\end{proposition}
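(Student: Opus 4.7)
The plan is to exploit that both $\SU_{p,q}$ and $\SO^*_{2n}$ act $\bC$-linearly on their standard representations, and in the latter case additionally preserve a complex symmetric bilinear form. I would use the $\bC$-linearity to split $\sigma$ into two pieces (one for each eigenspace of the commuting complex structure), and in the orthogonal case also use the preserved form to impose an antisymmetry that cuts each piece's rank in half when $n$ is odd.

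In each case there is a flat endomorphism $J\in \mathrm{End}_\bR(E_\bR)$ with $J^2=-1$ coming from the complex structure of $\bC^{p+q}$ or $\bC^{2n}$, commuting with the monodromy. Extending $J$ to $E_\bC$ gives $E_\bC=E_+\oplus E_-$ into $\pm i$-eigenspaces. This splitting is respected by the Hodge decomposition (since $J$ commutes with the Hodge cocharacter, which lands in $G$) and by $\nabla^{GM}$ (since $J$ is flat). Consequently $\sigma$ splits as $\sigma=\sigma_+\oplus\sigma_-$ with $\sigma_\pm:E_\pm^{1,0}\to E_\pm^{0,1}$, so it suffices to bound each piece.

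For $\SU_{p,q}$ with $p\geq q$, I would use just a dimension count. The Hodge dimensions (as reviewed in \autoref{sec:refinement}) are $\dim_\bC E_+^{1,0}=p$ and $\dim_\bC E_+^{0,1}=q$, with the dimensions reversed on $E_-$. Then the rank of each $\sigma_\pm$ is at most $q$, so the rank of $\sigma$ is at most $2q=(p+q)-|p-q|$, which by \autoref{eg:su_pq} matches the number of strictly positive exponents.

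For $\SO^*_{2n}$ the group additionally preserves a $\bC$-bilinear symmetric form $Q$ on $E_+$. The extra input I would use is that for the Hodge structure compatible with $\SO^*_{2n}$ (type DIII), both $E_+^{1,0}$ and $E_+^{0,1}$ are Lagrangian for $Q$ and paired dually by it. Since $\sigma_+$ lies in the $(-1,1)$ Hodge piece of $\so_{2n,\bC}$, the infinitesimal preservation $Q(\sigma_+u,v)+Q(u,\sigma_+v)=0$ together with the isotropy of $E_+^{1,0}$ would yield that $(u,v)\mapsto Q(u,\sigma_+v)$ is antisymmetric on $E_+^{1,0}\cong \bC^n$; equivalently $\sigma_+$ lies in $\wedge^2(E_+^{1,0})^*$, the standard description of $\frakp^-$ for type DIII. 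Antisymmetric forms on $\bC^n$ have even rank, so the rank of $\sigma_+$ is at most $2\lfloor n/2\rfloor=n-1$ when $n$ is odd; the same bound holds for $\sigma_-$ by complex conjugation. Summing gives the rank of $\sigma$ at most $2n-2$, matching the $2(n-1)$ strictly positive exponents from \autoref{eg:so*}. The hardest part will be this identification of $\frakp^-$ and the careful use of $Q$-isotropy of $E_+^{1,0}$; once the antisymmetry is in place, the rank bound drops out from the elementary fact that antisymmetric matrices have even rank.
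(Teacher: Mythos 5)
Your proof is correct and takes essentially the same route as the paper: split $E_\bC$ into the $\pm i$-eigenspaces of the flat commuting complex structure, apply a dimension count for $\SU_{p,q}$, and in the $\SO^*_{2n}$ case use the flat symmetric form $Q$ to make $\sigma_\pm$ skew-symmetric. You finish by observing that skew-symmetric matrices have even rank, whereas the paper derives the same parity constraint from $p(x)=(-1)^n p(-x)$ for the characteristic polynomial; these are the same elementary observation.
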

\begin{proof}
 When the monodromy is in $\SU_{p,q}$ the complexified bundle splits into two complex-conjugate pieces.
 On one piece $F^1$ has rank $p$ and the quotient rank $q$, while on the other the dimensions are reversed.
 The map $\sigma$ splits accordingly: on one piece it is a map from a rank $p$ bundle to a rank $q$ bundle, and on the other from rank $q$ to rank $p$.
 It follows that $\sigma$ has kernel of dimension at least $|p-q|$.
 
 When the monodromy is in $\SO_{2n}^*$, recall (see \autoref{eg:so*}) that this group was defined as the intersection of (appropriate conjugates of) $\SU_{n,n}$ and $\SO_{2n}(\bC)$.
 Because the monodromy is in $\SU_{n,n}$, the complexified bundle splits as in the previous case.
 
 In addition, however, the bundle $F^1$ is isotropic for an appropriate flat symmetric bilinear form.
 The condition that the symmetric bilinear form is flat for Gauss-Manin implise that $\sigma$ satisfies $\sigma=-\sigma^t$, where transpose is taken for the bilinear form.
 
 Thus at a given point in $\cM$, choosing a basis, $\sigma$ is an $n\times n$ matrix satisfying $\sigma=-\sigma^t$.
 If $n$ is odd, this implies that $\sigma$ has at least $1$-dimensional kernel in each factor of the splitting for $\SU_{n,n}$.
 Indeed, the characteristic polynomial of $\sigma$ will satisfy $p(x)=(-1)^n p(-x)$, and so zero will be a root.
\end{proof}

\subsection{The main result}

We can now summarize the discussion in the previous sections to describe the situations with zero exponents.

\begin{theorem}
\label{thm:KZ_zero}
 For the Kontsevich-Zorich cocycle, the number of zero exponents is precisely equal to the constraints predicted by the monodromy.
 
 Concretely, let $E_\bR$ be one of the flat $\bR$-irreducible pieces of the KZ cocycle.
 Let $G$ be the Zariski closure of the monodromy of $E_\bR$.
 
 Then zero exponents in $E_\bR$ can occur if and only if we are in the following situation.
 The group $G$ has at most one non-compact factor, equal up to finite index to $\SU_{p,q}$, for some $p> q$, or $\SO_{2n}^*$ and $n$ is odd.
 
 The representation in which $\SU_{p,q}$ occurs is the standard one, or an exterior power of the standard.
 In the standard representation, there are $2(p-q)$ zero exponents.
 
 If it is $\SU_{p,q}$ in the $k$-th exterior power of the standard with $k\geq 2$, then necessarily $q=1$.
 The number of zero exponents is then $\binom{p-1}{k-2} + \binom{p-1}{k} $.
 This number is minimum possible, given the monodromy constraint.
 
 If the group is $\SO^*_{2n}$, then zero exponents occur only if $n$ is odd, in which case there are precisely four.
 
 Moreover, the number of strictly positive exponents bounds above the rank of the second fundamental form.
\end{theorem}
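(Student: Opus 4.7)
The plan is to assemble the theorem from results already in place. Let $E_\bR$ be a flat $\bR$-irreducible piece of the Kontsevich-Zorich cocycle, $G$ the Zariski closure of its monodromy, and $A\subset G$ a maximal $\bR$-split torus. Write $2k$ for the number of zero Lyapunov exponents on $E_\bR$ and $m$ for the multiplicity with which zero appears in the $A$-weight decomposition of $E_\bR$. By \autoref{thm:gps_alg_hull} the pair $(\frakg, \rho)$ lies in the explicit short list classified there, so the strategy reduces to establishing $m=2k$ in general and then reading off $m$ case by case.

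The equality $m=2k$ arises from matching upper and lower bounds. On the one hand, \autoref{cor:zero_is_rk} gives $2k\geq m$ directly from the geometric Oseledets theorem. On the other hand, if $k\geq 1$, the heart of the paper (\autoref{sec:refinement}) produces the $g_t$-invariant refinement $E^{>0}\oplus E^{\geq 0}_{F^1}$ of the Oseledets filtration, and \autoref{corollary:Zariski} feeds the Lagrangian $L=E^{>0}\oplus E^{\geq 0}_{F^1}$ with its isotropic real subspace $L_0=E^{>0}$ into \autoref{prop:real_rk} to yield $m\geq 2k$; the case $k=0$ needs no further input. With $m=2k$ established, the restricted-weight tables of \autoref{sec:examples} complete the count: \autoref{eg:su_pq} yields $m=2(p-q)$ for $\su_{p,q}$ standard and $m=2\bigl[\binom{p-1}{k-2}+\binom{p-1}{k}\bigr]$ for $\su_{p,1}$ in its $k$-th exterior power (where \autoref{thm:gps_alg_hull} forces $q=1$ in the exterior-power case); \autoref{eg:orthogonal} yields $m=0$ for $\fraksp_{2g}$ standard and for either spin representation of $\so_{2n-1,2}$ and $\so_{2n-2,2}$; and \autoref{eg:so*} yields $m=4$ for $\so^*_{2n}$ standard when $n$ is odd, and $m=0$ when $n$ is even.

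The remaining assertion about the second fundamental form follows from \autoref{prop:rk_sigma} in the $\SU_{p,q}$-standard and $\SO^*_{2n}$-standard cases, where its bound on $\operatorname{rank}\sigma$ matches the number of strictly positive exponents; from \autoref{prop:su_p1_details}(v) in the $\su_{p,1}$ exterior-power case; and trivially in the symplectic and spin cases, since there $\dim_\bC F^1$ equals the number of positive exponents and already bounds $\operatorname{rank}(\sigma\colon F^1\to E_\bC/F^1)$. The only genuinely nontrivial step is the inequality $m\geq 2k$, which is precisely what the refinement argument of \autoref{sec:refinement} is engineered to produce; once it is in hand, the remainder of the proof is bookkeeping with the restricted-weight tables.
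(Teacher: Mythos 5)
Your proposal is correct and follows essentially the same route as the paper's proof: the upper bound on zero exponents comes from \autoref{prop:real_rk} via the refinement of \autoref{sec:refinement} and \autoref{corollary:Zariski}, the lower bound from \autoref{cor:zero_is_rk}, and the case-by-case counts from \autoref{eg:su_pq}, \autoref{eg:orthogonal}, \autoref{eg:so*}, with the second fundamental form bound from \autoref{prop:rk_sigma} and \autoref{prop:su_p1_details}(v). The only (welcome) addition beyond the paper's write-up is your explicit remark that the symplectic and spin cases are trivial for the rank bound, which the paper leaves implicit.
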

\begin{proof}
 By \autoref{prop:real_rk}, the maximal split abelian subalgebra of $G$ acts trivially on a space of dimension at least the number of zero exponents.
 By \autoref{cor:zero_is_rk}, the reverse inequality is also true.
 Namely, \emph{any} cocycle will have at least as many zero exponents as the dimension of the zero restricted weight space.
 
 The classification from \autoref{sec:alg_hull_classification} gives the possibilities that lead to zero exponents.
 Namely, the only possible non-compact factors of $G$ are $\Sp_{2n}$, $\SU_{p,q}$, $\SO_{n,2}$ and $\SO_{2n}^*$.
 
 The analysis done in \autoref{eg:orthogonal} shows that the possible representations of $\SO_{n,2}$ do not have zero weights in the restricted weight spaces decomposition.
 Since $\Sp_{2n}$ can only occur in the standard representation, the same is true for it.
 
 By \autoref{eg:su_pq}, the possible representations of $\SU_{p,q}$ have zero weights in the restricted root space decomposition.
 Moreover, the number of zero exponents is sandwiched between the dimension of the zero restricted weight space. 
 
 The rank of the second fundamental form is bounded above by the number of strictly positive exponents; for exterior power representations of $\SU_{p,1}$ this is \autoref{prop:su_p1_details}(v), for $\SU_{p,q}$ and $\SO^*_{2n}$ this is \autoref{prop:rk_sigma}.
\end{proof}

\subsection{Further remarks}

The second part of the conjecture of Forni, Matheus, and Zorich \cite{FMZ_zero} asks when is the Lyapunov spectrum simple.
Note that if any of the groups $\SO_{n,2}$, $\SO_{2n}^*$, or $\SU_{p,1}$ in an exterior power representation would occur, this would not be the case.
Until recently, all known examples were either of type $\Sp_{2n}$ or $\SU_{p,q}$ in the standard representations.

Recent joint work with Forni and Matheus \cite{FFM_Quaternions} exhibits the first examples of monodromy in the group $\SO^*_{6}$, in the standard representation.
This group coincides with $\SU_{3,1}$ in the second exterior power representation, but the methods developed should in principle lead to monodromy in $\SO^*_{2n}$ for any $n$.

\begin{question}
 Is it possible that some orthogonal group in the spin representation, or $\SU_{p,q}$ in a higher exterior power representation, occur in the algebraic hull of the Kontsevich-Zorich cocycle?
\end{question}

Note that for families of abelian varieties such examples exist.
See for example the constructions of Satake \cite[Section 9]{Satake_examples}.

Given a family of polarized abelian varieties realizing some monodromy group, one can also obtain a family of curves with the same properties.
Indeed, since the family of abelian varieties is polarized, it carries a relatively ample line bundle.
Taking complete interesections gives a generic family of curves on these abelian varieties.
The Jacobians of these curves will then have the abelian varieties as factors.
It seems unlikely that a family arising in \Teichmuller dynamics can be constructed this way.

\paragraph{Rank of the fundamental form.}
Forni, Matheus, and Zorich ask if the rank of the second fundamental form equals the number of strictly positive exponents.
\autoref{thm:KZ_zero} only shows that the number of positive exponents bounds from above the rank of the second fundamental form.

In some situations, this can be used to show that there are no zero exponents.
For example, when Forni's geometric criterion \cite[Thm. 4]{Forni_geometric} applies, using the results of \cite[Sect. 4]{Forni_geometric} one gets lower bounds on the rank of the second fundamental form.
In turn, this gives lower bounds on the number of strictly positive exponents.

%





\bibliographystyle{sfilip}
\bibliography{zero_exponents}
\end{document}